


\NeedsTeXFormat{LaTeX2e}
\documentclass{balanced}
\usepackage{graphicx}
\usepackage{pstricks}
\usepackage{amscd}
\usepackage{amsmath}
\usepackage{amsxtra}

\usepackage{tikz}
\usetikzlibrary{arrows}
\usetikzlibrary{calc}
\usepackage{tikz-cd}




\usepackage{amsfonts,amssymb,amsmath}
\usepackage{url}
\usepackage{enumerate}

\setlength{\headheight}{1.2\headheight}
\newcommand{\strutstretchdef}{\newcommand{\strutstretch}{\vphantom{\raisebox{1pt}{$\big($}\raisebox{-1pt}{$\big($}}}}
\newtheorem{theorem}{Theorem}[section]
\newtheorem{lemma}[theorem]{Lemma}
\newtheorem{proposition}[theorem]{Proposition}
\newtheorem{corollary}[theorem]{Corollary}

\newtheorem{definition}[theorem]{Definition}
\newtheorem{problem}[theorem]{Problem}

\newtheorem{remark}[theorem]{Remark}

\numberwithin{equation}{section}

\newlength{\struh}
\setlength{\struh}{16pt}
\newlength{\textminustop}
\setlength{\textminustop}{\textheight}
\addtolength{\textminustop}{-\topskip}

\strutstretchdef
\hyphenation{arc-length}




\newcommand{\mf}{\mathfrak}

\newcommand*{\slam}{S_{\lambdab}}
\newcommand*{\slamc}[1]{S_{\lambdab_{\mf #1}}}
\newcommand*{\sib}[1]{\mathsf{sib}(#1)}

\newcommand*{\sibi}[2]{{\mathsf{sib}}_{#1}(#2)}

\newcommand*{\child}[1]{\mathsf{Chi}(#1)}

\newcommand*{\childnt}[2]{{\mathsf{Chi}}^{\ll#1\gg}(#2)}

\newcommand*{\childi}[2]{{\mathsf{Chi}}_{#1}(#2)}

\newcommand*{\parentn}[2]{{\mathsf{par}}^{\langle#1\rangle}(#2)}

\newcommand*{\parenti}[2]{{\mathsf{par}}_{#1}(#2)}

\newcommand*{\Ge}{\geqslant}
\newcommand*{\lambdab}{\boldsymbol\lambda}

\newcommand*{\Le}{\leqslant}
\newcommand*{\parent}[1]{\mathsf{par}(#1)}

\newcommand*{\rootb}{{\mathsf{root}}}

\setlength\columnsep{14pt}
\columnseprule=1pt





\usepackage{mathrsfs}
\usepackage{epsfig}
\usepackage{ifthen}

\newcommand{\ncom}{\newcommand}
\ncom{\bq}{\begin{equation}}
\ncom{\eq}{\end{equation}}
\ncom{\beqn}{\begin{eqnarray*}}
\ncom{\eeqn}{\end{eqnarray*}}
\ncom{\beq}{\begin{eqnarray}}
\ncom{\eeq}{\end{eqnarray}}
\ncom{\nno}{\nonumber}
\ncom{\rar}{\rightarrow}
\ncom{\Rar}{\Rightarrow}
\ncom{\noin}{\noindent}
\ncom{\bc}{\begin{centre}}
\ncom{\ec}{\end{centre}}
\ncom{\sz}{\scriptsize}
\ncom{\rf}{\ref}
\ncom{\sgm}{\sigma}
\ncom{\Sgm}{\Sigma}
\ncom{\dt}{\delta}
\ncom{\Dt}{Delta}
\ncom{\lmd}{\lambda}
\ncom{\Lmd}{\Lambda}
\ncom{\eps}{\epsilon}
\ncom{\pcc}{\stackrel{P}{>}}
\ncom{\dist}{{\rm\,dist}}
\ncom{\sspan}{{\rm\,span}}
\ncom{\im}{{\rm Im\,}}
\ncom{\sgn}{{\rm sgn\,}}
\ncom{\ba}{\begin{array}}
\ncom{\ea}{\end{array}}
\ncom{\eop}{\hfill{{\rule{2.5mm}{2.5mm}}}}
\ncom{\eoe}{\hfill{{\rule{1.5mm}{1.5mm}}}}
\ncom{\eof}{\hfill{{\rule{1.5mm}{1.5mm}}}}
\ncom{\hone}{\mbox{\hspace{1em}}}
\ncom{\htwo}{\mbox{\hspace{2em}}}
\ncom{\hthree}{\mbox{\hspace{3em}}}
\ncom{\hfour}{\mbox{\hspace{4em}}}
\ncom{\hsev}{\mbox{\hspace{7em}}}
\ncom{\vone}{\vskip 2ex}
\ncom{\vtwo}{\vskip 4ex}
\ncom{\vonee}{\vskip 1.5ex}
\ncom{\vthree}{\vskip 6ex}
\ncom{\vfour}{\vspace*{8ex}}
\ncom{\norm}{\|\;\;\|}
\ncom{\integ}[4]{\int_{#1}^{#2}\,{#3}\,d{#4}}
\ncom{\inp}[2]{\langle{#1},\,{#2} \rangle}
\ncom{\Inp}[2]{\Langle{#1},\,{#2} \Langle}
\ncom{\vspan}[1]{{{\rm\,span}\#1 \}}}
\ncom{\dm}[1]{\displaystyle {#1}}





\def \dep{\mathsf{d}}

\title[Classification of Drury-Arveson-type Hilbert modules]{Classification of Drury-Arveson-type Hilbert modules \\ associated with certain directed graphs}

\author[S. Chavan,  D. K. Pradhan \and S. Trivedi]{Sameer Chavan,  Deepak Kumar Pradhan \and Shailesh Trivedi
}




\extraline{The research of the second author was supported by the NBHM Research Fellowship, while the research of the third author was supported by the National Post-doctoral Fellowship (Ref. No. PDF/2016/001681), SERB.}


\classno{Primary 46E22, 32A25, 32A36; Secondary 47A13, 05C20, 28B20}

\begin{document}

\maketitle

\begin{abstract} Given a directed Cartesian product $\mathscr T$ of locally finite, leafless, rooted directed trees $\mathscr T_1, \ldots, \mathscr T_d$
of finite joint branching index, 
one may associate with $\mathscr T$ the Drury-Arveson-type $\mathbb C[z_1, \ldots, z_d]$-Hilbert module $\mathscr H_{\mathfrak c_a}(\mathscr T)$ of vector-valued holomorphic functions on the open unit ball $\mathbb B^d$ in $\mathbb C^d$,  where $a >0.$  In case all directed trees under consideration are without branching vertices, $\mathscr H_{\mathfrak c_a}(\mathscr T)$ turns out to be the classical Drury-Arveson-type Hilbert module $\mathscr H_{a}$ associated with the reproducing kernel $\frac{1}{(1 - \langle{z}, {w}\rangle)^a}$ defined on $\mathbb B^d$. 
Unlike the case of $d=1$, the above association does not yield a reproducing kernel Hilbert module if we relax the assumption that $\mathscr T$ has finite joint branching index.
The main result of this paper classifies all directed Cartesian product $\mathscr T$ for which the Hilbert modules $\mathscr H_{\mathfrak c_a}(\mathscr T)$ are isomorphic in case $a$ is a positive integer.  One of the essential tools used to establish this isomorphism is an operator-valued representing measure arising from $\mathscr H_{\mathfrak c_a}(\mathscr T).$ Further, a careful analysis of these Hilbert modules allows us to prove that the cardinality of the $k^{\tiny \mbox{th}}$ generation $(k =0, 1, \ldots)$ of $\mathscr T_1, \ldots, \mathscr T_d$ are complete invariants for $\mathscr H_{\mathfrak c_a}(\cdot)$ provided $ad \neq 1$. 
Failure of this result in case $ad =1$ may be attributed to the von Neumann-Wold decomposition for isometries. 
 Along the way, we identify the joint cokernel $E$ of the multiplication $d$-tuple $\mathscr M_{z}$ on $\mathscr H_{\mathfrak c_a}(\mathscr T)$ with orthogonal direct sum of tensor products of certain hyperplanes. In case $\mathscr T$ has finite joint branching index, this readily yields a neat formula for the dimension of $E$. 
\end{abstract}




\section{A classification problem}

In \cite{CPT}, we introduced and studied the notion of multishifts on the directed Cartesian product  of finitely many leafless, rooted directed trees. This was indeed an attempt to unify the theory of weighted shifts on rooted directed trees \cite{JJS} and that of classical unilateral multishifts \cite{JL}. Besides a finer 
analysis of various joint spectra and wandering subspace property of these multishifts, this work provided a scheme to associate a one parameter family of reproducing kernel Hilbert spaces $\mathscr H_{\mf c_a}(\mathscr T)~(a > 0)$ with every directed Cartesian product $\mathscr T$ of finite joint branching index,  see Corollary \ref{S-c-a-kernel-0} below (cf. \cite[Proposition 4.4]{AV} and \cite[Definition 4.1]{N}). 
These spaces consist of vector-valued holomorphic functions defined on the unit ball $\mathbb B^d$ in $\mathbb C^d$, and can be thought of as
{\it tree analogs} of the reproducing kernel Hilbert spaces $\mathscr H_{a}$ associated with the positive definite kernels \beq \label{c-D-A-k} \kappa_a(z, w):=\frac{1}{(1-\inp{z}{{w}})^a}, \quad z, w \in \mathbb B^d \eeq (refer to \cite{Dr} and \cite{Ar}; refer also to \cite{Z} for a comprehensive account of the theory of Hilbert spaces of holomorphic functions on the unit ball). 
Indeed, the reproducing kernels $\kappa_{\mathscr H_{\mf c_a}}(z, w)$ associated with $\mathscr H_{\mf c_a}(\mathscr T)$ are certain positive operator linear combinations of $\kappa_a(z, w)$ and multivariable hypergeometric functions (see \cite[Theorem 5.2.6]{CPT}). In particular, the Hilbert space $\mathscr H_a$ is {\it contractively contained} in $\mathscr H_{\mf c_a}(\mathscr T)$ (see \cite[Theorem 5.1]{PR}, cf. \cite[Proposition 4.5(1)]{AV}).
It is interesting to note that $\kappa_{\mathscr H_{\mf c_a}}(z, w)$ can be obtained by integrating certain perturbations of $\kappa_a(z, w)$ with respect to a finite family of spectral measures (see Remark \ref{int-s-k}).
Further, the Hilbert space $\mathscr H_{\mf c_a}(\mathscr T)$ carries a natural {\it Hilbert module} structure over the polynomial ring $\mathbb C[z_1, \ldots, z_d]$ with module action 
\beq \label{HM}
(p, h) \in \mathbb C[z_1, \cdots, z_d] \times \mathscr H_{\mf c_a}(\mathscr T) \longmapsto p(\mathscr M_{z})h \in \mathscr H_{\mf c_a}(\mathscr T),
\eeq
where $\mathbb C[z_1, \ldots, z_d]$ denotes the ring of polynomials in the complex variables $z_1, \ldots, z_d$ and $\mathscr M_z$ is the $d$-tuple of multiplication operators $\mathscr M_{z_1}, \ldots, \mathscr M_{z_d}$ acting on $\mathscr H_{\mf c_a}(\mathscr T)$ (refer to \cite[Section 2]{Sa} for the general theory of Hilbert modules over the algebra of polynomials).
We refer to $\mathscr H_{\mf c_a}(\mathscr T)$ as the {\it Drury-Arveson-type Hilbert module} associated with $\mathscr T$.

A thorough study of the Hilbert modules $\mathscr H_{\mf c_a}(\mathscr T)$ had been carried out in \cite[Chapter 5, Section 2]{CPT}. In particular, the essential normality of $\mathscr H_{\mf c_a}(\mathscr T)$ is shown to be closely related to the notion of finite joint branching index of $\mathscr T$ (see \cite[Proposition 5.2.9 and Example 5.2.20]{CPT}). 
In the present work, we continue our study of the Drury-Arveson-type Hilbert modules $\mathscr H_{\mf c_a}(\mathscr T)$. 
The investigations herein are motivated by the following classification problem for the Hilbert modules $\mathscr H_{\mf c_a}(\mathscr T)$ associated with the directed Cartesian product $\mathscr T$ (see \cite[Corollary 9.10]{ACJS}, \cite[Theorem 2.4]{CPT-1} for variants of this problem; see also \cite[Theorems 9.8 and 9.9]{ACJS}).  

\begin{problem} \label{prob}
For $j=1,2$, let $\mathscr T^{(j)}= (V^{(j)}, 
\mathcal{E}^{(j)})$ denote the directed Cartesian product of 
locally finite, leafless, rooted directed trees $\mathscr T^{(j)}_1, \ldots, \mathscr T^{(j)}_d$
of finite joint branching index. 
Under what conditions on $\mathscr T^{(1)}$ and $\mathscr T^{(2)}$, the Drury-Arveson-type Hilbert modules 
$\mathscr{H}_{\mf c_a}(\mathscr T^{(1)})$ and $\mathscr{H}_{\mf c_a}(\mathscr T^{(2)})$ are isomorphic\,? 
\end{problem}
Recall that the Hilbert modules $\mathscr{H}_{\mf c_a}(\mathscr T^{(1)})$ and $\mathscr{H}_{\mf c_a}(\mathscr T^{(2)})$ are {\it isomorphic} if there exists a unitary map $U : \mathscr{H}_{\mf c_a}(\mathscr T^{(1)}) \rar  \mathscr{H}_{\mf c_a}(\mathscr T^{(2)})$ such that $$U 
\mathscr{M}^{(1)}_{z_k} = \mathscr{M}^{(2)}_{z_k}U, \quad k=1, \ldots, d,$$
where $\mathscr M^{(j)}_{z_k}$ denotes the operator of multiplication by the coordinate function $z_k$ on $\mathscr{H}_{\mf c_a}(\mathscr T^{(j)})$ for $j=1,2$.
We refer to $U$ as a {\it Hilbert module isomorphism} between
 $\mathscr{H}_{\mf c_a}(\mathscr T^{(1)})$ and $\mathscr{H}_{\mf c_a}(\mathscr T^{(2)})$.

It turns out that for graph-isomorphic directed Cartesian products, the associated Drury-Arveson-type Hilbert modules are always isomorphic (see Remark \ref{graph-iso}). However, given any positive integer $k$, one can produce $k$ number of non-isomorphic directed Cartesian products for which the associated Drury-Arveson-type Hilbert modules are isomorphic (see Corollary \ref{m-non-iso}(ii)). Thus graph-isomorphism of directed Cartesian products is sufficient but not necessary to ensure the isomorphism of the associated Drury-Arveson-type Hilbert modules. This is in contrast with \cite[Theorem 2.11]{KK}, where countable directed graphs completely determine the associated tensor (quiver) algebras (up to Banach space isomorphism) (cf. \cite[Theorem 3.7]{S}).

The main result of this paper answers when two Drury-Arveson-type Hilbert modules $\mathscr H_{\mf c_a}(\mathscr T^{(j)})~(j=1, 2)$ are isomorphic in case $a$ is a positive integer (see Theorem \ref{main-thm} and Remark \ref{rmk-vW}). In particular, it provides complete unitary invariants for the Drury-Arveson-type Hilbert modules $\mathscr H_{\mf c_a}(\mathscr T)$ in terms of some discrete data associated with $\mathscr T$.  
Before we state this result, we need to reproduce several notions from \cite{JJS} and \cite{CPT} (the reader is advised to recall all the relevant definitions pertaining to the directed trees from \cite{JJS}).

\subsection{Multishifts on directed Cartesian product of directed trees}

We first set some standard notations.  
For a positive integer $d$ and a set $X$,  $X^d$ stands for the $d$-fold Cartesian product of $X$, while $\mbox{card}(X)$ stands the cardinality of $X$. 
The symbol ${\mathbb N}$ denotes the set of nonnegative
integers, and $\mathbb C$ denotes the field of complex numbers.  
For $\alpha =
(\alpha_1, \ldots, \alpha_d) \in {\mathbb{N}}^d,$ 
we use $\alpha!:=\prod_{j=1}^d \alpha_j!$ and $|\alpha|:=\sum_{j=1}^d
\alpha_j$. The modulus of a complex number $z$ is denoted by $|z|$. The complex conjugate of $z =(z_1, \ldots, z_d) \in \mathbb C^d$ is given by $\overline{z}:=(\overline{z}_1, \ldots, \overline{z}_d)$, while the Euclidean norm $(|z_1|^2 + \cdots + |z_d|^2)^{1/2}$ of $z$ is denoted by $\|z\|_2.$
The open ball in $\mathbb C^d$ centered at the origin and of
radius $r > 0$ is denoted by $\mathbb B^d_r$,
while the sphere 
centered at the origin and of radius $r > 0$ is denoted by $\partial
\mathbb B^d_r.$ 
For simplicity, the unit ball $\mathbb B^d_1$ and the
unit sphere $\partial \mathbb B^d_1$ are denoted respectively by
$\mathbb B^d$ and $\partial \mathbb B^d.$ 
Throughout this paper, we follow the standard conventions that the sum over the empty set is $0$, while the product over the empty set is always $1.$ 
 
For $j=1, \ldots, d,$ let $\mathscr T_j = (V_j, \mathcal E_j)$ be a leafless, rooted directed tree with root $\rootb_j$. 
The {\it directed Cartesian product} of $\mathscr T_1, \ldots, \mathscr T_d$ is the directed graph $\mathscr T = (V,\mathcal E)$ given by
\beqn
V := V_1 \times \cdots \times V_d, \eeqn
\vskip-.7cm
\beqn
\mathcal E := \Big\{(v, w) \in V \times V: 
~\mbox{there is a positive integer~} k \in \{1, \ldots, d\} \\ \mbox{such that}~ v_j = w_j~ \mbox{for}~ j \neq k~\mbox{and the edge}~ (v_k, w_k) \in \mathcal E_k \Big\},
\eeqn
where $v \in V$ is always understood as $v = (v_1, \ldots, v_d)$ with $v_j \in V_j,$ $j = 1, \ldots, d$. The $d$-fold directed Cartesian product of a directed tree $\mathscr T$ is denoted by $\mathscr T^d.$
The reader is referred to \cite[Chapter 2]{CPT} for the definitions of $\childi{j}{v}, \child{v}, \childnt{\alpha}{v}$,  $\parenti{j}{v},$ $\mathsf{Par}(v)$, $\sibi{j}{v}$ for a vertex $v \in V$, $\alpha \in \mathbb N^d,$ and $j=1, \ldots, d$. The {\it depth}  of a vertex $v \in V$ is the unique multiindex $\dep_v \in \mathbb N^d$ such that $$v \in \childnt{\dep_v}{\rootb},$$ where $\rootb$ denotes the root $(\rootb_1, \ldots, \rootb_d)$ of $\mathscr T.$ 
The depth of a vertex always exists (see \cite[Lemma 2.1.10(vi)]{CPT}).
For $k \in \mathbb N$, the set $$\mathcal G_k := \{v \in V : |\dep_v| = k\}$$ is referred to as the {\it $k^{\tiny th}$ generation} of $\mathscr T.$ 
A vertex $v \in V$ is called a {\it branching vertex} of $\mathscr T$ if $\text{card}(\child {v_j})$ $\geqslant  2$ for all $j=1, \ldots, d$.
The {\it branching index} $k_{\mathscr T}$ of $\mathscr T$ is the multiindex $(k_{\mathscr T_1}, \ldots, k_{\mathscr T_d}) \in \mathbb N^d$ given by
\[k_{\mathscr T_j}:=\begin{cases}
 1+\sup \big\{\dep_w : w\in V^{(j)}_{\prec}\big\}& \text{if $V^{(j)}_{\prec}$ is non-empty},\\
 0 & \text{otherwise},
\end{cases}
\]
where $V^{(j)}_{\prec}$ is the set of branching 
vertices of $\mathscr T_j$, $j=1, \ldots, d$. 
It is recorded in \cite[Proposition 2.1.19]{CPT} that
$$\childnt{k_{\mathscr T}}{V_\prec} \cap V_\prec = \emptyset,$$
where $V_{\prec}$ denotes the set of branching vertices of $\mathscr T.$

Let $\mathscr T= (V, 
\mathcal{E})$ be the directed Cartesian product of 
rooted directed trees $\mathscr T_1, \ldots, \mathscr T_d$ and let $V^{\circ}:=V \setminus \{\rootb\}$.
Consider the complex Hilbert space $l^2(V)$ of square summable complex functions on $V$
\index{$l^2(V)$}
equipped with the standard inner product. Note that $l^2(V)$ admits the orthonormal basis $\{e_v : v \in V\}$, 
where $e_v : V \rar \mathbb C$ denotes the indicator function of the set $\{v\},$ $v \in V$.
Given a system
$\lambdab=\{\lambda_j(v) : v \in V^{\circ}, ~ j=1, \ldots, d\}$ of positive numbers, we define
the {\it multishift $S_{\lambdab}$ on $\mathscr T$} 
with weights $\lambdab$ as the $d$-tuple of linear (possibly unbounded) operators  
$S_1, \ldots, S_d$ in $l^2(V)$ given by 
   \begin{align*}
   \begin{aligned}
{\mathscr D}(S_{j}) & := \{f \in l^2(V) \colon
\varLambda^{(j)}_{\mathscr T} f \in l^2(V)\},
   \\
S_{j} f & := \varLambda^{(j)}_{\mathscr T} f, \quad f \in {\mathscr
D}(S_{j}),
   \end{aligned}
   \end{align*}
where $\varLambda^{(j)}_{\mathscr T}$ is the mapping defined on
complex functions $f$ on $V$ by
   \begin{align*}
(\varLambda^{(j)}_{\mathscr T} f) (v) :=
   \begin{cases}
\lambda_j(v)  \cdot f\big(\parenti{j}{v}\big) & \text{if } v_j \in
V^\circ_j,
   \\
   0 & \text{otherwise}.
   \end{cases}
   \end{align*}
It is shown in \cite[Lemma 3.1.5]{CPT} that $S_j \in B(l^2(V))$ if and only if
\beq
\label{bdd}
\sup_{v \in V} \sum_{w \in \childi{j}{v}}  \lambda_j(w)^2 < \infty,
\eeq
where $B(\mathcal H)$ denotes the space of bounded linear operators on the Hilbert space $\mathcal H$.
Further, an examination of the proof of \cite[Proposition 3.1.7]{CPT} reveals that  for $1 \Le i, j \Le d,$ $S_iS_j=S_jS_i$ if and only if 
\beq \label{commuting} \lambda_j(u) \lambda_i(\parenti{j}{u})=\lambda_i(u) \lambda_j(\parenti{i}{u}), \quad u \in \mathsf{Chi}_j\childi{i}{v}, ~v \in V.\eeq
We say that $\slam$ is a {\it commuting multishift on} $\mathscr T$ if $\lambdab$ satisfies \eqref{bdd} and \eqref{commuting} for all $i, j=1, \ldots, d$. 

\begin{assumption*}
All the directed trees under consideration are countably infinite and leafless, that is, the cardinality of set of vertices is $\aleph_0$ and for every vertex $u$, $\mbox{card}(\child{u}) \Ge 1.$  
\end{assumption*}

For future reference, we reproduce from \cite[Proposition 3.1.7]{CPT} some general properties of commuting multishifts.

\begin{lemma} \label{ortho-v}
Let $\mathscr T= (V, 
\mathcal{E})$ be the directed Cartesian product of 
rooted directed trees $\mathscr T_1, \ldots, \mathscr T_d$ and
let $\slam$ be a commuting multishift on $\mathscr T.$
Then, for any $\alpha \in \mathbb N^d,$  the following statements hold:
\begin{enumerate}
\item[(i)] $S^{*\alpha}_{\lambdab} S^{\alpha}_{\lambdab}$ is a diagonal operator $($with respect to the orthonormal basis $\{e_v\}_{v \in V})$ with diagonal entries $\|\slam^{\alpha}e_v\|^2,$ $v \in V$.
\item[(ii)]
For distinct vertices $v, w \in V,$ $\inp{S^{\alpha}_{\lambdab}e_v}{S^{\alpha}_{\lambdab}e_w}=0.$
\end{enumerate}
\end{lemma}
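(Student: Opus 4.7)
The plan is to first describe the action of $\slam^{\alpha}$ on a basis vector $e_v$ in closed form, and then deduce both (i) and (ii) from that description. The definition of $\varLambda^{(j)}_{\mathscr T}$ gives, for any $u \in V$ and $j=1,\ldots,d$,
\[
S_j e_u = \sum_{w \in \childi{j}{u}} \lambda_j(w)\, e_w,
\]
and since each $S_j$ is bounded by \eqref{bdd}, this extends to finite products. Because $\slam$ is commuting, $\slam^{\alpha} = S_1^{\alpha_1}\cdots S_d^{\alpha_d}$ is well defined, and an induction on $\alpha_j$ performed one direction at a time yields
\[
\slam^{\alpha} e_v = \sum_{w \in \childnt{\alpha}{v}} c_{\alpha}(v,w)\, e_w,
\]
where $c_{\alpha}(v,w)$ is an explicit product of the weights $\lambda_j$ along any path from $v$ to $w$ that uses $\alpha_j$ edges in the $j^{\text{th}}$ direction. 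The commutation relation \eqref{commuting} ensures that this product does not depend on the chosen order of edges, so the formula is unambiguous.

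For (ii), I would show that the supports $\childnt{\alpha}{v}$ and $\childnt{\alpha}{v'}$ are disjoint whenever $v \neq v'$. This follows from the fact that in a rooted directed tree each vertex has a unique parent in each coordinate, so iterating component-wise, any $w \in \childnt{\alpha}{v}$ has a unique $\alpha$-fold parent in $V$, necessarily equal to $v$. Hence a common element of the two supports would force $v = v'$, a contradiction. Disjointness of supports immediately yields $\inp{\slam^{\alpha}e_v}{\slam^{\alpha}e_{v'}}=0$. Then (i) follows from (ii): for any $v, w \in V$,
\[
\inp{S^{*\alpha}_{\lambdab} S^{\alpha}_{\lambdab} e_v}{e_w} = \inp{\slam^{\alpha}e_v}{\slam^{\alpha}e_w},
\]
which vanishes for $v \neq w$ by (ii); thus $S^{*\alpha}_{\lambdab} S^{\alpha}_{\lambdab}$ is diagonal with respect to $\{e_v\}_{v \in V}$, with diagonal entry $\|\slam^{\alpha}e_v\|^2$ at $v$.

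The main obstacle I anticipate is the bookkeeping behind the closed-form expression for $\slam^{\alpha}e_v$ and the disjointness of the sets $\childnt{\alpha}{v}$. Both are essentially combinatorial facts about the directed Cartesian product, but one must carefully chain together the single-direction descriptions of $\childi{j}{\cdot}$ and $\parenti{j}{\cdot}$, and use the commutativity of the $S_j$'s (via \eqref{commuting}) to ensure that different orders of applying $S_1^{\alpha_1},\ldots,S_d^{\alpha_d}$ give the same linear combination. Once that combinatorics is in place, the orthogonality and diagonal assertions reduce to a one-line inner product computation.
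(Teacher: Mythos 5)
Your argument is correct. Note that the paper does not actually prove this lemma; it simply quotes it from \cite[Proposition 3.1.7]{CPT}, so there is no internal proof to compare with, but your reasoning is the standard one: the support of $\slam^{\alpha}e_v$ lies in the $\alpha$-fold children of $v$, and uniqueness of parents in each coordinate tree makes these sets disjoint for distinct $v$, giving (ii), from which (i) follows by computing matrix entries. One small remark: for (ii) you do not even need the closed-form coefficient $c_\alpha(v,w)$ or the order-independence supplied by \eqref{commuting}; mere containment of the support of $S_1^{\alpha_1}\cdots S_d^{\alpha_d}e_v$ in $\{w : w_j \in \childn{\alpha_j}{v_j}\ \mbox{for all } j\}$ (which holds for any fixed order of the factors) already yields the disjointness, so that part of your bookkeeping is harmless but dispensable.
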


Let
$\slam=(S_1, \ldots, S_d)$ be a commuting multishift on $\mathscr T$ with weight system $\lambdab$. Assume that $\slam$ is {\it joint left invertible}, that is, $\sum_{j=1}^d S^*_jS_j$ is invertible.
Then the {\it spherical Cauchy dual}  $S^{\mathfrak s}_{\lambdab} = (S_1^{\mathfrak s}, \ldots, S_d^{\mathfrak s})$ of  $S_{\lambdab}$ is given by
$$S_j^{\mf s} e_v = \Big(\sum_{i=1}^d \|S_i e_v\|^2 \Big)^{-1} \sum_{w \in \childi{j}{v}} \lambda_j(w) e_w, \quad v \in V,~j=1, \ldots, d.$$ 
Note that $S^{\mathfrak s}_{\lambdab}$ is the multishift on $\mathscr T$ with weights 
\beq \label{dual-wts}
{\lambda_j(w)}\Big({\sum_{i=1}^d \|S_i e_{v}\|^2}\Big)^{-1}, \quad w \in \childi{j}{v},~v \in V, ~j=1, \ldots, d.\eeq
In general, $S^{\mathfrak s}_{\lambdab}$ is not commuting (see \cite[Proposition 5.2.10]{CPT}).
However, if $S^{\mathfrak s}_{\lambdab}$ is commuting, then it is a joint left invertible commuting multishift such that $(\slam^{\mf s})^{\mf s}=\slam.$ 

\subsection{Joint cokernel of multishifts}

The main result of this paper relies heavily on the description of the joint cokernel $\ker \mathscr M^*_z$ of the multiplication tuple $\mathscr M_z$ acting on the Drury-Arveson-type Hilbert space $\mathscr H_{\mf c_a}(\mathscr T)$. The first step in this direction is to realize $\ker \mathscr M^*_z$ as the solution space of certain systems of linear equations arising from the eigenvalue problem for the adjoint of a commuting multishift. For this realization, we find it necessary to collect required graph-theoretic jargon as introduced in \cite[Chapter 4]{CPT}. 

For a set $A,$ let $\mathscr P(A)$ denote the set of all subsets of $A.$ In the case when $A= \{ 1, \cdots, d\}$, we simply write $\mathscr P$ in place of $\mathscr P(A)$. 
Let $\mathscr T = (V,\mathcal E)$ be the directed Cartesian product of rooted directed trees $\mathscr T_1, \cdots, \mathscr T_d$.
Consider the set-valued function $\Phi : \mathscr P \rar \mathscr P(V)$ given by 
\beq \label{phi-F-eqn}
\Phi(F) = \Phi_F := \big\{v \in V : v_j \in V^{\circ}_j~\mbox{if~}j \in F,~\mbox{and}~v_j =\mathsf{root}_j~\mbox{if}~j \notin F \big\},~ F \in \mathscr P,
\eeq
where $V^{\circ}_j := V_j \setminus \{\rootb_j\},$  $j=1, \ldots, d.$ 
Note that 
\begin{enumerate}
\item[$\bullet$]
if $F \neq G,$ then $\Phi_F \cap \Phi_G = \emptyset$,  
\item[$\bullet$] if $v \in V$, then $v \in \Phi_F$ for $F:=\{j \in \{1, \cdots, d\} : v_j \neq \mathsf{root}_j\}.$ 
\end{enumerate}
Thus it follows that
\beq \label{V-phi-F} V  = \displaystyle \bigsqcup_{F \in \mathscr{P}} \Phi_F~(\mbox{disjoint sum}).\eeq 
For $F \in \mathscr P$ and $u \in \Phi_F$, define \beqn \label{sib}
\mathsf{sib}_F(u) := \begin{cases} \mathsf{sib}_{i_1}  \mathsf{sib}_{i_2} \cdots \mathsf{sib}_{i_k}(u) & \mbox{if~} F= \{i_1, \cdots, i_k \}, \\
\{u\} & \mbox{if~} F = \emptyset.
\end{cases}
\eeqn
Define an equivalence relation $\sim$ on $\Phi_F$ by $$u \sim v ~\mbox{if and only if}~ u \in \mathsf{sib}_F(v),$$
and note that 
for any $u \in \Phi_F,$ the equivalence class containing $u$ is precisely $\mathsf{sib}_F(u)$. 
An application of the axiom of choice allows us to form a set $\Omega_F$ by picking up exactly one element from each of the equivalence classes $\mathsf{sib}_F(u),$ $u \in \Phi_F$. We refer to $\Omega_F$ as an {\it indexing set corresponding to $F$}.
Thus we have the disjoint union \beq \label{phi-F} \Phi_F = \displaystyle \bigsqcup_{u \in \Omega_F} \mathsf{sib}_F(u).\eeq
This combined with \eqref{V-phi-F} yields
the following decomposition of $l^2(V):$
\beq \label{deco}
l^2(V) = \bigoplus_{F  \in \mathscr{P}}\bigoplus_{u \in \Omega_{F}} l^{2}(\mathsf{sib}_{F}(u)).
\eeq 
For $F \in \mathscr P$ and $v = (v_1,\cdots, v_d) \in V$, let $v_F \in V$ denote the $d$-tuple with $j^{\tiny \mbox{th}}$ coordinate,  $1 \Le j \Le d,$ given by $$ (v_F)_j= \begin{cases} 
v_j & \mbox{~if~} j \in F,  \\
\mathsf{root}_j & \mbox{~if~} j \notin F. \end{cases}$$ 
Further, for $i=1, \ldots, d$ such that $i \notin F$ and $u_i \in V_i$, we
define $v_F|u_i \in V$ to be the $d$-tuple $(w_1, \cdots, w_d)$, where  
\beqn
w_j = \begin{cases} 
u_i & \mbox{~if~} j=i,  \\
(v_F)_j & \mbox{~otherwise}. \end{cases}
\eeqn
For $F, G \in \mathscr P$ such that $G \subseteq F$ and $u \in \Phi_F,$
define 
\beqn \label{sib-F-G-u}
\mathsf{sib}_{F, G}(u):=\{v_G : v \in \mathsf{sib}_F(u)\}. 
\eeqn
In view of \eqref{deco}, it can be deduced from \cite[Lemma 4.1.6]{CPT} that the joint kernel  $E:=\bigcap_{j=1}^d\ker S^*_j$ of $S_{\lambdab}^*$ is given by 
\beq \label{j-kernel-1}
E = [e_\rootb] \oplus \bigoplus_{\underset{F \neq \emptyset}{F  \in \mathscr{P}}} \bigoplus_{u \in \Omega_{F}} \mathcal L_{u, F},
\eeq 
where  
$\mathcal L_{u, F} \subseteq l^2(\mathsf{sib}_F (u)) $ is the solution space of the following system of equations

\beq \label{system-main} 
{\sum_{w \in \mathsf{sib}_j(v_G | u_j)} f(w) \lambda_{j}(w) =0,\quad j \in F, ~\ v_G \in \mathsf{sib}_{F, G}(u)} \mbox{ and }  G = F \setminus \{ j \}
\eeq 
(see the discussion following \cite[Lemma 4.1.6]{CPT} for more details).
The number of variables $M_{u, F}$ and number of equations $N_{u, F}$ in the above system are given by 
\beq
\label{MuF-NuF}
M_{u, F}=\mbox{card} 
(\mathsf{sib}_{F}(u)) = \prod_{j \in F} \mbox{card} (\mathsf{sib}(u_j)),
\quad
N_{u, F}=\sum_{j \in F } \prod_{\underset{i \neq j}{i \in F}}\mbox{card}(\mathsf{sib}_i
(u)).
\eeq 
In particular, $\mathcal L_{u, F}$ is finite dimensional whenever the directed trees $\mathscr T_1, \ldots, \mathscr T_d$ are locally finite. 
Indeed, $M_{u, F}$ and $N_{u, F}$ are finite in this case.

We present the following useful lemma for future reference.
\begin{lemma} \label{lem-E-finite}
Let $\mathscr T= (V, 
\mathcal{E})$ be the directed Cartesian product of 
locally finite, rooted directed trees $\mathscr T_1, \ldots, \mathscr T_d$  and let
$\slam$ be a commuting multishift on $\mathscr T$.
Then the joint kernel  $E$ of $S_{\lambdab}^*$ is given by 
\beq \label{j-kernel-2} 
E = \bigoplus_{{F  \in \mathscr{P}}} \bigoplus_{u \in \Omega_{F}} \mathcal L_{u, F},
\eeq 
where  
$\mathcal L_{u, F} \subseteq l^2(\mathsf{sib}_F (u)) $ is the solution space of the system \eqref{system-main}.
Moreover, if $\mathscr T$ is of finite joint branching index, then 
\begin{enumerate}
\item[(i)] for any $F \in \mathscr P$,  $\mathcal L_{u, F} \neq \{0\}$ for at most finitely many $u \in \Omega_F$, and 
\item[(ii)] $E$ is finite dimensional.
\end{enumerate}
\end{lemma}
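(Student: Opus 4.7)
The plan is to derive the unified formula \eqref{j-kernel-2} from the already-established formula \eqref{j-kernel-1} by absorbing the $[e_\rootb]$ summand into the $F = \emptyset$ case, and then to exploit the finite joint branching index hypothesis together with local finiteness to bound the number of non-zero summands.

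For the first assertion, I would unpack the definitions at $F = \emptyset$: by \eqref{phi-F-eqn}, $\Phi_\emptyset = \{\rootb\}$, whence $\Omega_\emptyset = \{\rootb\}$ and $\mathsf{sib}_\emptyset(\rootb) = \{\rootb\}$. Since the system \eqref{system-main} is indexed by $j \in F$, it is vacuous when $F = \emptyset$, so $\mathcal L_{\rootb, \emptyset} = l^2(\{\rootb\}) = [e_\rootb]$. Plugging $F = \emptyset$ into the right-hand side of \eqref{j-kernel-2} thus contributes exactly the $[e_\rootb]$ summand of \eqref{j-kernel-1}, and the remaining terms match verbatim.

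For part (i), the key observation is that whenever some coordinate $u_j$ with $j \in F$ satisfies $\mathsf{sib}(u_j) = \{u_j\}$, the $j$-subsystem of \eqref{system-main} alone forces $\mathcal L_{u,F} = \{0\}$. Indeed, for each $v_G \in \mathsf{sib}_{F, G}(u)$ with $G = F \setminus \{j\}$, the sum in \eqref{system-main} collapses to the single term $f(v_G|u_j)\lambda_j(v_G|u_j) = 0$; and the assumption $\mathsf{sib}(u_j) = \{u_j\}$ forces $w_j = u_j$ for every $w \in \mathsf{sib}_F(u)$, so $\{v_G|u_j : v_G \in \mathsf{sib}_{F,G}(u)\}$ coincides with $\mathsf{sib}_F(u)$ and $f$ vanishes identically on $\mathsf{sib}_F(u)$. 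Consequently $\mathcal L_{u,F} \neq \{0\}$ forces $\mathsf{par}(u_j) \in V^{(j)}_\prec$ for every $j \in F$. The finite joint branching index hypothesis bounds the depths of the branching vertices of each $\mathscr T_j$ by $k_{\mathscr T_j} - 1$, and local finiteness then gives $\mbox{card}(V^{(j)}_\prec) < \infty$; hence for each $j \in F$ the coordinate $u_j$ lies in the finite set $\bigcup_{w \in V^{(j)}_\prec} \mathsf{Chi}(w)$, pinning $u$ to a finite subset of $\Phi_F$ and in particular of $\Omega_F$.

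Part (ii) is then immediate: $\mathscr P$ has cardinality $2^d$; each surviving summand $\mathcal L_{u,F}$ is finite dimensional by the remark preceding the lemma (as $M_{u,F} < \infty$ under local finiteness); and by (i) only finitely many summands are non-zero. The main technical point, and the only non-trivial step, is the collapsing observation in part (i); once it is in hand, the finite joint branching index hypothesis plugs in cleanly and the rest reduces to combinatorial bookkeeping.
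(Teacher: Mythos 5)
Your proof is correct, but it runs in the opposite logical direction from the paper's. The paper handles the first assertion exactly as you do (absorbing $[e_{\rootb}]$ into the $F=\emptyset$ summand, since $\Omega_\emptyset=\{\rootb\}$ and \eqref{system-main} is vacuous there), but then it obtains (ii) at once by invoking \cite[Corollary 3.1.14]{CPT}, and derives (i) as a trivial consequence of (ii) together with the orthogonal decomposition \eqref{j-kernel-2}. You instead prove (i) directly: your collapsing observation --- that if $\mathsf{sib}(u_j)=\{u_j\}$ for some $j\in F$ then, since the weights are strictly positive and $\mathsf{sib}_F(u)=\bigsqcup_{v_G}\mathsf{sib}_j(v_G|u_j)$ (this is exactly \eqref{star}), the $j$-subsystem of \eqref{system-main} forces $f\equiv 0$ on $\mathsf{sib}_F(u)$ --- shows that a nonzero $\mathcal L_{u,F}$ requires $\parent{u_j}\in V^{(j)}_\prec$ for every $j\in F$, and the finite joint branching index plus local finiteness (which makes every generation of each $\mathscr T_j$ finite, hence $V^{(j)}_\prec$ finite) pins $u$ to a finite set; (ii) then follows from finiteness of $\mathscr P$ and of each $M_{u,F}$. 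Your route buys a self-contained argument that avoids the external finite-dimensionality result, and the vanishing criterion you isolate is the same mechanism that resurfaces later in the paper in the dimension formula \eqref{dim-formula}, where $\dim\mathcal L_{u,F}=\prod_{j\in F}(\mbox{card}(\mathsf{sib}(u_j))-1)$ vanishes precisely when some coordinate sibling set is a singleton --- with the added merit that your version works for arbitrary positive weight systems, which is the generality of the lemma, not just for the spherically balanced weights used in Section 2. The paper's route is shorter on the page but leans on the cited corollary.
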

\begin{proof}
Note that $\Omega_{\emptyset}=\{ \rootb \}$. Thus the system \eqref{system-main}  is vacuous, and hence $\mathcal L_{\rootb, \emptyset} =  [e_{\rootb}]$.
The desired expression for $E$ is now obvious from \eqref{j-kernel-1}. The part (ii) is immediate from \cite[Corollary 3.1.14]{CPT}, while (i) is clear in view of \eqref{j-kernel-2} and (ii).
\end{proof}

\subsection{Statement of the main result}

We recall from \cite[Theorem 5.2.6]{CPT} that $\mathscr H_{\mf c_a}(\mathscr T)$ is a reproducing kernel Hilbert space of $E$-valued holomorphic functions defined on the open unit ball $\mathbb B^d$ in $\mathbb C^d$. The reproducing kernel $\kappa_{\mathscr H_{\mf c_a}(\mathscr T)} : \mathbb B^d \times \mathbb B^d \rar B(E)$ associated with $\mathscr H_{\mf c_a}(\mathscr T)$ is given by 
\beqn
 \kappa_{\mathscr H_{\mf c_a}(\mathscr T)}(z, w) =  \sum_{{F  \in \mathscr{P}}} \sum_{u \in \Omega_F} \Big (\sum_{\alpha \in \mathbb N^d}  \frac{\dep_{u}!}{(\dep_{u}+\alpha)!}\, {\prod_{j=0}^{|\alpha|-1}(|\dep_u|+a + j)}\,   z^{\alpha} \overline{w}^{\alpha} \Big) P_{\mathcal L_{u, F}}, \quad z, w \in \mathbb B^d, 
\eeqn
where $P_{\mathcal L_{u, F}}$ is the orthogonal projection on $\mathcal{L}_{u,F}$ (see \eqref{j-kernel-2}). 

We are now ready to state the main result of this paper.
\begin{theorem}\label{main-thm}
Let $a, d$ be positive integers such that $ad \neq 1,$
and fix $j=1,2$. Let $\mathscr T^{(j)}= (V^{(j)}, 
\mathcal{E}^{(j)})$ be the directed Cartesian product of 
locally finite rooted directed trees 
$\mathscr T^{(j)}_1, \ldots, \mathscr T^{(j)}_d$
of finite joint branching index. 
Let $\mathscr H_{\mf c_a}(\mathscr T^{(j)})$ be the Drury-Arveson-type Hilbert module associated with $\mathscr T^{(j)}$. 
Let 
$E^{(j)}$ be the subspace of constant functions in $\mathscr H_{\mf c_a}(\mathscr T^{(j)})$
and let $\mathcal{L}^{(j)}_{u,F}$ be as appearing in the decomposition \eqref{j-kernel-2} of $E^{(j)}$. 
Then the following statements are equivalent:
\begin{enumerate}
\item[(i)] The Hilbert modules $\mathscr{H}_{\mf c_a}(\mathscr T^{(1)})$ and $\mathscr{H}_{\mf c_a}(\mathscr T^{(2)})$ are isomorphic.
\item[(ii)] For any $ \alpha \in \mathbb{N}^d$ and $F \in \mathscr P$,
\beqn \label{main-cond} \displaystyle \sum_{\underset{\dep_u=\alpha}{u \in \Omega_{F}^{(1)}}} \dim\mathcal{L}^{(1)}_{u,F} = \displaystyle \sum_{\underset{\dep_v=\alpha}{v\in \Omega_{F}^{(2)}}} \dim \mathcal{L}^{(2)}_{v,F}. \eeqn
\item[(iii)] For any $n \in \mathbb{N}$ and $l=1, \ldots, d$,  
\beqn
\sum_{\underset{\dep_u=n \epsilon_l}{u \in \Omega^{(1)}_{\{l\}}}} (\mbox{card}( \mathsf{sib}_l(u)) -1) &=& \sum_{\underset{\dep_u=n \epsilon_l}{u \in \Omega^{(2)}_{\{l\}}}} (\mbox{card}( \mathsf{sib}_l(u)) -1),
 \eeqn
where $\epsilon_l$ is the $d$-tuple with $1$ in the $l^{\mbox{\tiny th}}$ place and zeros elsewhere.
\item[(iv)] For any $n \in \mathbb{N}$ and $l=1, \ldots, d$,
\beqn \mbox{card}(\mathcal{G}_{n}(\mathscr T^{(1)}_l))=\mbox{card}(\mathcal{G}_{n}(\mathscr T^{(2)}_l)), \eeqn
where $\mathcal{G}_{n}(\mathscr T^{(j)}_l)$ is the $n^{th}$ generation of $\mathscr T^{(j)}_l$, $j=1,2$. 
 \end{enumerate}
\end{theorem}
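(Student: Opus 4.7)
The plan is to establish the chain (iv) $\Leftrightarrow$ (iii) $\Leftrightarrow$ (ii) $\Leftrightarrow$ (i). The preliminary structural observation, promised in the abstract, is that separating the system \eqref{system-main} coordinate by coordinate identifies $\mathcal L_{u, F}$ with the Hilbert tensor product $\bigotimes_{j \in F} H_{u_j}$, where $H_{u_j} \subseteq l^2(\mathsf{sib}(u_j))$ is the hyperplane orthogonal to the vector of weights $\lambda_j(\cdot)$ on $\mathsf{sib}(u_j)$. In particular $\dim \mathcal L_{u, F} = \prod_{j \in F}(\mbox{card}(\mathsf{sib}(u_j)) - 1)$, and the definition \eqref{phi-F-eqn} of $\Phi_F$ forces $F$ to equal the support of $\dep_u$.

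For (iv) $\Leftrightarrow$ (iii), the sibling equivalence classes inside $\mathcal G_n(\mathscr T_l)$ are in bijection with $\mathcal G_{n-1}(\mathscr T_l)$ (each vertex has a unique parent, and leaflessness supplies children). Hence the sum in (iii) telescopes to $\mbox{card}(\mathcal G_n(\mathscr T_l)) - \mbox{card}(\mathcal G_{n-1}(\mathscr T_l))$, and a straightforward induction anchored at $\mbox{card}(\mathcal G_0(\mathscr T_l)) = 1$ yields the equivalence. For (ii) $\Leftrightarrow$ (iii), the tensor product structure factors the sum in (ii) as
\beqn
\sum_{\underset{\dep_u = \alpha}{u \in \Omega_F}} \dim \mathcal L_{u, F} = \prod_{j \in F}\biggl(\sum_{\underset{\dep_v = \alpha_j \epsilon_j}{v \in \Omega_{\{j\}}}} (\mbox{card}(\mathsf{sib}_j(v)) - 1)\biggr)
\eeqn
whenever $F = \mbox{supp}(\alpha)$, both sides vanishing otherwise; equality in each factor is precisely (iii), so it is equivalent to (ii).

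For (ii) $\Rightarrow$ (i), combine \eqref{j-kernel-2} with the explicit formula for $\kappa_{\mathscr H_{\mf c_a}(\mathscr T)}$ to obtain the module decomposition
\beqn
\mathscr H_{\mf c_a}(\mathscr T) \;=\; \bigoplus_{F \in \mathscr P}\bigoplus_{u \in \Omega_F} \mathcal L_{u, F} \otimes \mathscr H^{(a, \dep_u)},
\eeqn
where $\mathscr H^{(a, \alpha)}$ denotes the scalar-valued reproducing kernel Hilbert module whose kernel is the power series multiplying $P_{\mathcal L_{u, F}}$ in $\kappa_{\mathscr H_{\mf c_a}(\mathscr T)}$, and the module action acts on the second factor by coordinate multiplication. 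Grouping summands by $\alpha = \dep_u$ (so $F = \mbox{supp}(\alpha)$ is forced), condition (ii) supplies matching dimensions for the multiplicity spaces on both sides; fixing orthonormal bases and mapping summand to summand produces the required module isomorphism.

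The main obstacle is (i) $\Rightarrow$ (ii). The approach is to recover the depth-graded decomposition $E = \bigoplus_\alpha E_\alpha$, where $E_\alpha := \bigoplus_{u: \dep_u = \alpha} \mathcal L_{u, \mbox{supp}(\alpha)}$, as a module-intrinsic object. A direct computation from the kernel formula together with Lemma \ref{ortho-v} gives, for $f \in E_\alpha$ and $\beta \in \mathbb N^d$,
\beqn
\|\mathscr M_z^\beta f\|^2 \;=\; \frac{(\alpha + \beta)!}{\alpha!\,(|\alpha| + a)_{|\beta|}}\,\|f\|^2,
\eeqn
so the positive operator $T_\beta := P_E \mathscr M_z^{*\beta} \mathscr M_z^\beta|_E$ is diagonal in $\bigoplus_\alpha E_\alpha$ with eigenvalue $\rho_\alpha(\beta) := \frac{(\alpha + \beta)!}{\alpha!\,(|\alpha|+a)_{|\beta|}}$ on $E_\alpha$. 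The hypothesis $ad \neq 1$ is exactly what forces the sequences $\{\rho_\alpha(\beta)\}_{\beta \in \mathbb N^d}$ to be pairwise distinct: $\beta = \epsilon_l$ alone separates distinct $\alpha$'s when $a \neq d$, while the case $a = d \geq 2$ is resolved by $\beta = \epsilon_l + \epsilon_k$ with $l \neq k$; at $a = d = 1$ every $c_\alpha$ collapses to the Szeg\H{o} kernel $(1 - z\overline w)^{-1}$ and all $\rho_\alpha \equiv 1$, which is the von Neumann-Wold collapse alluded to in the statement. Thus $\{E_\alpha\}_\alpha$ is the joint spectral decomposition of the commuting family $\{T_\beta\}_{\beta}$, so any module isomorphism $U$ must restrict to unitaries $E_\alpha^{(1)} \to E_\alpha^{(2)}$, delivering the dimension equality (ii). An alternative packaging of this step uses the operator-valued representing measure for $\kappa_{\mathscr H_{\mf c_a}(\mathscr T)}$ promised in the introduction, whose atoms at each depth carry masses recording the same dimensions.
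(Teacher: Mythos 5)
Your proposal is correct in substance, but its analytic core follows a genuinely different route from the paper. For the hard implication (i) $\Rightarrow$ (ii), the paper works only with integer $a$, splits into the cases $a \Ge d$ and $a < d$ (passing to the spherical Cauchy dual module in the latter, via \eqref{cd-form}), invokes the existence and uniqueness of the operator-valued representing measure (Theorem \ref{slice-rep-gen}, Corollaries \ref{coro-1-slice-rep} and \ref{coro-2-slice-rep}), reduces module isomorphism to unitary equivalence of these measures (Lemma \ref{set-measure-lem}), and then compares the radial densities $w_{|\dep_u|}$ and the spherical densities $|z^{\dep_u}|^2/\|z^{\dep_u}\|^2_{L^2(\sigma)}$ to prove Proposition \ref{level-eq}; this is exactly where integrality of $a$ enters. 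You instead compress to the finite-dimensional joint cokernel and use the moment operators $T_\beta = P_E\,\mathscr M_z^{*\beta}\mathscr M_z^{\beta}|_E$, which by Lemma \ref{ortho-v} and \eqref{moment-Sca} act as the scalar $\rho_\alpha(\beta)$ on each depth block $E_\alpha$; since any Hilbert module isomorphism intertwines the $T_\beta$'s and the functions $\rho_\alpha$ are pairwise distinct precisely when $ad\neq 1$, the blocks are matched and (ii) follows. Likewise your (ii) $\Rightarrow$ (i) builds the isomorphism directly from the orthogonal decomposition of $\mathscr H_{\mf c_a}(\mathscr T)$ into $\mathcal L_{u,F}\otimes\mathscr H^{(a,\dep_u)}$ rather than verifying \eqref{4.1} for the representing measures. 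What your approach buys is economy and generality: it never uses integrality of $a$ (so it suggests the classification for all real $a>0$ with $(a,d)\neq(1,1)$), whereas the paper's measure-theoretic route buys the explicit representing measures, which it regards as results of independent interest. Two small points to tighten: in the separation argument for $a=d\Ge 2$, the mixed second-order values $\rho_\alpha(\epsilon_l+\epsilon_k)$ alone do not separate, e.g., $\alpha=(0,1)$ from $(1,0)$ when $d=2$; you need them jointly with the first-order values $\rho_\alpha(\epsilon_l)$ (which is all your joint-spectral-decomposition formulation actually requires, so only the parenthetical needs rephrasing). Also the factorization of $\sum_{\dep_u=\alpha}\dim\mathcal L_{u,F}$ into a product over $j\in F$ is not purely a consequence of the tensor identification of Theorem \ref{dimE-thm}; it is the combinatorial identity \eqref{c-sum}, which the paper proves via the canonical indexing set \eqref{c-choice} together with the independence of the sum from the choice of $\Omega_F$, and your write-up should supply that step. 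The purely graph-theoretic equivalences (ii) $\Leftrightarrow$ (iii) $\Leftrightarrow$ (iv) otherwise coincide with the paper's telescoping argument based on \eqref{2identities} and \eqref{dim-formula}.
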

\begin{remark} \label{rmk-vW}
The above result does not hold true in case $ad=1$. This may be attributed to the von Neumann-Wold decomposition for isometries \cite[Chapter I]{Co} $($see the discussion following \cite[Problem 2.3]{CPT-1}$)$.
In case $d=1$, (iv) is equivalent to the following: 
\beq \label{constant-g} \sum_{v \in V^{(1)}_{\prec} \cap\, \mathcal G_n(\mathscr T^{(1)})}\Big(\mbox{card}(\child{v})-1\Big)=\sum_{v \in V^{(2)}_{\prec} \cap \mathcal\,G_n(\mathscr T^{(2)})}\Big(\mbox{card}(\child{v})-1\Big).\eeq
In particular, Theorem \ref{main-thm} can be seen as a multivariable counterpart of $k^{\tiny \mbox{th}}$
generation branching degree as defined in \cite[Equation (92)]{ACJS} $($cf.\cite[Theorem 5.1]{CPT-1}$)$.
Further, it is evident from the equivalence of (i) and (iv) above that non-graph-isomorphic directed Cartesian products can yield isomorphic Drury-Arveson-type Hilbert modules. Finally, note that the operator theoretic statements (i) and (ii) are equivalent to purely graph theoretic statements (iii) and (iv).
\end{remark}

We discuss here some immediate consequences of Theorem \ref{main-thm}.
Recall that two directed graphs are {\it isomorphic} if there exists a bijection between their sets of vertices which preserves (directed) edges.
\begin{corollary}
Let $a, d$ be positive integers and let $\mathscr T= (V, 
\mathcal{E})$ be the directed Cartesian product of 
locally finite rooted directed trees $\mathscr T_1, \ldots, \mathscr T_d$ 
of finite joint branching index. 
Then
the Drury-Arveson-type Hilbert module $\mathscr H_{\mf c_a}(\mathscr T)$ associated with $\mathscr T$ is isomorphic to the classical Drury-Arveson-type Hilbert module $\mathscr H_{a}$ if and only if for any $j=1, \ldots, d,$ the directed tree $\mathscr T_j$ is graph isomorphic to the rooted directed tree without any branching vertex. 
\end{corollary}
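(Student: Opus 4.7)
The plan is to invoke Theorem \ref{main-thm} with $\mathscr T^{(1)}=\mathscr T$ and $\mathscr T^{(2)}$ chosen so that $\mathscr H_{\mf c_a}(\mathscr T^{(2)})=\mathscr H_a$. Let $\mathscr T_0$ denote the (unique up to graph isomorphism) rooted directed tree without branching vertices: by leaflessness every vertex of $\mathscr T_0$ has exactly one child, so $\mathscr T_0$ is graph isomorphic to $\mathbb N$ with edges $(n,n+1)$ and $\mbox{card}(\mathcal G_n(\mathscr T_0))=1$ for each $n\in\mathbb N$. Setting $\mathscr T^{(2)}=\mathscr T_0^d$, the identification noted in the abstract gives $\mathscr H_{\mf c_a}(\mathscr T_0^d)=\mathscr H_a$, so the corollary reduces to comparing $\mathscr H_{\mf c_a}(\mathscr T)$ with $\mathscr H_{\mf c_a}(\mathscr T_0^d)$. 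The reverse implication, that each $\mathscr T_l\cong\mathscr T_0$ forces $\mathscr H_{\mf c_a}(\mathscr T)\cong\mathscr H_a$, is then immediate for every $a,d$ via Remark \ref{graph-iso}.

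For the forward direction under the assumption $ad\neq 1$, the equivalence (i)\,$\Leftrightarrow$\,(iv) of Theorem \ref{main-thm} applied to this choice of $\mathscr T^{(2)}$ forces $\mbox{card}(\mathcal G_n(\mathscr T_l))=1$ for every $n\in\mathbb N$ and $l\in\{1,\ldots,d\}$. A short induction on $n$, using leaflessness (each level-$n$ vertex has at least one child) together with the generation count (at most one child per level-$n$ vertex), then yields $\mathscr T_l\cong\mathscr T_0$ for each $l$, and hence $\mathscr T\cong\mathscr T_0^d$.

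The boundary case $ad=1$, that is $a=d=1$, falls outside the scope of Theorem \ref{main-thm} and is the main obstacle; it must be handled directly. Setting $a=1$, $d=1$ in the formula for $\kappa_{\mathscr H_{\mf c_a}(\mathscr T)}$ stated just before Theorem \ref{main-thm}, the factor $\prod_{j=0}^{|\alpha|-1}(|\dep_u|+1+j)$ cancels against $\dep_u!/(\dep_u+\alpha)!$, and the decomposition \eqref{j-kernel-2} collapses the sum of projections to the identity on $E$, yielding $\kappa_{\mathscr H_{\mf c_1}(\mathscr T)}(z,w)=\frac{1}{1-z\overline w}\,I_E$. Thus $\mathscr H_{\mf c_1}(\mathscr T)$ is unitarily equivalent, as a Hilbert module, to the vector-valued Hardy space $H^2_E(\mathbb D)$ on which $\mathscr M_z$ acts as the unilateral shift of multiplicity $\dim E$, while $\mathscr H_1=H^2(\mathbb D)$ carries a unilateral shift of multiplicity one. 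The two modules are isomorphic iff $\dim E=1$, which by Lemma \ref{lem-E-finite} combined with the count $\dim\mathcal L_{u,\{1\}}=\mbox{card}(\sib{u})-1$ implied by \eqref{MuF-NuF} happens precisely when $\mathscr T$ has no branching vertex, i.e.\ $\mathscr T\cong\mathscr T_0$.
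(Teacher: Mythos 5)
Your argument is correct, and for $ad\neq 1$ it is essentially the paper's proof: sufficiency via the graph-isomorphism remark (Remark \ref{graph-iso}, i.e.\ \cite[Remark 3.1.1]{CPT}), necessity via the equivalence (i)$\Leftrightarrow$(iv) of Theorem \ref{main-thm} applied against $\mathscr T_0^d$, plus the uniqueness (up to graph isomorphism) of the leafless rooted tree with no branching vertex; your induction showing $\mbox{card}(\mathcal G_n(\mathscr T_l))=1$ for all $n$ forces $\mathscr T_l\cong\mathbb N$ is exactly the intended use of that uniqueness. Where you genuinely go beyond the paper is the boundary case $a=d=1$: the corollary as stated allows it, but the paper's one-line proof only cites Theorem \ref{main-thm}, whose hypotheses exclude $ad=1$ (cf.\ Remark \ref{rmk-vW}), so strictly speaking that case is left to the reader. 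Your direct treatment is sound: with $a=d=1$ the coefficient $\frac{\dep_u!}{(\dep_u+\alpha)!}\prod_{j=0}^{|\alpha|-1}(|\dep_u|+1+j)$ telescopes to $1$, and since $\sum_{F,u}P_{\mathcal L_{u,F}}=I_E$ by \eqref{j-kernel-2}, the kernel collapses to $\frac{1}{1-z\overline w}I_E$, so $\mathscr M_z$ is a unilateral shift of multiplicity $\dim E$; comparing multiplicities with $\mathscr H_1=H^2(\mathbb D)$ gives the claim, and $\dim E=1$ is equivalent to the absence of branching vertices by the dimension formula (here you should cite \eqref{dim-formula} or Corollary \ref{dimE} rather than \eqref{MuF-NuF} alone, since \eqref{MuF-NuF} only counts variables and equations and one still needs positivity of the weights to see that the single equation is nontrivial). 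This is precisely the von Neumann--Wold phenomenon the paper alludes to, so your version can be viewed as a more complete proof of the corollary as literally stated.
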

\begin{proof}
The sufficiency part is immediate from \cite[Remark 3.1.1]{CPT}, while the necessary part follows from the equivalence of (i) and (iv) of Theorem \ref{main-thm}, and the fact that a rooted directed tree without any branching vertex is unique up to graph isomorphism.
\end{proof}

\begin{corollary} \label{m-non-iso}
Given positive integers $a$ and $d$, we have the following statements: 
\begin{enumerate}
\item[(i)] There exist infinitely many mutually non-isomorphic  Drury-Arveson-type Hilbert modules. 
\item[(ii)] Given any positive integer $k,$ there exist $k$ number of mutually non-isomorphic  directed Cartesian products $\mathscr T=(V, \mathcal E)$ of locally finite rooted directed trees $\mathscr T_1, \ldots, \mathscr T_d$ such that the associated Drury-Arveson-type Hilbert modules $\mathscr H_{\mf c_a}(\mathscr T)$ are isomorphic.
\end{enumerate}
\end{corollary}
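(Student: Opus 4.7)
Both parts will be derived from the equivalence (i) $\Leftrightarrow$ (iv) of Theorem \ref{main-thm} by producing explicit families of directed Cartesian products with controlled generation cardinalities. The case $ad \neq 1$ falls directly under Theorem \ref{main-thm}, while the case $ad = 1$ (that is, $a=d=1$) is handled separately by the von Neumann-Wold decomposition alluded to in Remark \ref{rmk-vW}: the modules in question are then pure isometries, and they are classified by the dimension of their joint cokernel, which by Lemma \ref{lem-E-finite} is a function of the same discrete data on $\mathscr T$.

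For part (i), I would construct, for each $n \in \mathbb{N}$, a directed Cartesian product $\mathscr T^{(n)}$ whose first factor $\mathscr T^{(n)}_1$ is obtained by attaching $n+1$ disjoint copies of the ``one-way infinite path'' $\mathbb N$ to a common root, and whose remaining factors $\mathscr T^{(n)}_j$ ($j \geq 2$) are all equal to $\mathbb N$. Each $\mathscr T^{(n)}$ is locally finite with finite joint branching index, and $\mbox{card}(\mathcal G_1(\mathscr T^{(n)}_1)) = n+1$, which separates the family. Theorem \ref{main-thm} (in the case $ad \neq 1$) or the cokernel-dimension argument (in the case $ad=1$) then shows that $\{\mathscr H_{\mf c_a}(\mathscr T^{(n)})\}_{n \in \mathbb{N}}$ are pairwise non-isomorphic.

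For part (ii), fix $k \geq 1$. The strategy is to exhibit $k$ pairwise non-isomorphic rooted directed trees $T_1, \ldots, T_k$ that all have the same sequence of generation cardinalities; tensoring each against $d-1$ fixed copies of $\mathbb N$ then gives $k$ non-isomorphic directed Cartesian products whose component-wise generation sequences agree, so that condition (iv) of Theorem \ref{main-thm} holds and the associated modules are isomorphic. A concrete choice is to let $T_i$ have: a root with two children $a_i, b_i$; a second generation of size $2k+1$ distributed so that $a_i$ has $i$ children and $b_i$ has $2k+1-i$ children; and, from depth $3$ onward, each vertex with exactly one child. For $1 \leq i \leq k$ the unordered pairs $\{i, 2k+1-i\}$ are all distinct, ensuring pairwise non-isomorphism as rooted directed trees, while the generation sequence is the $i$-independent tuple $(1,2,2k+1,2k+1,\ldots)$.

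The main step I expect to require care is verifying that the Cartesian products $\mathscr T^{(i)} := T_i \times \mathbb N \times \cdots \times \mathbb N$ are genuinely non-isomorphic as directed graphs (not merely that the factors $T_i$ differ). The key observation is that $\mathscr T^{(i)}$ has a unique distinguished vertex (the root) and the set of ``branching directions'' from any vertex is an intrinsic feature: only one factor, namely $T_i$, contributes branching, while the other $d-1$ factors are linear. Consequently any graph isomorphism $\mathscr T^{(i)} \to \mathscr T^{(j)}$ must restrict to a rooted-tree isomorphism $T_i \to T_j$, which by construction forces $i=j$. With non-isomorphism in hand, the equivalence (i) $\Leftrightarrow$ (iv) of Theorem \ref{main-thm} (and the von Neumann-Wold refinement when $ad=1$) completes the proof.
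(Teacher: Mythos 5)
Your construction is essentially the paper's own: for (i) the paper takes $\mathscr T_{k,0}\times\mathscr T_{1,0}^{d-1}$ (a root with $k$ branches crossed with paths), and for (ii) trees whose root has two children with child-counts forming distinct unordered pairs of constant sum, again crossed with copies of $\mathscr T_{1,0}$, invoking Theorem \ref{main-thm} exactly as you do. The differences are cosmetic (your second generation has total size $2k+1$ instead of $2k$), and you are in fact a bit more careful than the paper in explicitly treating the case $ad=1$ via cokernel dimension and the von Neumann--Wold decomposition, and in sketching why the product graphs are non-isomorphic, which the paper merely asserts.
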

\begin{proof} 
We need the following example of rooted directed tree discussed in \cite[Chapter 6]{JJS}.
For a positive integer $n_0$, consider the directed tree 
$\mathscr T_{n_0, 0}=(V, \mathcal E)$ as follows:
\beqn
V =\mathbb N,  \quad \mathcal E =\{(0, j) : j =1, \ldots, n_0\}  \cup  \bigcup_{j=1}^{n_0} \big\{(j + (l-1)n_0, j+ln_0) : l \Ge 1 \big\}.
\eeqn

(i) Consider the directed Cartesian product \beqn \mathscr T^{(k)}:=\mathscr T_{k, 0} \times \mathscr T^{d-1}_{1, 0}, \quad k \Ge 1.\eeqn
It is now immediate from Theorem \ref{main-thm} that the Drury-Arveson-type Hilbert modules $\mathscr H_{\mf c_a}(\mathscr T^{(k)})$ associated with the directed Cartesian product $\mathscr T^{(k)}$, $k \Ge 1$ are mutually non-isomorphic.

(ii) Fix a positive integer $k$. 
For $j=1, \ldots, k,$ consider the rooted directed tree $\mathscr T_{1j}$ with $\child{\rootb}=\{u, v\}$, $\mbox{card}(\child{u})=2k-j$, $\mbox{card}(\child{v})=j,$ and  $\mbox{card}(\child{w})=1$ for all remaining vertices $w$ in $\mathscr T_{1j}$. 
Consider the directed Cartesian product \beqn \mathscr T^{(j)}:=\mathscr T_{1j} \times \mathscr T^{d-1}_{1, 0}, \quad j=1, \ldots, k.\eeqn
Then $\mathscr T^{(j)},$ $1 \Le j \Le k$ are mutually non-isomorphic. Now apply Theorem \ref{main-thm} to obtain the desired conclusion in (ii).
\end{proof}

Our proof of Theorem \ref{main-thm} occupies substantial part of this paper. It is fairly long and quite involved as compared to the case of $d=1$ (see \cite[Theorem 5.1]{CPT-1}). It is worth mentioning that in case $d=1$, the conclusion of Theorem \ref{main-thm} holds without the assumption of finite branching index. An essential reason for this is the fact that any left-invertible analytic operator admits an analytic model in the sense of \cite{Sh}. On the other hand, in dimension $d \Ge 2,$ there is no successful counterpart of Shimorin's construction of an analytic model for joint left-invertible tuples (cf. \cite[Theorem 4.2.4, Remark 4.2.5 and Appendix]{CPT}). 
The proof of the main theorem is comprised of two parts, namely, Sections 2 and 3. 
Here is a brief overview of these sections. 
\begin{enumerate}
\item[$\bullet$] In Section 2, we introduce and study a one parameter family $\mathcal S_{\mathscr T}$ of spherically balanced multishifts $\slamc{c}$. 
This is carried out in three subsections.

\begin{enumerate}
\item[$\diamond$] The first subsection is devoted to an elaborated description of joint cokernel $E$  of multishifts in the family $\mathcal S_{\mathscr T}$. It turns out that the building blocks $\mathcal L_{u, F}$ appearing in the decomposition \eqref{j-kernel-2} of $E$ can be identified with tensor product of certain hyperplanes (see Theorem \ref{dimE-thm}). These hyperplanes further can be looked upon as the kernel of row matrices with all entries equal to $1$ and of size dependent on coordinate siblings of $u$.  This description readily provides a neat formula for the dimension of $E$ (see Corollary \ref{dimE}).
\item[$\diamond$]
In the second subsection,
we show that the multishifts in $\mathcal S_{\mathscr T}$ can be modeled as multiplication $d$-tuples on reproducing kernel Hilbert spaces $\mathscr H_{\mf c}(\mathscr T)$ of vector-valued holomorphic functions defined on a ball in $\mathbb C^d$ (see Theorem \ref{S-c-a-kernel}). We also provide a compact formula for the reproducing kernel associated with $\mathscr H_{\mf c}(\mathscr T)$ (see \eqref{rk-formula}). In particular, these results apply to Drury-Arveson-type multishifts $\slamc{c_a}$ and their spherical Cauchy dual tuples $\slamc{c_a}^{\mf s}$.
The sequences $\mf c$ associated with $\slamc{c_a}$ and $\slamc{c_a}^{\mf s}$ are given respectively by
\beq
\label{c-a}
\mf c(t) := \mf c_a(t)=\frac{t+d}{t+a}, \quad \mf c(t) :=\frac{1}{\mf c_a(t)}, \quad t \in \mathbb N,
\eeq
where $a$ is a positive real number. We emphasize that if we relax the assumption that $\mathscr T$ is of finite joint branching index, then the above model theorem fails unless the dimension $d=1$ (see Remark \ref{int-s-k}).
\item[$\diamond$]
In the last subsection, we  introduce and study the notion of an operator-valued representing measure  for the Hilbert module $\mathscr H_{\mf c}(\mathscr T)$.
Existence of a representing measure for $\mathscr H_{\mf c}(\mathscr T)$ is shown to be equivalent to the assertion that $\{\prod_{j=0}^{n-1} \mf c(j)\}_{n \in \mathbb N}$ is a Hausdorff moment sequence (see Theorem \ref{slice-rep-gen}). As an application, we show that Drury-Arveson-type Hilbert module $\mathscr H_{\mf c_a}(\mathscr T)$ admits a representing measure if $a$ is an integer bigger than or equal to $d$. We also show that $\mathscr H^{\mf s}_{\mf c_a}(\mathscr T)$ (model space of $\slamc{c_a}^{\mf s}$) admits a representing measure if $a$ is a positive integer less than $d$. Further, we explicitly compute the representing measures in both these situations (see Corollaries \ref{coro-1-slice-rep} and \ref{coro-2-slice-rep}).
\end{enumerate}
\item[$\bullet$] In Section 3, we prove the main theorem. This section begins with the observation that the classification of  Drury-Arveson-type Hilbert modules $\mathscr H_{\mf c_a}(\mathscr T)$ is equivalent to the unitary equivalence of  operator-valued representing measures of $\mathscr H_{\mf c_a}(\mathscr T)$ (resp. $\mathscr H^{\mf s}_{\mf c_a}(\mathscr T)$) if $a \Ge d$  (resp. if $a < d$) (see Lemma \ref{set-measure-lem}).  We then establish another key observation that any isomorphism between two Drury-Arveson-type Hilbert modules preserves the decomposition \eqref{j-kernel-2} of $E$ over each generation (see Proposition \ref{level-eq}). Finally, we put all the pieces together to obtain a proof of Theorem \ref{main-thm}.
\end{enumerate}

A strictly higher dimensional fact in graph theory ({\it constant on parents is constant on generations}) and closely related to the notion of spherically balanced multishift is added as an appendix (see Theorem A).


\section{A family of spherically balanced multishifts}

Let $\mathscr T = (V,\mathcal E)$ be the directed Cartesian product of locally finite rooted directed trees $\mathscr T_1, \cdots, \mathscr T_d$. 
Given a sequence $\mf c : \mathbb{N} \rar (0, \infty)$, we can associate  a system $\lambdab_{\mf c}=\{\lambda_j(w) : w \in V^{\circ}, ~j=1, \ldots, d\}$ to $\mathscr T$ as follows:
\beq \label{sp-wts}
\lambda_{j}(w) = \sqrt{\frac{\mf c(|\dep_v|)}{{\mbox{card}(\childi{j}{v})}}} \sqrt{\frac{\dep_{v_j}  + 1}{|\dep_v| + d}}, \quad w \in \childi{j}{v},~v \in V,~ j=1, \ldots, d,
\eeq
where $\dep_v$ denotes the depth of the vertex $v$ in $\mathscr T$.
 Note that
 \beqn
\sup_{v \in V} \sum_{w \in \childi{j}{v}} \lambda_{j}(w)^2 = \sup_{v \in V} \mf c(|\dep_v|) \frac{\dep_{v_j}  + 1}{|\dep_v| + d}.
\eeqn 
It follows that the multishift $S_{\lambdab_{\mf c}}$ with weights $\lambdab_{\mf c}$ is bounded if and only if the sequence $\mf c$ is bounded (see \eqref{bdd}). In this case,  
as shown in \cite[Proposition 5.2.3]{CPT}, the multishift 
$S_{\lambdab_{\mf c}}$ turns out to be commuting and spherically balanced. Recall that a commuting multishift $S_{\lambdab}=(S_1, \cdots, S_d)$ is {\it spherically balanced} if the function $\mf C : V \rar (0, \infty)$ given by
\beqn  \mf C(v) := \sum_{j=1}^d \|S_j e_v\|^2, \quad v \in V \eeqn
is
constant on every generation $\mathcal G_t$, $t \in  \mathbb N$. In case $\slam =\slamc{c},$ \beq 
\label{constant-gen}
\mf C(v)=\mf c(|\dep_v|), \quad v \in V. \eeq 
In case the directed trees $\mathscr T_j$ are without branching vertices, $\slamc{c}$ is spherical (or homogeneous with respect to the group of unitary $d \times d$ matrices) in the sense of \cite[Definition 1.1]{CY}. This may be concluded from \cite[Theorem 2.1]{CY}.
On the other hand, for a spherically balanced multishift $\slam$, the spherical Cauchy dual tuple $\slam^{\mf s}$ is always commuting. In fact, by \cite[Proposition 5.2.10]{CPT}, $\slam^{\mf s}$ is commuting if and only if 
\beq \label{w-sp}
\mf C~\mbox{is constant on~} \mathsf{Par}(v):=\bigcup_{j=1}^d \parenti{j}{v}~\mbox{for all~}v \in V^{\circ}.
\eeq
In dimension bigger than $1$, there is a curious fact that the apparently weaker condition \eqref{w-sp} implies that $\mf C$ is constant on every generation $\mathcal G_t,$ $t \in \mathbb N$. 
It follows that if  $d \Ge 2$, then $S_{\lambdab}$ is a spherically balanced $d$-tuple if and only if its spherical Cauchy dual $\slam^{\mf s}$ is commuting. 
Since the above facts play no essential role in the main result of this paper, we relegate its proof to an appendix. 
Needless to say, these facts are strictly higher dimensional.

The following family of multishifts plays a central role in the present investigations:  
\beq
\label{family}
\mathcal S_{\mathscr T} :=\{S_{\lambdab_{\mf c}} : \inf {\mf c} >0, ~\sup {\mf c} < \infty\}.
\eeq
Our proof of Theorem \ref{main-thm} is based on a thorough study of this family.
This includes a dimension formula for joint cokernel, an analytic model and  existence of operator-valued representing measures for multishifts in this family.

\subsection{A dimension formula}

In this subsection, we obtain a neat formula for the dimension of joint cokernel of members $\slamc{c}$ belonging to the family $\mathcal S_{\mathscr T}$. It is worth noting that this formula is independent of $\mf c$ due to the specific form of the weight system $\lambdab_{\mf c}$ of multishifts from $\mathcal S_{\mathscr T}$ (see Lemma \ref{system-main-Sc-lem} below).
First a definition (recall all required notations from Subsection 1.2).

Fix a nonempty $F \in \mathscr P$ and let $u \in \Omega_F$. 
 For $j \in F,$ define the linear functional $X_j : l^2(\mathsf{sib}(u_j)) \rar \mathbb C$ 
by 
\beq \label{def-Xj}
X_j(f)=\displaystyle \sum_{\eta \in \mathsf{sib}(u_j)}f(\eta), \quad f \in l^2(\mathsf{sib}(u_j)).\eeq
The description of the joint cokernel for a member of $\mathcal S_{\mathscr T}$ is intimately related to the kernel of the linear functionals $X_j,$ $j \in F.$
\begin{lemma} 
\label{system-main-Sc-lem} 
Let $\mathscr T = (V,\mathcal E)$ be the directed Cartesian product of locally finite rooted directed trees $\mathscr T_1, \cdots, \mathscr T_d$ 
and let $S_{\lambdab_{\mf c}}$ be a multishift belonging to the family $\mathcal S_{\mathscr T}$. Let $\mathcal{L}_{u,F}$ be as appearing in \eqref{j-kernel-2}.
Then, for $F \in \mathscr P$ and $u \in \Omega_F,$ the following are equivalent:
\begin{enumerate}
\item[(i)] $f$ belongs to $\mathcal L_{u, F}$.
\item[(ii)] 
$ \displaystyle
\sum_{w \in \mathsf{sib}_j(v_G | u_j)} f(w)  =0$ for any $j \in F,$  $v_G \in \mathsf{sib}_{F, G}(u) \mbox{ with }  G = F \setminus \{ j \}.$ 
\item[(iii)] $\displaystyle \sum_{\tiny \eta \in \mathsf{sib}(u_{j})} f(v_G|\eta) e_{\eta} \in \ker X_{j}$ for any $j \in F,$  $v_G \in \mathsf{sib}_{F, G}(u) \mbox{ with }  G = F \setminus \{j\},$ where $X_j$ is as given in \eqref{def-Xj}.
\end{enumerate}
\end{lemma}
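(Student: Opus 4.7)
The proof of this lemma is essentially a direct unpacking of definitions, with one crucial observation that makes the weighted and unweighted sums interchangeable. Here is the plan.

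The starting point is the very definition of $\mathcal{L}_{u,F}$ recorded in \eqref{system-main}: $f \in \mathcal{L}_{u,F}$ means that for all $j \in F$ and $v_G \in \mathsf{sib}_{F,G}(u)$ with $G = F \setminus \{j\}$, the weighted sum $\sum_{w \in \mathsf{sib}_j(v_G|u_j)} f(w)\lambda_j(w)$ vanishes. So (i) is literally the definition of the system with weights $\lambda_j(w)$, whereas (ii) is the same system with all weights replaced by $1$. The strategy is to exploit the explicit form of the weight system \eqref{sp-wts} for multishifts in the family $\mathcal S_{\mathscr T}$.

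For the implication (i)$\Leftrightarrow$(ii), I would first observe that every vertex $w$ in the range of summation $\mathsf{sib}_j(v_G|u_j)$ shares the same $j$-th parent: indeed, by the very definition of $\mathsf{sib}_j$, each such $w$ satisfies $w_i = (v_G|u_j)_i$ for $i \neq j$ and $w_j \in \mathsf{sib}(u_j)$, so all such $w$ lie in $\childi{j}{v}$ for the common vertex $v := \parenti{j}{v_G|u_j}$. Now invoking the formula \eqref{sp-wts}, the weight
\[
\lambda_j(w) \;=\; \sqrt{\frac{\mf c(|\dep_v|)}{\mbox{card}(\childi{j}{v})}}\,\sqrt{\frac{\dep_{v_j}+1}{|\dep_v|+d}}
\]
depends only on $v$, not on the particular child $w \in \childi{j}{v}$. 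Since $\mf c$ is strictly positive, this common weight is a nonzero constant and can be factored out of the sum, immediately yielding the equivalence of (i) and (ii).

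The equivalence (ii)$\Leftrightarrow$(iii) is then just a re-indexing of the sum. Since for $j \notin G$ the map $\eta \mapsto v_G|\eta$ gives a bijection between $\mathsf{sib}(u_j)$ and $\mathsf{sib}_j(v_G|u_j)$, we rewrite
\[
\sum_{w \in \mathsf{sib}_j(v_G|u_j)} f(w) \;=\; \sum_{\eta \in \mathsf{sib}(u_j)} f(v_G|\eta),
\]
and recognise the right-hand side as $X_j\!\left(\sum_{\eta \in \mathsf{sib}(u_j)} f(v_G|\eta)\, e_\eta\right)$ by the definition \eqref{def-Xj}. Thus the vanishing of the sum in (ii) is precisely the membership of the vector in $\ker X_j$, giving (iii). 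Both implications are routine bookkeeping once the definitions of $v_G|\eta$ and $\mathsf{sib}_j$ are carefully tracked. The only non-trivial ingredient is the observation that makes (i)$\Leftrightarrow$(ii) work, namely the constancy of $\lambda_j$ on $\childi{j}{v}$ forced by the weight normalisation in \eqref{sp-wts}; this is also the reason why the dimension of $\mathcal L_{u,F}$ will turn out to be independent of the sequence $\mf c$.
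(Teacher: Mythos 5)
Your proposal is correct and follows essentially the same route as the paper: the paper's proof consists of observing, via \eqref{sp-wts}, that $\lambda_j(\cdot)$ is constant on the relevant sibling sets (so the nonzero common weight factors out of the system \eqref{system-main}, giving (i)$\Leftrightarrow$(ii)), and noting that (ii)$\Leftrightarrow$(iii) is immediate from the definition \eqref{def-Xj} of $X_j$. Your version merely spells out the details (the common $j$-th parent and the re-indexing bijection $\eta \mapsto v_G|\eta$) that the paper leaves implicit.
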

\begin{proof}
By \eqref{sp-wts}, for each $j=1, \ldots, d$, $\lambda_j(\cdot)$ is constant on $\mathsf{sib}_F(u)$, and hence the equivalence of (i) and (ii) follows from \eqref{system-main}. The equivalence of (ii) and (iii) is immediate from the definition \eqref{def-Xj} of $X_j.$
\end{proof}
\begin{remark} \label{slam-E-dual}
It follows from the implication (i) $\Rightarrow$ (ii) above that 
$\mathcal L_{u, F}$ (and hence by \eqref{j-kernel-2} the joint kernel $E$ of $\slamc{c}^*$) is independent of the choice of $\mf c$. 
\end{remark}

The following result identifies the building blocks $\mathcal L_{u, F}$ appearing in the orthogonal decomposition of joint cokernel of $\slamc{c}$ with tensor product of kernel of $X_j$, $j \in F$.
\begin{theorem}  \label{dimE-thm}
Let $\mathscr T = (V,\mathcal E)$ be the directed Cartesian product of locally finite rooted directed trees $\mathscr T_1, \cdots, \mathscr T_d$ and
let $S_{\lambdab_{\mf c}}$ be a member of $\mathcal S_{\mathscr T}$. 
Let $\mathcal{L}_{u,F}$ be as appearing in \eqref{j-kernel-2} with $F=\{i_1, \ldots, i_k\}$ 
for some positive integer $k \in \{1, \ldots, d\}$. Then $\mathcal{L}_{u,F}$ is isomorphic to the finite dimensional space 
$\ker X_{i_1} \otimes \cdots \otimes \ker X_{i_k}$
where $X_j~(j \in F)$ is as defined in \eqref{def-Xj}.
In particular, \beq 
\label{dim-formula}
\dim \mathcal L_{u, F}=\prod_{j \in F} (\mbox{card}(\sib{u_{j}})-1). \eeq
\end{theorem}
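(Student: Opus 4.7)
The plan is to reduce everything to a single algebraic fact about tensor products of hyperplanes, after encoding the system from Lemma \ref{system-main-Sc-lem}(iii) on a tensor product Hilbert space.

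First, I would observe that since $\mathsf{sib}_F(u)$ consists of those $v \in V$ for which $v_j \in \mathsf{sib}(u_j)$ when $j \in F$ and $v_j = u_j$ when $j \notin F$, there is a natural bijection
\[
\mathsf{sib}_F(u) \;\longleftrightarrow\; \prod_{j \in F} \mathsf{sib}(u_j),
\]
which, at the level of standard orthonormal bases, gives a canonical unitary identification
\[
\Psi : l^2(\mathsf{sib}_F(u)) \;\longrightarrow\; \bigotimes_{j \in F} l^2(\mathsf{sib}(u_j)), \qquad e_v \longmapsto \bigotimes_{j \in F} e_{v_j}.
\]
Via $\Psi$, an arbitrary $f \in l^2(\mathsf{sib}_F(u))$ becomes a tensor whose slice in the $j$-th coordinate, with the remaining coordinates fixed by some $v_G \in \mathsf{sib}_{F,G}(u)$ (where $G = F \setminus\{j\}$), is precisely the vector $\sum_{\eta \in \mathsf{sib}(u_j)} f(v_G|\eta) e_\eta$ appearing in Lemma \ref{system-main-Sc-lem}(iii).

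Next, for each $j \in F$ I would introduce the partial contraction
\[
\widetilde X_j : \bigotimes_{i \in F} l^2(\mathsf{sib}(u_i)) \longrightarrow \bigotimes_{i \in F \setminus \{j\}} l^2(\mathsf{sib}(u_i)),
\]
obtained by applying the functional $X_j$ of \eqref{def-Xj} in the $j$-th slot and leaving the others untouched. Vanishing of all the slice-sums appearing in Lemma \ref{system-main-Sc-lem}(iii) is then equivalent to $\widetilde X_j(\Psi f) = 0$. Consequently
\[
\Psi(\mathcal L_{u,F}) = \bigcap_{j \in F} \ker \widetilde X_j.
\]

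The key algebraic lemma I would then prove (or invoke) is the following general statement: if $V_1, \ldots, V_k$ are finite-dimensional Hilbert spaces and $\phi_j \in V_j^*$ is nonzero for each $j$, and if $\widetilde \phi_j$ denotes the extension of $\phi_j$ to the tensor product by the identity on the other factors, then
\[
\bigcap_{j=1}^k \ker \widetilde \phi_j \;=\; \ker \phi_1 \otimes \cdots \otimes \ker \phi_k.
\]
This is proved by choosing, for each $j$, a vector $\xi_j \in V_j$ with $\phi_j(\xi_j)=1$, decomposing $V_j = \ker \phi_j \oplus \mathbb{C}\xi_j$, and expanding an arbitrary tensor in the resulting product basis: $\widetilde \phi_j$ kills exactly those basis tensors whose $j$-th factor is $\xi_j$, so $\ker \widetilde \phi_j$ is the span of basis tensors with $j$-th factor in $\ker \phi_j$, and the intersection over $j$ is precisely $\bigotimes_j \ker \phi_j$. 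Since each $X_j$ is nonzero on $l^2(\mathsf{sib}(u_j))$ (the constant function $1$ witnesses this), applying this lemma with $V_j = l^2(\mathsf{sib}(u_j))$ and $\phi_j = X_j$ yields
\[
\Psi(\mathcal L_{u,F}) = \ker X_{i_1} \otimes \cdots \otimes \ker X_{i_k},
\]
which gives the claimed isomorphism.

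The dimension formula \eqref{dim-formula} then follows immediately: each $X_j$ is a nonzero linear functional on a space of dimension $\mathrm{card}(\mathsf{sib}(u_j))$, so $\dim \ker X_j = \mathrm{card}(\mathsf{sib}(u_j)) - 1$, and dimensions multiply under tensor products. I expect the main obstacle to be purely bookkeeping, namely verifying cleanly that the contraction condition on $\widetilde X_j$ exactly reproduces the family of scalar equations in Lemma \ref{system-main-Sc-lem}(iii) as $v_G$ ranges over $\mathsf{sib}_{F,G}(u)$; once this correspondence is established, the tensor-product identity does all the remaining work.
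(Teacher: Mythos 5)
Your argument is correct and reaches the same structural conclusion as the paper, but by a noticeably leaner route. The paper constructs the identification $l^2(\sibi{F}{u}) \cong l^2(\sib{u_{i_1}}) \otimes \cdots \otimes l^2(\sib{u_{i_k}})$ via the universal property of the tensor product applied to a multilinear map, proves injectivity of the induced map $\varPhi$ by a linear-independence argument, matches dimensions, writes down $\varPhi^{-1}$ explicitly, and then establishes the two inclusions $\varPhi(\ker X_{i_1}\otimes\cdots\otimes\ker X_{i_k}) \subseteq \mathcal L_{u,F}$ and its reverse by direct computations with the sibling decomposition and Lemma \ref{system-main-Sc-lem}. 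You instead take the canonical unitary $e_v\mapsto \bigotimes_{j\in F} e_{v_j}$ coming from the bijection $\sibi{F}{u}\leftrightarrow\prod_{j\in F}\sib{u_j}$, which legitimately replaces the universal-property and injectivity work by one line, recast Lemma \ref{system-main-Sc-lem}(iii) as membership in $\bigcap_{j\in F}\ker\widetilde X_j$ for the slot contractions $\widetilde X_j$, and dispose of both inclusions at once with a general lemma: for nonzero functionals, the intersection of the slot kernels equals the tensor product of the kernels. The bookkeeping you defer is indeed routine, since $\mathsf{sib}_{F,G}(u)$ with $G=F\setminus\{j\}$ enumerates exactly the choices of the coordinates indexed by $G$, so $\widetilde X_j(\Psi f)=0$ is, coefficient by coefficient, the family of slice equations in the lemma. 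One small slip in your product-basis argument: $\widetilde\phi_j$ annihilates the basis tensors whose $j$-th factor lies in $\ker\phi_j$ and sends those with $j$-th factor $\xi_j$ to linearly independent tensors, not the other way around as written; the conclusion you then draw, that $\ker\widetilde\phi_j$ is spanned by the basis tensors with $j$-th factor in $\ker\phi_j$, is the correct one, so this is a wording error rather than a gap.
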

\begin{proof}
We begin with the fact that $l^2(\sibi{F}{u})$ can be identified as the tensor product of $l^2(\sib{u_j})$, $j \in F.$ Since our proof utilizes the precise form of the isomorphism between these spaces, we provide elementary details essential in this identification.

Define $\phi :  l^2(\mathsf{sib}(u_{i_1})) \oplus  \cdots \oplus  l^2(\mathsf{sib}(u_{i_k})) \rar l^2(\sibi{F}{u})$ by
\beq \label{def-phi}
\phi((f_{i_1}, \ldots, f_{i_k}))(v)  = \prod_{j \in F} f_j(v_j), \quad \mbox{$v =(v_1, \ldots, v_d) \in \sibi{F}{u}$.} 
\eeq
It is easy to see that $\phi$ is multilinear. By the universal property of tensor product of vector spaces \cite[Theorem 4.14]{H}, there exists a unique linear map $\varPhi$ such that the following diagram commutes:
\beqn \label{uni-prop-1}
\begin{tikzcd}[row sep=15ex, column sep=10ex]
l^2(\mathsf{sib}(u_{i_1})) \oplus  \cdots \oplus  l^2(\mathsf{sib}(u_{i_k}) 
\arrow{rd}{\phi} \arrow{r}{\otimes}  & l^2(\mathsf{sib}(u_{i_1})) \otimes  \cdots \otimes  l^2(\mathsf{sib}(u_{i_k})  \arrow{d}{\varPhi}\\
& l^2(\sibi{F}{u})
\end{tikzcd}
\eeqn
\noindent
By \eqref{def-phi} and $\varPhi \circ \otimes=\phi$, the action of $\varPhi$ on elementary tensors is given by
\beq \label{phi-tilde}
\varPhi (f_{i_1} \otimes \cdots \otimes f_{i_k})(v_{1}, \ldots, v_{d}) = \prod_{j \in F} f_{j}(v_{j}), \quad v \in \sibi{F}{u}.
\eeq
The map $\varPhi$ turns out to be an isomorphism. Since we are not aware of an appropriate reference, we include necessary details. 
We first verify that $\varPhi$ is injective. Let $f= \sum_{j=1}^N f_{j1} \otimes \cdots \otimes f_{jk} \in \ker \varPhi.$  Suppose to the contrary that $f \neq 0$. By \cite[Lemma 1.1]{LC},   \beq \label{lin-ind} \mbox{$\{f_{ji} : j=1, \ldots, N\}$ is linearly independent for every integer $i=1, \ldots, k.$} \eeq Since $\varPhi(f)=0$, it follows that
\beqn
\sum_{j=1}^N f_{j1}(v_{i_1}) \cdots f_{jk}(v_{i_k})=0, \quad v \in \sibi{F}{u}.
\eeqn
Fixing all coordinates of $v \in \sibi{F}{u}$ except ${i^{\tiny \mbox{th}}_k}$, and using \eqref{lin-ind}, we conclude that
\beqn
f_{j1}(v_{i_1}) \cdots f_{jk-1}(v_{i_{k-1}})=0, \quad v \in \sibi{F}{u}, ~j=1, \ldots, N.
\eeqn
Since $f_{jk-1} \neq 0$, by fixing all coordinates of $v \in \sibi{F}{u}$ except ${i_{k-1}}^{\tiny th}$, we conclude that 
\beqn
f_{j1}(v_{i_1}) \cdots f_{jk-2}(v_{i_{k-2}})=0, \quad v \in \sibi{F}{u}, ~j=1, \ldots, N.
\eeqn
Continuing like this, we arrive at the conclusion that $f_{j1}$ is identically $0$ for $j=1, \ldots, N$, which contradicts \eqref{lin-ind}.
Hence we must have $f=0,$ that is, $\varPhi$ is injective.
Further, since $\mbox{card}(\sibi{F}{u})=\prod_{j \in F} \mbox{card}(\sib{u_{j}})$ (see \eqref{MuF-NuF}), we obtain
\beqn
\dim\big(l^2(\mathsf{sib}(u_{i_1})) \otimes  \cdots \otimes  l^2(\mathsf{sib}(u_{i_k})\big) =
\dim l^2(\sibi{F}{u}). 
\eeqn
It follows that  $\varPhi$ is an isomorphism.
However, for rest of the proof, we also need to know the action of $\varPhi^{-1}$. To see that, let $f  \in l^2(\sibi{F}{u}).$  Then 
\beqn
f = \sum_{ w \in \sibi{F}{u}} f(w) e_{w}
    = \sum_{w \in \sibi{F}{u}}  f(w) \prod_{i \in F}  \chi_{w_i}, 
    \eeqn     
    where, for $i \in F$ and $v \in \sibi{F}{u}$,  
\beqn  \chi_{w_i}(v) = \begin{cases}
     1 & \mbox{if}~ v_i=w_i,\\ 
    0 & \mbox{otherwise}.
    \end{cases}
    \eeqn
It is now easy to see using \eqref{phi-tilde} that
  \beq\label{phi-inv}
   \varPhi^{-1}(f) = \sum_{w \in \sibi{F}{u}}  f(w) (e_{w_{i_1}} \otimes \cdots \otimes e_{w_{i_k}}). 
  \eeq

We now check that $\varPhi$ maps $\ker X_{i_1} \otimes  \cdots \otimes \ker X_{i_k}$ into $\mathcal L_{u, F}.$ To see this, let $f_j \in l^2(\sib{u_{i_j}})$ be such that \beq \label{Xij=0} X_{i_j}f_j=0, \quad j=1, \ldots, k. \eeq 
In view of Lemma \ref{system-main-Sc-lem}, it suffices to check that 
\begin{align} \label{system-main-5}
\displaystyle \sum_{w \in \mathsf{sib}_{i_j}(v_G | u_{i_j})} \varPhi(f_1 \otimes \cdots \otimes f_k)(w)  = 0, \quad v_G \in \mathsf{sib}_{F, G}(u),~ G = F \setminus \{ i_j \}, ~j=1, \ldots, k.
\end{align}
However, for  $v_G \in \mathsf{sib}_{F, G}(u),~ G = F \setminus \{ i_j \}, ~ j =1, \ldots,  k$,
\beqn
 \sum_{w \in \mathsf{sib}_{i_j}(v_G | u_{i_j})} \varPhi(f_1 \otimes \cdots \otimes f_k)(w) &\overset{\eqref{phi-tilde}}=& \sum_{w \in \mathsf{sib}_{i_j}(v_G | u_{i_j})} \prod_{l=1}^k f_{l}(w_{i_l}) \\ &=& \sum_{\eta \in \mathsf{sib}(u_{i_j})} \Big(\prod_{\underset{l \neq j}{l=1}}^k f_{l}(v_{i_l})\Big)f_j(\eta) \\
 &\overset{\eqref{def-Xj}}=&\Big(\prod_{\underset{l \neq j}{l=1}}^k f_{l}(v_{i_l})\Big) X_{i_j}f_j,
\eeqn
which is $0$ in view of \eqref{Xij=0}. This yields \eqref{system-main-5}, and hence $$\varPhi\big(\ker X_{i_1} \otimes  \cdots \otimes \ker X_{i_k}\big) \subseteq\mathcal L_{u, F}.$$ To see that this inclusion is an equality, let $f \in \mathcal L_{u, F}$. By \cite[Lemma 4.1.5(i)]{CPT}, 
\beq \label{star}
\mathsf{sib}_F (u) = \displaystyle \bigsqcup_{v_G \in \mathsf{sib}_{F, G}(u)} \mathsf{sib}_j (v_G|u_j), \quad G=F \setminus \{j\}, ~j \in F. \eeq
It follows that for $G=F \setminus \{i_j\},$ $j=1, \ldots, k$,
\beqn
 \varPhi^{-1}(f) 
& \overset{\eqref{phi-inv}} =& \displaystyle  \sum_{w \in \sibi{F}{u}}  f(w)(e_{w_{i_1}} \otimes \cdots \otimes e_{w_{i_k}}) \\
&\overset{\eqref{star}}= & \displaystyle  \sum_{{v_G \in \mathsf{sib}_{F, G}(u)}} ~ \sum_{w \in \sibi{i_j}{v_G|u_{i_j}}}  f(w)(e_{w_{i_1}} \otimes \cdots \otimes e_{w_{i_k}})\\
&= &  \displaystyle \sum_{{\tiny v_G \in \mathsf{sib}_{F, G}(u)} }  e_{w_{i_1}} \otimes \cdots \otimes e_{w_{i_{j-1}}} \otimes { \sum_{\tiny w_{i_j} \in \mathsf{sib}(u_{i_j})} f(v_G|w_{i_j}) e_{w_{i_j}}} \otimes e_{w_{i_{j+1}}} \otimes \cdots \otimes e_{w_{i_k}}.
\eeqn
However, since 
$f \in \mathcal L_{u, F},$ by Lemma \ref{system-main-Sc-lem},
$$\sum_{\tiny w_{i_j} \in \mathsf{sib}(u_{i_j})} f(v_G|w_{i_j}) e_{w_{i_j}} \in \ker X_{i_j}.$$ It is now clear that $ \varPhi^{-1}(f) \in \ker X_{i_1} \otimes  \cdots \otimes \ker X_{i_k}.$
Since dimension of tensor product of vector spaces is product of dimensions of respective vector spaces \cite[Theorem 4.14]{H},
the remaining part is immediate.
\end{proof}

A careful examination of the proof of Theorem \ref{dimE-thm} shows that the formula for the joint cokernel holds for any multishift with constant weight system taking value $1$ (commonly known as {\it adjacency operator} in dimension $d=1$; refer to \cite{JJS}).
The following is immediate from \eqref{j-kernel-1}, \eqref{j-kernel-2}  and \eqref{dim-formula} (see also Lemma \ref{lem-E-finite}).
\begin{corollary} \label{dimE}
Let $\mathscr T = (V,\mathcal E)$ be the directed Cartesian product of locally finite rooted directed trees $\mathscr T_1, \cdots, \mathscr T_d$
of finite joint branching index. 
Let $S_{\lambdab_{\mf c}}$ be a member of $\mathcal S_{\mathscr T}$ and let $E$ denote the joint kernel of $S^*_{\lambdab_{\mf c}}$.  Then the dimension of $E$ is given by
\beqn
\dim E 
= \sum_{{F  \in \mathscr{P}}} \sum_{u \in \Omega_{F}} \prod_{i \in F} (\mbox{card}(\sib{u_{i}})-1) = 1+ \sum_{\underset{F \neq \emptyset}{F  \in \mathscr{P}}} \sum_{u \in \Omega_{F}} \prod_{i \in F} (\mbox{card}(\sib{u_{i}})-1).
\eeqn
\end{corollary}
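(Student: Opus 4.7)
The plan is to assemble the conclusion directly from three already-established ingredients: the orthogonal decomposition \eqref{j-kernel-2} of the joint kernel $E$, the tensor product identification of the building blocks $\mathcal{L}_{u,F}$ from Theorem \ref{dimE-thm}, and the finiteness guarantee from Lemma \ref{lem-E-finite}. There is no substantive obstacle; the corollary is essentially bookkeeping once these pieces are in hand, which is presumably why it is stated as a corollary rather than a theorem.

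First, I would invoke \eqref{j-kernel-2} to write
\[
E = \bigoplus_{F \in \mathscr{P}} \bigoplus_{u \in \Omega_F} \mathcal{L}_{u,F},
\]
so that $\dim E = \sum_{F \in \mathscr P}\sum_{u \in \Omega_F} \dim \mathcal{L}_{u,F}$ as cardinals. Next, by Lemma \ref{lem-E-finite}(ii), the finite joint branching index hypothesis makes $\dim E$ finite, which together with part (i) of that lemma ensures that only finitely many terms in the double sum above are nonzero; in particular, the sum is a legitimate sum of nonnegative integers rather than a formal expression. Applying \eqref{dim-formula} from Theorem \ref{dimE-thm} then gives $\dim \mathcal{L}_{u,F} = \prod_{j \in F}(\mathrm{card}(\mathsf{sib}(u_j)) - 1)$ for every nonempty $F \in \mathscr{P}$ and every $u \in \Omega_F$, yielding the first displayed equality.

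For the second equality, I would separate out the $F = \emptyset$ contribution. As noted in the proof of Lemma \ref{lem-E-finite}, $\Omega_{\emptyset} = \{\rootb\}$ and $\mathcal{L}_{\rootb, \emptyset} = [e_{\rootb}]$, so this term contributes exactly $1$; this agrees with the convention that the empty product equals $1$, so extending the formula $\prod_{j\in F}(\mathrm{card}(\mathsf{sib}(u_j))-1)$ to $F = \emptyset$ is consistent. Splitting the outer sum into $F = \emptyset$ and $F \neq \emptyset$ then produces the rewritten form $1 + \sum_{F \neq \emptyset}\sum_{u \in \Omega_F}\prod_{i \in F}(\mathrm{card}(\mathsf{sib}(u_i)) - 1)$, completing the proof.
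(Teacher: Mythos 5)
Your proposal is correct and follows essentially the same route as the paper, which derives the formula immediately from the decomposition \eqref{j-kernel-2} (equivalently \eqref{j-kernel-1}), the dimension formula \eqref{dim-formula} of Theorem \ref{dimE-thm}, and the finiteness assertions of Lemma \ref{lem-E-finite}; your separation of the $F=\emptyset$ term via $\mathcal L_{\rootb,\emptyset}=[e_{\rootb}]$ and the empty-product convention is exactly the intended bookkeeping.
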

\begin{remark} 
In case $d=1,$ the above formula for $E$ simplifies to
\beqn
\dim E = 1+  \sum_{u \in \Omega_{\{1\}}}  (\mbox{card}(\sib{u})-1),
\eeqn
which holds for arbitrary choice of positive weights $\lambdab$ $($see \cite[Proposition 3.5.1(ii)]{JJS}$)$. This formula resembles the expression for the (undirected) graph invariant $\Upsilon(\mathscr T)$ introduced in \cite[Pg 3]{AFFP}, which counts precisely the number of the so-called partial conjugations of the right angled Artin group $A_{\mathscr T}$ defined by deep nodes.
\end{remark}

\subsection{An analytic model}

In this section, we obtain an analytic model for multishifts belonging to the family $\mathcal S_{\mathscr T}$ (see \eqref{family}). 
The treatment here relies on a technique developed in 
the proof of \cite[Theorem 5.2.6]{CPT}.
We begin with 
an important aspect of the family $\mathcal S_{\mathscr T}$ that it is closed under the operation of taking spherical Cauchy dual.

\begin{lemma} \label{dual-class}
Let $\mathscr T = (V,\mathcal E)$ be the directed Cartesian product of locally finite rooted directed trees $\mathscr T_1, \cdots, \mathscr T_d$. 
If $S_{\lambdab_{\mf c}} \in \mathcal S_{\mathscr T}$, then
$S^{\mf s}_{\lambdab_{\mf c}}$ is well-defined and  $S^{\mf s}_{\lambdab_{\mf c}}$ belongs to $\mathcal S_{\mathscr T}$. 
\end{lemma}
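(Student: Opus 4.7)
The plan is to unravel the two defining conditions of the family $\mathcal S_{\mathscr T}$ directly in terms of the sequence $\mf c$, and then show that the weight system of the spherical Cauchy dual is again of the form \eqref{sp-wts} for a different sequence lying in the same admissible range.

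First I would verify that $\slamc{c}^{\mf s}$ is well-defined, i.e.\ that $\slamc{c}$ is joint left invertible. Using \eqref{sp-wts} and summing over $j$ and over $\childi{j}{v}$, one finds
\[
\sum_{j=1}^d \|S_j e_v\|^2 \;=\; \mf c(|\dep_v|) \sum_{j=1}^d \frac{\dep_{v_j}+1}{|\dep_v|+d} \;=\; \mf c(|\dep_v|),
\]
which reconfirms \eqref{constant-gen}. Since $\inf \mf c>0$, the operator $\sum_j S_j^*S_j$ is bounded below by a positive constant, so $\slamc{c}$ is joint left invertible and the spherical Cauchy dual $\slamc{c}^{\mf s}$ exists.

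Next I would compute the weight system of $\slamc{c}^{\mf s}$ using the formula \eqref{dual-wts}. Substituting the expression above for $\sum_i \|S_i e_v\|^2$ and the weights from \eqref{sp-wts}, the dual weight at $w\in\childi{j}{v}$ becomes
\[
\lambda_j(w)\,\mf c(|\dep_v|)^{-1} \;=\; \sqrt{\frac{\mf c(|\dep_v|)^{-1}}{\mbox{card}(\childi{j}{v})}}\,\sqrt{\frac{\dep_{v_j}+1}{|\dep_v|+d}}.
\]
This is precisely of the form \eqref{sp-wts} with $\mf c$ replaced by the reciprocal sequence $\mf c'(t):=1/\mf c(t)$, $t \in \mathbb N$. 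Thus $\slamc{c}^{\mf s}=S_{\lambdab_{\mf c'}}$.

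Finally, the assumption that $0<\inf \mf c \le \sup \mf c <\infty$ immediately gives $0<\inf \mf c' \le \sup \mf c' <\infty$, so $S_{\lambdab_{\mf c'}}\in\mathcal S_{\mathscr T}$ by \eqref{family}. There is no serious obstacle here: the argument is essentially a direct calculation once one observes that the normalizing factor in the definition of the spherical Cauchy dual is exactly $\mf c(|\dep_v|)$, which has the effect of inverting $\mf c$ while preserving the remaining multiplicative factor $\sqrt{(\dep_{v_j}+1)/(|\dep_v|+d)}/\sqrt{\mbox{card}(\childi{j}{v})}$ that characterizes the family.
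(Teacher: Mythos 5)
Your proposal is correct and follows essentially the same route as the paper: establish joint left-invertibility from $\inf\mf c>0$ via the identity $\sum_j\|S_je_v\|^2=\mf c(|\dep_v|)$, then compute the dual weights from \eqref{dual-wts} and recognize them as the weight system \eqref{sp-wts} for the reciprocal sequence $1/\mf c$, which is again bounded above and below. No gaps.
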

\begin{proof}
Assume that $S_{\lambdab_{\mf c}}=(S_1, \ldots, S_d) \in \mathcal S_{\mathscr T}$.
Note that by \eqref{constant-gen}, $$\inf_{v \in V}\sum_{j=1}^d\|S_j e_v\|^2 = \inf {\mf c} > 0, \quad  \sup_{v \in V}\sum_{j=1}^d\|S_j e_v\|^2 = \sup {\mf c} < \infty.$$
It follows that $S_{\lambdab_{\mf c}}$ is joint left-invertible, and hence $S^{\mf s}_{\lambdab_{\mf c}}$ is well-defined. On the other hand, 
by \eqref{dual-wts} and \eqref{constant-gen}, the weights of $S^{\mf s}_{\lambdab_{\mf c}}$ are given by
\beqn
\lambda_{j}(w) = \frac{1}{\sqrt{\mf c(|\dep_v|)}}\sqrt{\frac{1}{{\mbox{card}(\childi{j}{v})}}} \sqrt{\frac{\dep_{v_j}  + 1}{|\dep_v| + d}}, \quad w \in \childi{j}{v},~v \in V, ~j=1, \ldots, d.
\eeqn
It is now clear that $S^{\mf s}_{\lambdab_{\mf c}} \in \mathcal S_{\mathscr T}$.
\end{proof}

We skip the proof of the following simple yet useful fact, which may be obtained by a routine inductive argument (cf. Proof of \cite[Corollary 5.2.12]{CPT}).
\begin{lemma} \label{effect-wts} 
Let $\mathscr T= (V, 
\mathcal{E})$ be the directed Cartesian product of 
locally finite rooted directed trees $\mathscr T_1, \ldots, \mathscr T_d$ and
let
$\slam$ be a commuting multishift on $\mathscr T$ with weight system $\lambdab=\{\lambda_j(v) : v \in V^{\circ}, ~ j=1, \ldots, d\}$. For a bounded sequence $\mf w$ of positive numbers, let $\lambdab_{\mf w}$ denote the system
\beq \label{p-wts}
 {\mf w}(|\dep_v|)\lambda_j(w), \quad w \in \childi{j}{v},~v \in V,~ j=1, \ldots, d.
\eeq
Then the multishift $S_{\lambdab_\mf w}$ on $\mathscr T$ with weight system as given in \eqref{p-wts} is commuting. 
Moreover, for any $v \in V$ and $\beta \in \mathbb N^d$, we obtain
\begin{enumerate}
\item[(i)]
$S^\beta_{\lambdab_\mf w} e_v = \Big(\prod_{p=0}^{|\beta|-1} {{\mf w({|\dep_v|+p})}}\Big) {\slam^{\beta}} e_v,$
\item[(ii)] $\|S^\beta_{\lambdab_\mf w} e_v\|^2 = \Big(\prod_{p=0}^{|\beta|-1} {{\mf w({|\dep_v|+p})^2}}\Big) \|{\slam^{\beta}} e_v\|^2$.
\end{enumerate}
\end{lemma}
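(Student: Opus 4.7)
The plan is to settle commutativity first, then deduce both identities by an induction on $|\beta|$ using the key observation that, in both the original and the reweighted multishift, the vector $\slam^\beta e_v$ is supported on the single generation $\mathcal G_{|\dep_v|+|\beta|}$.

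For commutativity of $S_{\lambdab_{\mf w}}$, I would just plug the new weights $\mu_j(w) := {\mf w}(|\dep_v|)\lambda_j(w)$ for $w \in \childi{j}{v}$ into the criterion \eqref{commuting}. Taking $u \in \childi{j}{\childi{i}{v}}$, both $\parenti{j}{u}$ and $\parenti{i}{u}$ lie at depth $|\dep_v|+1$, while $u$ lies at depth $|\dep_v|+2$. Therefore both $\mu_j(u)\mu_i(\parenti{j}{u})$ and $\mu_i(u)\mu_j(\parenti{i}{u})$ acquire exactly the same common factor ${\mf w}(|\dep_v|+1)\,{\mf w}(|\dep_v|)$, and the identity reduces to the original commutativity relation $\lambda_j(u)\lambda_i(\parenti{j}{u}) = \lambda_i(u)\lambda_j(\parenti{i}{u})$ satisfied by $\slam$. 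Boundedness is inherited from $\sup\mf w<\infty$ together with \eqref{bdd} applied to $\slam$.

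For part (i), I would induct on $n:=|\beta|$. The base case $n=0$ is trivial (empty product equals $1$), and the case $n=1$ follows directly from $S_j^{\mf w} e_v = \sum_{w\in\childi{j}{v}} {\mf w}(|\dep_v|)\lambda_j(w) e_w = {\mf w}(|\dep_v|)\, S_j e_v$. For the inductive step, write $\beta = \beta' + \epsilon_j$ with $|\beta'|=n-1$. By commutativity (just verified) we may apply $S_j^{\mf w}$ last, and by the induction hypothesis
\[
S^{\beta'}_{\lambdab_{\mf w}} e_v = \Bigl(\prod_{p=0}^{n-2} {\mf w}(|\dep_v|+p)\Bigr) \slam^{\beta'} e_v.
\]
Now the crucial point is that $\slam^{\beta'} e_v$ is a linear combination of basis vectors $e_w$ with $|\dep_w| = |\dep_v| + (n-1)$; this is immediate from the recursive description $S_j e_u = \sum_{w\in\childi{j}{u}}\lambda_j(w) e_w$, since each application of a coordinate shift $S_j$ raises $|\dep_\cdot|$ by $1$. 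Consequently, when we hit this sum with $S_j^{\mf w}$, the scalar ${\mf w}(|\dep_w|) = {\mf w}(|\dep_v|+n-1)$ factors out cleanly, giving
\[
S^\beta_{\lambdab_{\mf w}} e_v = {\mf w}(|\dep_v|+n-1)\Bigl(\prod_{p=0}^{n-2} {\mf w}(|\dep_v|+p)\Bigr)\slam^\beta e_v = \Bigl(\prod_{p=0}^{n-1} {\mf w}(|\dep_v|+p)\Bigr)\slam^\beta e_v,
\]
which is precisely (i). Part (ii) then follows by taking squared norms of both sides of (i), since the scalar is positive real.

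There is no real obstacle here; the only subtlety is ensuring we may genuinely apply $S_j^{\mf w}$ last (which requires commutativity, handled first) and recognizing that all basis vectors appearing in $\slam^{\beta'} e_v$ lie in a single generation so the depth-dependent factor ${\mf w}(\cdot)$ may be pulled outside the sum as a scalar. If a cleaner writeup is desired, one may instead give a direct closed-form argument: enumerate monotone lattice paths $\varnothing = \alpha^{(0)} < \alpha^{(1)} < \cdots < \alpha^{(n)}=\beta$ with $|\alpha^{(p+1)}|-|\alpha^{(p)}|=1$, observe that each summand of $S^\beta_{\lambdab_{\mf w}} e_v$ along such a path picks up precisely the factor $\prod_{p=0}^{n-1}{\mf w}(|\dep_v|+p)$, and factor it out uniformly.
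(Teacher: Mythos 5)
Your proposal is correct and follows essentially the argument the paper has in mind: the paper skips the proof, noting it is "a routine inductive argument," and your induction on $|\beta|$ (with the observation that $\slam^{\beta'}e_v$ is supported in a single generation, so the factor $\mf w(|\dep_v|+|\beta'|)$ pulls out) together with the reduction of the commutativity criterion \eqref{commuting} to that for $\slam$ is exactly that routine argument, carried out carefully.
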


Here is a key observation in obtaining an analytic model for members of $\mathcal S_{\mathscr T}$. 
\begin{lemma} \label{ortho-powers}
Let $\mathscr T = (V,\mathcal E)$ be the directed Cartesian product of locally finite rooted directed trees $\mathscr T_1, \ldots, \mathscr T_d$
of finite joint branching index. 
Let $S_{\lambdab_{\mf c}}$ be in the family $\mathcal S_{\mathscr T}$ and let $E$ denote the joint kernel of $S^*_{\lambdab_{\mf c}}$. Then the following statements are true:
\begin{enumerate}
\item[(i)] $E$ is invariant under $S^{*\alpha}_{\lambdab_{\mf c}} S^{\alpha}_{\lambdab_{\mf c}}$ and $S^{*\alpha}_{\lambdab_{\mf c}} S^{\alpha}_{\lambdab_{\mf c}}|_E$ is boundedly invertible for all $\alpha \in \mathbb N^d$. 
\item[(ii)] The multisequence $\{S^{\alpha}_{\lambdab_{\mf c}}E\}_{\alpha \in \mathbb N^d}$ of subspaces of $l^2(V)$ is mutually orthogonal.
\end{enumerate}
\end{lemma}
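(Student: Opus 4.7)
The plan is to exploit the specific form \eqref{sp-wts} of the weight system of $\slamc{c}\in\mathcal S_{\mathscr T}$ to show that $\|S_{\lambdab_{\mf c}}^{\alpha} e_v\|^{2}$ depends on $v$ only through the multi-depth $\dep_v$. Expanding $S_{\lambdab_{\mf c}}^{\alpha} e_v$ by applying $S_1^{\alpha_1},\ldots,S_d^{\alpha_d}$ in order and using that the overall depth at the $m$-th step equals $|\dep_v|+m-1$, each squared coefficient $\langle S_{\lambdab_{\mf c}}^{\alpha}e_v,e_t\rangle^{2}$ factors as a product of a purely depth-dependent term and a path product $\prod_{k,j}\mbox{card}(\child{t_k^{(j-1)}})^{-1}$, where $t_k^{(j)}$ traces the $\mathscr T_k$-path from $v_k$ to $t_k$. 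Summing over $t$ and using the identity $\sum_{t_k}\prod_{j=1}^{\alpha_k}\mbox{card}(\child{t_k^{(j-1)}})^{-1}=1$ (easy induction on $\alpha_k$) then shows $\|S_{\lambdab_{\mf c}}^{\alpha} e_v\|^{2}$ is a function of $\dep_v$ and $\alpha$ alone.

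For part (i), Lemma~\ref{ortho-v}(i) makes $S_{\lambdab_{\mf c}}^{*\alpha}S_{\lambdab_{\mf c}}^{\alpha}$ diagonal with entries $\|S_{\lambdab_{\mf c}}^{\alpha}e_v\|^{2}$; since $\dep_v=\dep_u$ for every $v\in\sibi{F}{u}$, this operator acts as a positive scalar on each $l^{2}(\sibi{F}{u})$, hence preserves $\mathcal L_{u,F}$, and summing over \eqref{j-kernel-2} yields invariance of $E$. Bounded invertibility follows from finite-dimensionality of $E$ (Lemma~\ref{lem-E-finite}(ii)) together with the injectivity of $S_{\lambdab_{\mf c}}^{\alpha}$ on $E$, which is immediate from the orthogonality in Lemma~\ref{ortho-v}(ii).

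For part (ii), given $\alpha\neq\beta$, I would set $\gamma=\alpha\wedge\beta$, $\alpha'=\alpha-\gamma$, $\beta'=\beta-\gamma$, so $\alpha'\wedge\beta'=0$. Using commutativity and the fact that $S_{\lambdab_{\mf c}}^{*\gamma}S_{\lambdab_{\mf c}}^{\gamma}$ acts as a positive scalar on the relevant depth layer, $\langle S_{\lambdab_{\mf c}}^{\alpha}h,S_{\lambdab_{\mf c}}^{\beta}h'\rangle$ reduces, up to a positive scalar, to $\langle S_{\lambdab_{\mf c}}^{\alpha'}h,S_{\lambdab_{\mf c}}^{\beta'}h'\rangle$. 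If exactly one of $\alpha',\beta'$ is zero, I can commute an $S_i^*$ (with $i$ in the nonzero support) to the front using that the $S_j^*$'s commute among themselves, annihilating $h$ or $h'$ via $E\subseteq\bigcap_i\ker S_i^*$. The hard part is the remaining case with $\alpha',\beta'\neq 0$ of disjoint supports: the inner product vanishes unless $\dep_u+\alpha=\dep_{u'}+\beta$, which forces $\supp(\beta')\subseteq F$; picking $j\in\supp(\beta')$ and using that the $\mathscr T_j$-path from $w_j$ to any $v_j\in\sib{u_j}$ passes only through ancestors of $u_j$ common to all of $\sib{u_j}$, the coefficient of $h(v)$ in the inner product becomes constant in $v_j$, and the inner sum $\sum_{v_j\in\sib{u_j}}h(v)$ vanishes by the defining constraint on $\mathcal L_{u,F}$ from Lemma~\ref{system-main-Sc-lem}. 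The main delicacy is verifying that every $v_j$-dependence in both $\langle S_{\lambdab_{\mf c}}^{\alpha'}e_v,e_{t(v,w)}\rangle$ and $\langle S_{\lambdab_{\mf c}}^{\beta'}e_w,e_{t(v,w)}\rangle$ resolves into ancestor-data common to all of $\sib{u_j}$, so that the constraint applies uniformly over the inner summation.
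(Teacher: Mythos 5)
Your proposal is correct, but it proves the lemma by a genuinely different route than the paper. The paper does not argue from scratch: it uses the weight--perturbation Lemma \ref{effect-wts} with $\mf w(t)=\sqrt{\mf c(t)}\sqrt{(t+a)/(t+d)}$ to write $\|S^{\alpha}_{\lambdab_{\mf c}}e_v\|^2=K(|\alpha|,|\dep_v|)\,\|S^{\alpha}_{\lambdab_{\mf c_a}}e_v\|^2$ and $S^{*\alpha}_{\lambdab_{\mf c}}S^{\alpha}_{\lambdab_{\mf c}}f=K(|\alpha|,|\dep_u|)\,S^{*\alpha}_{\lambdab_{\mf c_a}}S^{\alpha}_{\lambdab_{\mf c_a}}f$ on $\mathcal L_{u,F}$, and then imports both the invariance/invertibility of $S^{*\alpha}_{\lambdab_{\mf c_a}}S^{\alpha}_{\lambdab_{\mf c_a}}|_E$ and the mutual orthogonality of $\{S^{\alpha}_{\lambdab_{\mf c_a}}E\}_{\alpha}$ from the previously established Drury--Arveson case \cite[Lemma 5.2.7]{CPT}, together with Remark \ref{slam-E-dual} and $\inf\mf c>0$. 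You instead re-derive the depth-only dependence of $\|S^{\alpha}_{\lambdab_{\mf c}}e_v\|^2$ (which is exactly \eqref{moment-gen}) and then prove (ii) directly from the sibling-sum constraints of Lemma \ref{system-main-Sc-lem}; this buys a self-contained argument that does not lean on the prior paper, at the cost of the combinatorial case analysis. Your treatment of the disjoint-support case is only sketched, but the sketch is sound: with $h\in\mathcal L_{u,F}$, $h'\in\mathcal L_{u',F'}$, the depth matching $\dep_u+\alpha'=\dep_{u'}+\beta'$ does force $\supp(\beta')\subseteq F$, and for $j\in\supp(\beta')$ the forced identification $t_j=v_j$, $w_j=\parentn{\beta'_j}{v_j}=\parentn{\beta'_j-1}{\parent{u_j}}$ shows that every weight in $\langle S^{\alpha'}e_v,e_t\rangle$ and $\langle S^{\beta'}e_w,e_t\rangle$ depends on $v_j$ only through $\dep_{v_j}$ and through ancestors common to all of $\sib{u_j}$ (by the form \eqref{sp-wts} of the weights), so the coefficient of $h(v_G|v_j)$ is constant on $\sib{u_j}$ and the inner sum vanishes by Lemma \ref{system-main-Sc-lem}. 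Two small points to tighten: the reduction via $S^{*\gamma}_{\lambdab_{\mf c}}S^{\gamma}_{\lambdab_{\mf c}}$ should be performed after fixing $h\in\mathcal L_{u,F}$ and $h'\in\mathcal L_{u',F'}$ (the scalar differs from layer to layer, so it is not a single constant on all of $S^{\alpha'}E$), and injectivity of $S^{\alpha}_{\lambdab_{\mf c}}$ on $E$ comes from both parts of Lemma \ref{ortho-v} together with positivity of the diagonal entries $\|S^{\alpha}_{\lambdab_{\mf c}}e_v\|^2$ (leaflessness), not from part (ii) alone; these are cosmetic and do not affect correctness.
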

\begin{proof}
The proof relies on the case in which $\mf c = \mf c_a,$ $a >0$ (see 
\eqref{c-a}) as established in \cite[Lemma 5.2.7]{CPT}. 
Let $\alpha \in \mathbb N^d.$ We apply Lemma \ref{effect-wts}(ii) to the system $\lambdab_{\mf c_a}$ given by
\beq \label{weights-0}
\lambda_{j}(w) = \frac{1}{\sqrt{{\mbox{card}(\childi{j}{v})}}} \sqrt{\frac{\dep_{v_j}  + 1}{|\dep_v| + a}}, \quad w \in \childi{j}{v},~v \in V, ~j=1, \cdots, d,
\eeq 
with 
\beq
\label{m-perturb}
\mf w(t) := \sqrt{\mf c(t)} \sqrt{\frac{t+a}{t+d}}, \quad t \in \mathbb N,
\eeq
to conclude that \beq \label{2.21}
\|S^\alpha_{\lambdab_\mf c} e_v\|^2 = K(|\alpha|, |\dep_v|) \|{\slamc{c_a}^{\alpha}} e_v\|^2, \quad v \in V, \eeq where 
\beq \label{K(s, t)}
K(s, t)=\prod_{p=0}^{s-1} {{\mf w({t+p})^2}} ~\overset{\eqref{m-perturb}}=~\prod_{p=0}^{s-1} \mf c(t+p) \prod_{p=0}^{s-1}\frac{t+a+p}{t+d+p}, \quad s, t \in \mathbb N.\eeq
For $F \in \mathscr P$ and $u \in \Omega_F,$ let $f \in \mathcal L_{u, F}.$
It is now easy to see from Lemma \ref{ortho-v}(i) and \eqref{2.21} that
\beq
\label{s-s}
S^{*\alpha}_{\lambdab_\mf c} S^\alpha_{\lambdab_\mf c} f = K(|\alpha|, |\dep_u|) {\slamc{c_a}^{*\alpha}} {\slamc{c_a}^{\alpha}} f.
\eeq
Since $\mathcal L_{u, F}$ is invariant under $S^{*\alpha}_{\lambdab_{\mf c_a}} S^{\alpha}_{\lambdab_{\mf c_a}}$ (\cite[Proof of Lemma 5.2.7]{CPT}), by Remark \ref{slam-E-dual}, we must have $S^{*\alpha}_{\lambdab_{\mf c}} S^{\alpha}_{\lambdab_{\mf c}}f \in \mathcal L_{u, F} \subseteq E.$
Also, since $S^{*\alpha}_{\lambdab_{\mf c_a}} S^{\alpha}_{\lambdab_{\mf c_a}}|_E$ is boundedly invertible for all $\alpha \in \mathbb N^d$, 
the remaining part in (i) is now immediate from \eqref{s-s}, \eqref{K(s, t)}, Lemma \ref{lem-E-finite}(i) and the assumption that $\inf \mf c > 0$. 

To see (ii), let $\alpha \in \mathbb N^d$ and $$f = \displaystyle \sum_{{F \in \mathscr P}}\sum_{u \in \Omega_F} f_{u, F} \in E, \quad f_{u, F} \in \mathcal L_{u, F}.$$ 
Since $\mathcal L_{u, F} \subseteq l^2(\sibi{F}{u})$, we have \beq \label{LuF-g-1} \mathcal L_{u, F} \subseteq \bigvee \{e_v : v \in \mathcal G_{|\dep_u|}\}. \eeq
This combined with Lemma \ref{effect-wts}(i) and \eqref{K(s, t)} implies that $S^\alpha_{\lambdab_\mf c} f_{u, F}= \sqrt{K(|\alpha|, |\dep_u|)} {\slamc{c_a}^{\alpha}} f_{u, F}$.
Thus we obtain
\beqn
\slamc{c}^{\alpha}f = \sum_{{F \in \mathscr P}}\sum_{u \in \Omega_F} \slamc{c}^{\alpha} f_{u, F} = \sum_{{F \in \mathscr P}}\sum_{u \in \Omega_F} \sqrt{K(|\alpha|, |\dep_u|)} {\slamc{c_a}^{\alpha}} f_{u, F}.
\eeqn
The desired conclusion in (ii) now follows from the fact that $\{S^{\alpha}_{\lambdab_{\mf c_a}}E\}_{\alpha \in \mathbb N^d}$ is mutually orthogonal (see \cite[Lemma 5.2.7]{CPT}).
\end{proof}

We note that the conclusion of (ii) in the above lemma holds for any (spherically) balanced, injective weighted shift on a rooted directed tree (see \cite[Lemma 15 and Theorem 16]{BDPP}). We believe that this result fails in higher dimensions.

We now present the promised analytic model for multishifts belonging to the family $\mathcal S_{\mathscr T}$ (cf. \cite[Theorem 5.2.6]{CPT}, \cite[Theorem 2.2]{CT}). 
\begin{theorem}
\label{S-c-a-kernel}
Let $\mathscr T = (V,\mathcal E)$ be the directed Cartesian product of locally finite rooted directed trees $\mathscr T_1, \cdots, \mathscr T_d$ 
of finite joint branching index. 
Let $S_{\lambdab_{\mf c}}$ be a multishift belonging to $\mathcal S_{\mathscr T}$. Let $E$ denote the joint kernel of $S^*_{\lambdab_{\mf c}}$ and let $\mathcal{L}_{u,F}$ be as appearing in \eqref{j-kernel-2}.
Then $S_{\lambdab_{\mf c}}$ is unitarily equivalent to the multiplication $d$-tuple $\mathscr M_{z}=(\mathscr M_{z_1}, \cdots, \mathscr M_{z_d})$ on a reproducing kernel Hilbert space $\mathscr H_{\mf c}(\mathscr T)$ of $E$-valued holomorphic functions defined on the open ball $\mathbb B^d_r$ in $\mathbb C^d$ for some positive $r$. Further, the reproducing kernel $\kappa_{\mathscr H_{\mf c}(\mathscr T)} : \mathbb B^d_r \times \mathbb B^d_r \rar B(E)$ associated with $\mathscr H_{\mf c}(\mathscr T)$ is given by 
\beq \label{rk-formula}
 \kappa_{\mathscr H_{\mf c}(\mathscr T)}(z, w) =  \sum_{{F  \in \mathscr{P}}} \sum_{u \in \Omega_F} \Big (\sum_{\alpha \in \mathbb N^d}  \frac{\dep_{u}!}{(\dep_{u}+\alpha)!}\, {\prod_{j=0}^{|\alpha|-1}\frac{|\dep_u|+d + j}{\mf c(|\dep_u|+j)}}\,   z^{\alpha} \overline{w}^{\alpha} \Big) P_{\mathcal L_{u, F}}, ~ z, w \in \mathbb B^d_r, \quad \quad
\eeq
where $P_{\mathcal L_{u, F}}$ is the orthogonal projection on $\mathcal{L}_{u,F}$.
\end{theorem}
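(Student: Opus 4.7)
The plan is to mimic the strategy used for the analogous model theorem \cite[Theorem 5.2.6]{CPT} in the special case $\mf c=\mf c_a$, and then bootstrap from that case to the general $\mf c\in\mathcal S_{\mathscr T}$ via the perturbation formula of Lemma \ref{effect-wts}. The starting point is the wandering-type decomposition $l^2(V)=\bigoplus_{\alpha\in\mathbb N^d} S^\alpha_{\lambdab_{\mf c}} E$. Mutual orthogonality of the summands is Lemma \ref{ortho-powers}(ii). For completeness, I would exploit that for any $f\in\mathcal L_{u,F}$, Lemma \ref{effect-wts}(i) applied with $\mf w(t)=\sqrt{\mf c(t)}\sqrt{(t+a)/(t+d)}$ (together with the fact that $f$ is supported on $\mathcal G_{|\dep_u|}$) yields $S^\alpha_{\lambdab_{\mf c}}f=\sqrt{K(|\alpha|,|\dep_u|)}\,S^\alpha_{\lambdab_{\mf c_a}}f$, where $K(s,t)$ is as in \eqref{K(s, t)}. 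Since $K(|\alpha|,|\dep_u|)>0$, we get $S^\alpha_{\lambdab_{\mf c}}(\mathcal L_{u,F})=S^\alpha_{\lambdab_{\mf c_a}}(\mathcal L_{u,F})$; combined with Remark \ref{slam-E-dual} and the already established completeness of the corresponding wandering decomposition for $\slamc{c_a}$ in \cite[Theorem 5.2.6]{CPT}, this yields $\bigoplus_\alpha S^\alpha_{\lambdab_{\mf c}}E=l^2(V)$.

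With this decomposition in hand, I would define the model space $\mathscr H_{\mf c}(\mathscr T)$ as the space of formal $E$-valued power series $h(z)=\sum_{\alpha}z^\alpha h_\alpha$ with $h_\alpha\in E$ such that, after decomposing each $h_\alpha$ along \eqref{j-kernel-2} as $h_\alpha=\sum_{F,u}h_{\alpha,u,F}$ with $h_{\alpha,u,F}\in\mathcal L_{u,F}$, the norm
\[
\|h\|^2 \;:=\; \sum_{F\in\mathscr P}\sum_{u\in\Omega_F}\sum_{\alpha\in\mathbb N^d}\, \frac{(\dep_u+\alpha)!}{\dep_u!}\,\frac{\prod_{j=0}^{|\alpha|-1}\mf c(|\dep_u|+j)}{\prod_{j=0}^{|\alpha|-1}(|\dep_u|+d+j)}\,\|h_{\alpha,u,F}\|_E^2
\]
is finite. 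The weights above are exactly computed by combining the relation $S^\alpha_{\lambdab_{\mf c}}f=\sqrt{K(|\alpha|,|\dep_u|)}\,S^\alpha_{\lambdab_{\mf c_a}}f$ (for $f\in\mathcal L_{u,F}$) with the explicit formula $\|S^\alpha_{\lambdab_{\mf c_a}}f\|^2=\frac{(\dep_u+\alpha)!}{\dep_u!}\cdot\frac{1}{\prod_{j=0}^{|\alpha|-1}(|\dep_u|+a+j)}\|f\|^2$ that underlies \cite[Theorem 5.2.6]{CPT}. The unitary $U:l^2(V)\to\mathscr H_{\mf c}(\mathscr T)$ is then declared by $U(S^\alpha_{\lambdab_{\mf c}}f)=z^\alpha f$ for $f\in E$, extended by orthogonality and linearity. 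By construction $U$ is isometric, and the intertwining $U S_j=\mathscr M_{z_j}U$ is automatic from $S_j\cdot S^\alpha_{\lambdab_{\mf c}}f=S^{\alpha+\epsilon_j}_{\lambdab_{\mf c}}f\mapsto z^{\alpha+\epsilon_j}f=z_j\cdot(z^\alpha f)$.

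It remains to verify three things. First, the elements of $\mathscr H_{\mf c}(\mathscr T)$ are genuine holomorphic functions on some nonempty ball $\mathbb B^d_r$: the boundedness of $\slamc{c}$ gives a uniform bound on the ratios $\|S_{\lambdab_{\mf c}}^{\alpha+\epsilon_j}f\|/\|S_{\lambdab_{\mf c}}^\alpha f\|$ (this uses both $\sup\mf c<\infty$ and the decay factor $(\dep_{v_j}+1)/(|\dep_v|+d)$), producing a positive common radius of convergence; this is the most delicate step, because one has to control the growth uniformly over all $(u,F)$. Second, evaluation at $z\in\mathbb B^d_r$ is a bounded linear map into $E$, so $\mathscr H_{\mf c}(\mathscr T)$ is a reproducing kernel Hilbert space; identifying the orthonormal-type basis $\{z^\alpha e\}$ with $e$ in a decomposition of $E$ along \eqref{j-kernel-2} and computing the dual basis gives the kernel in the standard form, yielding precisely \eqref{rk-formula}. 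Third, one checks that $P_{\mathcal L_{u,F}}$ appears correctly: since $\mathcal L_{u,F}\subseteq E$ and $E$-valued constant functions form the fiber at $0$, the block-diagonal structure over $(F,u)$ survives. The main obstacle I anticipate is the uniform convergence/radius bookkeeping in the first step, as the coefficients $\dep_u!/(\dep_u+\alpha)!$ vary with $u$ and must be balanced against the $\mf c$-dependent factors to extract a common $r>0$.
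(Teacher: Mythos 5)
Your overall route is the same as the paper's: adapt \cite[Theorem 5.2.6]{CPT}, use the perturbation Lemma \ref{effect-wts} with $\mf w(t)=\sqrt{\mf c(t)}\sqrt{(t+a)/(t+d)}$ to compare $\slamc{c}$ with $\slamc{c_a}$ blockwise on each $\mathcal L_{u,F}$, take the weighted $E$-valued power series space with norm $\sum_\alpha\|S^{\alpha}_{\lambdab_{\mf c}}f_\alpha\|^2$ (your explicit weights agree with \eqref{moment-gen}, via Lemma \ref{ortho-v} and \eqref{LuF-g-1}), and read off the kernel coefficients as the inverses $D_\alpha=(S^{*\alpha}_{\lambdab_{\mf c}}S^{\alpha}_{\lambdab_{\mf c}}|_E)^{-1}$ supplied by Lemma \ref{ortho-powers}(i). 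Your bootstrapping of the completeness of $\bigvee_\alpha S^\alpha_{\lambdab_{\mf c}}E$ from the $\mf c_a$ case (using $S^\alpha_{\lambdab_{\mf c}}f=\sqrt{K(|\alpha|,|\dep_u|)}\,S^\alpha_{\lambdab_{\mf c_a}}f$ and Remark \ref{slam-E-dual}) is a slightly more explicit version of what the paper defers to Step I of \cite[Theorem 5.2.6]{CPT}, and is fine.

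The genuine gap is the step you yourself flag and postpone: holomorphy of the elements and operator-valued convergence of \eqref{rk-formula} on a common ball $\mathbb B^d_r$. The mechanism you propose there is not the right one. Boundedness of $\slamc{c}$ only gives the upper bound $\|S^{\alpha+\epsilon_j}_{\lambdab_{\mf c}}f\|\Le\|S_j\|\,\|S^{\alpha}_{\lambdab_{\mf c}}f\|$, which goes the wrong way: to convert finiteness of $\sum_\alpha\|S^\alpha_{\lambdab_{\mf c}}f_\alpha\|^2$ into pointwise convergence of $\sum_\alpha z^\alpha f_\alpha$ and boundedness of evaluations, you need a \emph{lower} bound $\|S^\alpha_{\lambdab_{\mf c}}f\|^2\Ge w_{\alpha,u}\|f\|^2$ on each block, i.e. control of $\|D_\alpha\|$, which comes from the explicit formula \eqref{moment-gen}. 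Moreover, the per-block coefficient $\frac{\dep_u!}{(\dep_u+\alpha)!}\prod_{j=0}^{|\alpha|-1}\frac{|\dep_u|+d+j}{\mf c(|\dep_u|+j)}$ is not uniformly bounded over $u$ for fixed $\alpha$, so no ``uniform control over all $(u,F)$'' is available; what saves the argument is the hypothesis of finite joint branching index, which via Lemma \ref{lem-E-finite}(i) makes $\mathcal L_{u,F}\neq\{0\}$ for only finitely many $u$ per $F$, reducing \eqref{rk-formula} to a finite sum of blocks each converging on the ball of radius $\inf\mf c$ (the paper gets this from the multinomial estimate of \cite[Proof of Lemma 4.4]{GR}, and sets $r=\min\{\inf\mf c,1\}$). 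Your proposal never invokes this hypothesis in the convergence step, yet it cannot be dispensed with: Remark \ref{int-s-k} exhibits a bounded $\slamc{c_a}$ (binary tree times $\mathscr T_{1,0}$) for which $S^{*\alpha}_{\lambdab_{\mf c_a}}S^{\alpha}_{\lambdab_{\mf c_a}}|_E$ is not boundedly invertible and the model theorem fails, so any argument resting only on boundedness of the multishift cannot close this step.
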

\begin{remark} \label{graph-iso}
Note that the reproducing kernel Hilbert space 
$\mathscr H_{\mf c}(\mathscr T)$ is a module over the polynomial ring $\mathbb C[z_1, \ldots, z_d]$ $($see \eqref{HM}$)$.
Fix $j=1,2$. Let $\mathscr T^{(j)}= (V^{(j)}, 
\mathcal{E}^{(j)})$ be the directed Cartesian product of 
locally finite rooted directed trees 
$\mathscr T^{(j)}_1, \ldots, \mathscr T^{(j)}_d$
of finite joint branching index. 
Let $S^{(j)}_{\lambdab_{\mf c}}$ be a member of $\mathcal S_{\mathscr T^{(j)}}$.
It may be concluded from \cite[Remark 3.1.1]{CPT} that if $\mathscr T^{(1)}$ and $\mathscr T^{(2)}$ are graph isomorphic, then the multishifts $\slamc{c}^{(1)}$ and $\slamc{c}^{(2)}$ are unitarily equivalent. It follows that the Hilbert modules $\mathscr H_{\mf c}(\mathscr T^{(1)})$ and $\mathscr H_{\mf c}(\mathscr T^{(2)})$ are isomorphic in this case.
\end{remark}
\begin{proof}
We adapt the argument of \cite[Theorem 5.2.6]{CPT} to the present situation.
The verification of the first part is along the lines of Step I of the proof of \cite[Theorem 5.2.6]{CPT}.  Indeed, the space $\mathscr H_{\mf c}(\mathscr T)$ can be explicitly written as   
 \beqn
\mathscr H_{\mf c}(\mathscr T) = \Big\{F(z)=\sum_{\alpha \in \mathbb N^d}f_{\alpha} z^{\alpha} : f_{\alpha} \in E~(\alpha \in \mathbb N^d),~\sum_{\alpha \in \mathbb N^d}\|S^{\alpha}_{\lambdab_{\mf c}} f_{\alpha}\|^2 < \infty \Big\}
\eeqn
with inner product 
\beqn \label{inner-product}
\inp{F(z)}{G(z)}_{\mathscr H_{\mf c}(\mathscr T)} = \sum_{\alpha \in \mathbb N^d}\inp{S^{\alpha}_{\lambdab_{\mf c}} f_{\alpha}}{S^{\alpha}_{\lambdab_{\mf c}} g_{\alpha}}_{l^2(V)}, \quad F, G \in \mathscr H_{\mf c}(\mathscr T).
\eeqn
We leave the details to the interested reader. We also skip the routine verification of the fact that $\kappa_{\mathscr H_{\mf c}(\mathscr T)}$ is a reproducing kernel for $\mathscr H_{\mf c}(\mathscr T)$:
\beqn
\inp{F}{\kappa_{\mathscr H_{\mf c}}(\cdot, w)g} =\inp{F(w)}{g}_E, \quad \mbox{$F \in \mathscr H_{\mf c}(\mathscr T)$, ~$g \in E,$ ~$w \in \mathbb B^d_r.$}
\eeqn

Let us check that the series on the right hand side of  \eqref{rk-formula}  converges for any $z, w$ in some open ball centred at the origin in $\mathbb C^d.$
Note that the reproducing kernel $\kappa_{\mathscr H_{\mf c}}$ admits the orthogonal decomposition: \beqn \kappa_{\mathscr H_{\mf c}(\mathscr T)}(z, w) = \bigoplus_{\underset{F  \in \mathscr{P}}{u \in \Omega_F}} \kappa_{u, F}(z, w)P_{\mathcal L_{u, F}},
 \eeqn
where, for $F \in \mathscr P$ and $u \in \Omega_F$, 
\beqn
\kappa_{u, F}(z, w)=\sum_{\alpha \in \mathbb N^d}  \frac{\dep_{u}!}{(\dep_{u}+\alpha)!}\, {\prod_{j=0}^{|\alpha|-1}\frac{|\dep_u|+d + j}{\mf c(|\dep_u|+j)}}\,   z^{\alpha} \overline{w}^{\alpha}. 
\eeqn
Further, the domain of convergence of $\kappa_{u,F}(\cdot, w)$ contains the open ball of radius ${\inf \mf c}$ for fixed $w$ in the unit ball in $\mathbb C^d$.
Indeed, $\inf \mf c$ is positive since $\slamc{c}$ belongs to $\mathcal S_{\mathscr T}$, and hence for any $F \in \mathscr P$ and $u \in \Omega_F$, 
 \beqn
|\kappa_{u, F}(z, w)| & \Le &  \sum_{\alpha \in \mathbb N^d}  \frac{\dep_{u}!}{(\dep_{u}+\alpha)!}\, {\prod_{j=0}^{|\alpha|-1}{(|\dep_u|+d + j)}\frac{1}{(\inf \mf c)^{|\alpha|}}}\,   |z^{\alpha}| |{w}^{\alpha}|  
\\
\\ & \overset{(\star)}\Le & \frac{\dep_u!}{(|\dep_u|+d-1)!}\frac{1}{(1-{(\inf \mf c)^{-1}}\inp{z}{w})^d}|z^{\beta}||w^{\gamma}|,
\eeqn
where the inequality $(\star)$ can be deduced from the multinomial formula (see \cite[Proof of Lemma 4.4]{GR} for details), and for $j=1, \ldots, d$,
\beqn
\beta_j =\begin{cases} -\dep_{u_j} & \mbox{if~}z_j \neq 0, \\
0 & \mbox{otherwise}, \quad \quad
\end{cases}
\gamma_j =\begin{cases} -\dep_{u_j} & \mbox{if~}w_j \neq 0, \\
0 & \mbox{otherwise}.
\end{cases}
\eeqn
Also, since $\mathscr T$ is of finite joint branching index, by Lemma \ref{lem-E-finite}(i),  for any $F \in \mathscr P$,  $\mathcal L_{u, F} \neq \{0\}$ for at most finitely many $u \in \Omega_F$.
It follows that for fixed $w \in \mathbb B^d$,
the domain of convergence of $\kappa_{\mathscr H_{\mf c}(\mathscr T)}(\cdot, w)$
contains the open ball of radius $\inf \mf c$. If $r:=\min \{\inf \mf c, 1\},$ then the absolute convergence of $\kappa_{\mathscr H_{\mf c}(\mathscr T)}(z, w)$ for $z, w \in \mathbb B^d_r$ is now immediate from $${\kappa_{\mathscr H_{\mf c}(\mathscr T)}(z, w)} = \overline{\kappa_{\mathscr H_{\mf c}(\mathscr T)}(w, z)}, \quad z, w \in \mathbb B^d_r.$$ 

To see \eqref{rk-formula}, let $f \in \mathcal L_{u, F}$ for $F \in \mathscr P$ and $u \in \Omega_F$.
One can argue as in Step II of the proof of \cite[Theorem 5.2.6]{CPT} to obtain
\beq
\label{rk-D-alpha}
\kappa_{\mathscr H_{\mf c}(\mathscr T)}(z, w) = \sum_{\alpha \in \mathbb N^d} D_{\alpha} z^{\alpha} \overline{w}^{\alpha}, \quad z, w \in \mathbb B^d_r,
\eeq
where $D_{\alpha}$ is the inverse of $S^{*\alpha}_{\lambdab_{\mf c}} S^{\alpha}_{\lambdab_{\mf c}}|_E$ as ensured by Lemma \ref{ortho-powers}(i). 
On the other hand, as recorded in Step III of the proof of \cite[Theorem 5.2.6]{CPT}, 
\beq
\label{moment-Sca}
\|S^{\alpha}_{\lambdab_{\mf c_a}} e_u\|^{-2} =  \frac{\dep_{u}!}{(\dep_{u}+\alpha)!} {(|\dep_u|+a)(|\dep_u|+a+1) \cdots (|\dep_u| + a + |\alpha|-1)}, \quad u \in V. 
\eeq
This combined with \eqref{2.21} and \eqref{K(s, t)}  yields that \beq \label{moment-gen} \|S^{\alpha}_{\lambdab_{\mf c}}e_u\|^{-2} = \frac{\dep_{u}!}{(\dep_{u}+\alpha)!}\, {\prod_{j=0}^{|\alpha|-1}\frac{|\dep_u|+d + j}{\mf c(|\dep_u|+j)}}, \quad u \in V.\eeq
The desired expression in \eqref{rk-formula} is now immediate from \eqref{LuF-g-1}, Lemma \ref{ortho-v}(i) and \eqref{rk-D-alpha}.
\end{proof}
\begin{remark} \label{int-s-k}
The reproducing kernel $\kappa_{\mathscr H_{\mf c}(\mathscr T)}(z, w)$ can be obtained by integrating a family of scalar-valued reproducing kernels (cf. \eqref{c-D-A-k}) with respect to a finite family of spectral measures. Indeed, 
\beqn
\kappa_{\mathscr H_{\mf c}(\mathscr T)}(z, w) =  \sum_{{F  \in \mathscr{P}}} \int_{\Omega_F} \Big (\sum_{\alpha \in \mathbb N^d}  \frac{\dep_{u}!}{(\dep_{u}+\alpha)!}\, {\prod_{j=0}^{|\alpha|-1}\frac{|\dep_u|+d + j}{\mf c(|\dep_u|+j)}}\,   z^{\alpha} \overline{w}^{\alpha} \Big)   dP_F(u), \quad z, w \in \mathbb B^d_r,
\eeqn
where $P_F(\cdot),$ $F \in \mathscr P$ is the spectral measure given by
\beqn
P_F(\sigma) = \sum_{u \in \sigma} P_{\mathcal L_{u, F}}, \quad \sigma \in \mathscr P(\Omega_F).
\eeqn
Further, note that Theorem \ref{S-c-a-kernel} fails in dimension $d \Ge 2$ if we relax the assumption that $\mathscr T$ is of finite joint branching index. Indeed, if $d = 2$, $\mathscr T_1$ is the binary tree (see \cite[Section 4.3]{JJS}) and $\mathscr T_2$ is the rooted directed tree $\mathscr T_{1, 0}$ without any branching vertex, then $\mathcal L_{u, \{1\}} \neq \{0\}$ for  infinitely many $u \in \Omega_{\{1\}}$ $($cf. Lemma \ref{lem-E-finite}(i)$)$, and hence it may be concluded from Lemma \ref{ortho-v} and \eqref{moment-Sca} that $S^{*\alpha}_{\lambdab_{\mf c_a}} S^{\alpha}_{\lambdab_{\mf c_a}}|_E$ is not boundedly invertible for $\alpha =(1, 1)$. 
 It is worth noting that this phenomenon is not possible in dimension $d=1$ (see \cite[Proposition 3.1]{CPT-1}).
\end{remark}

From now onwards, the pair $(\mathscr{M}_{z},\mathscr{H}_{\mf c}(\mathscr T))$, 
 as obtained in Theorem \ref{S-c-a-kernel}, will be referred to as the {\it analytic model} 
 of the multishift $S_{\lambdab_{\mf c}}$ on $\mathscr T$. 
In case $\mf c= \mf c_a$, 
$a >0$ (see \eqref{c-a}), the multishift $S_{\lambdab_{\mf c_a}}$ will be referred to as {\it Drury-Arveson-type multishift on $\mathscr T$} (see \eqref{weights-0}). 
In case each directed tree $\mathscr T_j$ is isomorphic to $\mathbb N,$ $\slamc{c_1}$ is unitarily equivalent to the Drury-Arveson $d$-shift,  $\slamc{c_d}$ is unitarily equivalent to the Szeg$\ddot{\mbox{o}}$ $d$-shift, while $\slamc{c_{\tiny d+1}}$ is unitarily equivalent to the Bergman $d$-shift (refer to \cite{GR} for elementary properties of classical Drury-Arveson-type multishifts).
The analytic model for $\slamc{c_a}$ can be described as follows.

\begin{corollary}
\label{S-c-a-kernel-0}
Let $\mathscr T = (V,\mathcal E)$ be the directed Cartesian product of locally finite rooted directed trees $\mathscr T_1, \cdots, \mathscr T_d$ 
of finite joint branching index and
let $\slamc{c_a}$ be the Drury-Arveson-type multishift on $\mathscr{T}$. Let $E$ denote the joint kernel of $\slamc{c_a}^*$ and let $\mathcal{L}_{u,F}$ be as appearing in \eqref{j-kernel-2}. Then $\slamc{c_a}$ is unitarily equivalent to the multiplication $d$-tuple $\mathscr M_{z}=(\mathscr M_{z_1}, \cdots, \mathscr M_{z_d})$ on a reproducing kernel Hilbert space $\mathscr H_{\mf c_a}(\mathscr T)$ of $E$-valued holomorphic functions defined on the open unit ball $\mathbb B^d$ in $\mathbb C^d$. Further, the reproducing kernel $\kappa_{\mathscr H_{\mf c_a}(\mathscr T)} : \mathbb B^d \times \mathbb B^d \rar B(E)$ associated with $\mathscr H_{\mf c_a}(\mathscr T)$ is given by 
\beqn
 \kappa_{\mathscr H_{\mf c_a}(\mathscr T)}(z, w) =  \sum_{{F  \in \mathscr{P}}} \sum_{u \in \Omega_F} \Big (\sum_{\alpha \in \mathbb N^d}  \frac{\dep_{u}!}{(\dep_{u}+\alpha)!}\, {\prod_{j=0}^{|\alpha|-1}(|\dep_u|+a + j)}\,   z^{\alpha} \overline{w}^{\alpha} \Big) P_{\mathcal L_{u, F}}, \quad z, w \in \mathbb B^d,
\eeqn
where $P_{\mathcal L_{u, F}}$ is the orthogonal projection on $\mathcal{L}_{u,F}$.
\end{corollary}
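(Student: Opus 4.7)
The plan is to realize the statement as a direct specialization of Theorem~\ref{S-c-a-kernel} to the sequence $\mf c = \mf c_a$ defined in \eqref{c-a}, with only a minor upgrade of the domain of convergence of the kernel. First, I would verify that $\slamc{c_a}$ belongs to $\mathcal S_{\mathscr T}$: the sequence $\mf c_a(t) = (t+d)/(t+a)$ is monotone with limit $1$ at infinity and value $d/a$ at $t=0$, hence is bounded between the positive constants $\min\{1, d/a\}$ and $\max\{1, d/a\}$, so $\inf \mf c_a > 0$ and $\sup \mf c_a < \infty$ as required.

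Next, I would apply Theorem~\ref{S-c-a-kernel} and simplify the reproducing kernel \eqref{rk-formula} using the identity $\tfrac{|\dep_u|+d+j}{\mf c_a(|\dep_u|+j)} = |\dep_u|+a+j$, which reduces the relevant product to the telescoped form $\prod_{j=0}^{|\alpha|-1}(|\dep_u|+a+j)$. This yields the desired unitary equivalence and the stated kernel formula, at least on the ball $\mathbb B^d_r$ with $r = \min\{\inf \mf c_a, 1\}$.

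The only step requiring genuine work, and the main obstacle, is to extend the domain from $\mathbb B^d_r$ to the full open unit ball $\mathbb B^d$. This is non-trivial only when $a > d$ (so that $r = d/a < 1$); the generic estimate underlying Theorem~\ref{S-c-a-kernel} is too crude in that regime. To handle this, I would refine the convergence estimate by exploiting $\tfrac{\dep_u!}{(\dep_u+\alpha)!} \leqslant \tfrac{1}{\alpha!}$, which follows from $\binom{\dep_{u_j}+\alpha_j}{\dep_{u_j}} \geqslant 1$, and then apply the multinomial theorem to obtain
\[
|\kappa_{u,F}(z,w)| \,\leqslant\, \sum_{n=0}^{\infty} \frac{(|\dep_u|+a)_n}{n!} \Bigl(\sum_{j=1}^d |z_j w_j|\Bigr)^{n} = \Bigl(1 - \sum_{j=1}^d |z_j w_j|\Bigr)^{-(|\dep_u|+a)}
\]
whenever $\sum_{j=1}^d |z_j w_j| < 1$, a condition which by the Cauchy--Schwarz inequality holds for all $z, w \in \mathbb B^d$. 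Combined with Lemma~\ref{lem-E-finite}(i), which guarantees that only finitely many $\mathcal L_{u,F}$ are non-trivial under the finite joint branching index hypothesis, this yields absolute convergence of the operator-valued series on the whole of $\mathbb B^d \times \mathbb B^d$, completing the proof.
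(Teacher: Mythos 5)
Your proposal is correct, and its first two steps (checking $\inf \mf c_a = \min\{1, d/a\} > 0$, $\sup \mf c_a < \infty$, and simplifying $\tfrac{|\dep_u|+d+j}{\mf c_a(|\dep_u|+j)} = |\dep_u|+a+j$ in \eqref{rk-formula}) are exactly how the paper views the corollary: it is stated as an immediate specialization of Theorem \ref{S-c-a-kernel}, with the unit-ball domain taken for granted because it was already recalled from \cite[Theorem 5.2.6]{CPT} in Section 1.3. Where you genuinely add something is in noticing that the generic radius $r=\min\{\inf\mf c,1\}$ of Theorem \ref{S-c-a-kernel} equals $d/a<1$ when $a>d$, so the general estimate alone does not deliver the full ball, and then closing this gap internally: the bound $\tfrac{\dep_u!}{(\dep_u+\alpha)!}\Le\tfrac{1}{\alpha!}$ together with the multinomial theorem and the binomial series gives $|\kappa_{u,F}(z,w)|\Le\big(1-\sum_{j=1}^d|z_jw_j|\big)^{-(|\dep_u|+a)}$, valid on $\mathbb B^d\times\mathbb B^d$ by Cauchy--Schwarz, and finiteness of the number of nontrivial $\mathcal L_{u,F}$ (Lemma \ref{lem-E-finite}) finishes the convergence. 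This is the same sharpening that the original source carries out, so your argument makes the corollary self-contained in the present paper rather than resting on the external citation. One small point worth making explicit: convergence of the kernel on $\mathbb B^d$ must also be fed back into the model space itself, i.e. since $\|S^{\alpha}_{\lambdab_{\mf c_a}}e_u\|^{-2}$ obeys the same improved bound (see \eqref{moment-Sca}), a Cauchy--Schwarz argument on $\sum_\alpha f_\alpha z^\alpha$ shows every element of $\mathscr H_{\mf c_a}(\mathscr T)$ is holomorphic on all of $\mathbb B^d$, so the unitary of Theorem \ref{S-c-a-kernel} really implements the model on the unit ball and not merely on $\mathbb B^d_r$; this is routine but should be said.
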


We discuss here one more instance in which Theorem \ref{S-c-a-kernel} is applicable. Let $\slamc{c} \in \mathcal S_{\mathscr T}$. By Lemma \ref{dual-class}, the spherical Cauchy dual $\slamc{c}^{\mf s}$ of $\slamc{c}$ belongs to $\mathcal S_{\mathscr T}$. By Theorem \ref{S-c-a-kernel}, $\slamc{c}^{\mf s}$ admits an analytic model, say, $(\mathscr M_z, \mathscr H^{\mf s}_{\mf c}(\mathscr T))$. 
Sometimes, we refer to the Hilbert module $\mathscr H^{\mf s}_{\mf c}(\mathscr T)$ as the {\it Cauchy dual of the Hilbert module} $\mathscr H_{\mf c}(\mathscr T)$. 
It follows that the spherical Cauchy dual $d$-tuple $S^{\mf s}_{\lambdab_{\mf c_a}}$ of the Drury-Arveson-type multishift $S_{\lambdab_{\mf c_a}}$ on $\mathscr T$ is unitarily equivalent to the multiplication $d$-tuple $(\mathscr M_{z_1}, \cdots, \mathscr M_{z_d})$ on the reproducing kernel Hilbert space $\mathscr H^{\mf s}_{\mf c_a}(\mathscr T)$ of $E$-valued holomorphic functions defined on the open unit ball $\mathbb B^d$ in $\mathbb C^d$. Further, the reproducing kernel $\kappa_{\mathscr H^{\mf s}_{\mf c_a}(\mathscr T)} : \mathbb B^d \times \mathbb B^d \rar B(E)$ associated with $\mathscr H^{\mf s}_{\mf c_a}(\mathscr T)$ is given by 
\beqn
 \kappa_{\mathscr H^{\mf s}_{\mf c_a}(\mathscr T)}(z, w) =  \sum_{{F  \in \mathscr{P}}} \sum_{u \in \Omega_F} \Big (\sum_{\alpha \in \mathbb N^d}  \frac{\dep_{u}!}{(\dep_{u}+\alpha)!}\, {\prod_{j=0}^{|\alpha|-1}\frac{(|\dep_u|+d + j)^2}{(|\dep_u|+a+j)}}\,   z^{\alpha} \overline{w}^{\alpha} \Big) P_{\mathcal L_{u, F}}, \quad z, w \in \mathbb B^d.
\eeqn
The last formula is immediate from
\beq
\label{moment-dual} 
\|(S^{\mf s }_{\lambdab_{\mf c_a}})^\alpha f\|^{-2} = \frac{\dep_{u}!}{(\dep_{u}+\alpha)!}\, {\prod_{j=0}^{|\alpha|-1}\frac{(|\dep_u|+d + j)^2}{(|\dep_u|+a+j)}}~\|f\|^2, \quad f \in \mathcal L_{u, F}, ~u \in V, ~F \in \mathscr P, \quad
\eeq
which, in turn, can be derived from \eqref{moment-gen} and the fact that 
\beq
\label{cd-form}
\mbox{$S^{\mf s }_{\lambdab_{\mf c_a}}$ is of the form $S_{\lambdab_{\mf c}}$  with $\mf c(t)=\frac{t+a}{t+d},$ $t \in \mathbb N$.}
\eeq
\begin{remark}
\label{rmk-constant-kernel}
The joint kernel of $S^*_{\lambdab_{\mf c_a}}$ is same as that of $S^{\mf s*}_{\lambdab_{\mf c_a}}$. Thus,
in the model spaces $\mathscr H_{\mf c_a}(\mathscr T)$ and $\mathscr H^{\mf s}_{\mf c_a}(\mathscr T)$ of $S_{\lambdab_{\mf c_a}}$ and $S^{\mf s}_{\lambdab_{\mf c_a}}$ respectively, the subspaces of constant functions are same. Indeed, they are equal to the joint kernel of  $S^*_{\lambdab_{\mf c_a}}$ (see Remark \ref{slam-E-dual}).
\end{remark}

\subsection{Operator-valued representing measures}

In this subsection, we formally introduce the notion of an operator-valued representing measure for the Hilbert module $\mathscr H_{\mf c}(\mathscr T)$.
This is reminiscent of the well-studied notion of the Berger measure appearing in the study of subnormal operators in one and several variables (refer to \cite{Co}, \cite{JL} and \cite{C-Y}). The main result here provides a necessary and sufficient condition to ensure its existence and uniqueness. We conclude this section by computing explicitly the representing measures for Drury-Arveson-type Hilbert modules $\mathscr H_{\mf c_a}(\mathscr T),$ $a \Ge d$ and their Cauchy dual Hilbert modules $\mathscr H^{\mf s}_{\mf c_a}(\mathscr T),$ $a < d$. 

\begin{definition} 
\label{def-ir}
Let $\mathscr T = (V,\mathcal E)$ be the directed Cartesian product of locally finite rooted directed trees $\mathscr T_1, \cdots, \mathscr T_d$
of finite joint branching index. 
Let $S_{\lambdab_{\mf c}}$ be a multishift belonging to $\mathcal S_{\mathscr T}$
and let $(\mathscr{M}_{z},\mathscr{H}_{\mf c}(\mathscr T))$
 be the analytic model
 of the multishift $S_{\lambdab_{\mf c}}$ on $\mathscr T$. 
Let $E$ denote the joint kernel of $S^*_{\lambdab_{\mf c}}$ as described in \eqref{j-kernel-2}. 
We say that $\mathscr H_{\mf c}(\mathscr T)$ {\it admits a representing measure} if there exists a $B(E)$-valued product measure $\rho_{{}_{\mathscr T}} \times \nu_{{}_{\mathscr T}}=(\rho_{{}_u} \times \nu_{_u})_{_{u \in \Omega_F, F \in \mathscr P}}$ supported on $[0,b] \times \partial\mathbb{B}^d,$ $b >0$ such that the following hold:
\begin{enumerate}
\item[(i)](Integral representation) For any $f \in E$ and $\alpha \in \mathbb N^d,$
$$\|z^{\alpha} f\|^2_{\mathscr H_{\mf c}(\mathscr T)} = \displaystyle
 \int_{[0, b]}  \int_{\partial\mathbb{B}^d}  s^{|\alpha |} |z^{\alpha}|^2 \inp{d\rho_{{}_{\mathscr T}}(s) \times d\nu_{{}_{\mathscr T}}(z)  f}{f},$$
\item[(ii)](Diagonal measure) $\rho_{{}_u}$ and $\nu_{_u}$ are scalar-valued measures such that for any $g_{u, F} \in \mathcal L_{u, F},$
\beq \label{prod-measure-def}
  \displaystyle d\rho_{{}_{\mathscr{T}}} (s) \times d\nu_{{}_{\mathscr{T}}}(z)\Big(\sum_{{F  \in \mathscr{P}}} \sum_{u \in \Omega_{F}}  g_{u,F} \Big)   = \displaystyle
  \sum_{{F  \in 
 \mathscr{P}}} \sum_{u \in \Omega_{F}} d\rho_{{}_u}(s) d\nu_{{}_u}(z)  g_{u,F}, 
 \eeq
\item[(iii)](Normalization) $\rho_{{}_{\rootb}}$ and $\nu_{_{\rootb}}$ are probability measures.
\end{enumerate}
\end{definition}

The existence of a representing measure is connected to the Hausdorff moment problem (refer to \cite[Chapter 4]{Si} for the definition and basic theory of Hausdorff moment sequences).
\begin{theorem}\label{slice-rep-gen} 
Let $\mathscr T = (V,\mathcal E)$ be the directed Cartesian product of locally finite rooted directed trees $\mathscr T_1, \cdots, \mathscr T_d$ of finite joint branching index. 
Let $S_{\lambdab_{\mf c}}$ be a multishift belonging to $\mathcal S_{\mathscr T}$ and 
let 
$(\mathscr{M}_{z},\mathscr{H}_{\mf c}(\mathscr T))$ 
 be the analytic model of
the multishift $S_{\lambdab_{\mf c}}$ on $\mathscr T$.
Then the following statements are equivalent:
\begin{enumerate}
\item[(i)] $\mathscr H_{\mf c}(\mathscr T)$ admits a representing measure $\rho_{{}_{\mathscr T}} \times \nu_{{}_{\mathscr T}}=(\rho_{{}_u} \times \nu_{_u})_{_{u \in \Omega_F, F \in \mathscr P}}$ supported on $[0, \sup \mf c] \times \partial\mathbb{B}^d$.
\item[(ii)] The sequence $\{\mf a_n\}_{n \in \mathbb N}$ given below is a Hausdorff moment sequence: \beq \label{seq-an} \mf a_n = \begin{cases} 1 & \mbox{if~} n=0, \\ 
\prod_{j=0}^{n-1} \mf c(j) & \mbox{if~} n \Ge 1.
\end{cases}
\eeq
\end{enumerate}
If any of the above statements holds, then for $u \in \Omega_F$ and $F \in \mathscr P$,
the positive scalar-valued measures $\rho_{{}_u}$ and $\nu_{_u}$ 
are given by 
\beq \label{drho}
d\rho_u(s) &=& \frac{s^{|\dep_u|}}{\mf a_{|\dep_u|}} d\rho_{\rootb}(s), \\ \label{dnu} d\nu_{{}_u}(z) &=& \frac{|z^{\dep_u}|^2 }{\|z^{\dep_u}\|_{L^2(\partial \mathbb B^d, \sigma)}^2}  d\sigma(z), \eeq
where $\rho_{\rootb}$ is the representing measure of $\{\mf a_n\}_{n \in \mathbb N}$ supported on $[0, \sup \mf c]$ and $\sigma$ is the normalized surface area measure on $\partial\mathbb{B}^d$.
\end{theorem}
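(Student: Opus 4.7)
The proof plan rests on reducing the operator-valued integral representation to scalar identities, one for each building block $\mathcal L_{u, F}$, using the orthogonal decomposition \eqref{j-kernel-2} of $E$ and the moment formula \eqref{moment-gen}. First I would observe that because $\{S^{\alpha}_{\lambdab_{\mf c}} E\}_{\alpha \in \mathbb N^d}$ is mutually orthogonal by Lemma \ref{ortho-powers}(ii) and $S^{*\alpha}_{\lambdab_{\mf c}} S^{\alpha}_{\lambdab_{\mf c}}$ preserves each $\mathcal L_{u, F}$ by Lemma \ref{ortho-powers}(i) and Remark \ref{slam-E-dual}, the spaces $S^{\alpha}_{\lambdab_{\mf c}} \mathcal L_{u, F}$ are pairwise orthogonal as $(u, F)$ varies. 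Together with the product-measure/diagonal condition \eqref{prod-measure-def}, this reduces clause (i) of Definition \ref{def-ir} to the scalar identity
\begin{align*}
\|z^{\alpha} f\|^2_{\mathscr H_{\mf c}(\mathscr T)} = \|f\|^2 \int_{[0, b]} s^{|\alpha|} d\rho_u(s) \int_{\partial \mathbb B^d} |z^{\alpha}|^2 d\nu_u(z), \quad f \in \mathcal L_{u, F},
\end{align*}
for each $u \in \Omega_F$ and $F \in \mathscr P$.

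Next I would evaluate the left-hand side. Since every $v \in \sibi{F}{u}$ has depth $\dep_u$, Lemma \ref{ortho-v}(i) together with \eqref{moment-gen} yields $\|z^{\alpha} f\|^2_{\mathscr H_{\mf c}(\mathscr T)} = \|S^{\alpha}_{\lambdab_{\mf c}} f\|^2 = M_\alpha(u) \|f\|^2$, where
\begin{align*}
M_{\alpha}(u) = \frac{(\dep_u + \alpha)!}{\dep_u!}\,\prod_{j=0}^{|\alpha|-1} \frac{\mf c(|\dep_u| + j)}{|\dep_u| + d + j}.
\end{align*}
Thus the scalar identity collapses to matching $M_{\alpha}(u)$ against $\bigl(\int s^{|\alpha|} d\rho_u\bigr)\bigl(\int |z^{\alpha}|^2 d\nu_u\bigr)$.

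For the implication (i) $\Rightarrow$ (ii) I would specialize to $u = \rootb$, $F = \emptyset$, $f = e_{\rootb}$, so that $M_\alpha(\rootb) = \alpha!\,\mf a_{|\alpha|}/\prod_{j=0}^{|\alpha|-1}(d+j)$. Summing the corresponding scalar identity over all $\alpha$ with $|\alpha| = n$ against the multinomial weights $n!/\alpha!$ and using the identity $\sum_{|\alpha| = n} \tfrac{n!}{\alpha!} |z^{\alpha}|^2 = \|z\|^{2n} = 1$ on $\partial\mathbb B^d$ collapses the spherical integral (since $\nu_{\rootb}$ is a probability measure) and gives $\mf a_n = \int_0^b s^n\, d\rho_{\rootb}(s)$. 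Boundedness of $\{\mf a_n\}$ by $(\sup \mf c)^n$ then forces $\rho_{\rootb}$ to be supported in $[0, \sup \mf c]$, establishing that $\{\mf a_n\}_{n \in \mathbb N}$ is a Hausdorff moment sequence and simultaneously identifies $\rho_{\rootb}$. Uniqueness of $\nu_{\rootb}$ (as $\sigma$) and of each $\rho_u$, $\nu_u$ then falls out of the scalar identity applied to individual monomials, yielding \eqref{drho}--\eqref{dnu}.

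For (ii) $\Rightarrow$ (i), given that $\{\mf a_n\}$ is a Hausdorff moment sequence, I would take $\rho_{\rootb}$ to be its unique representing measure on $[0, \sup \mf c]$ (possible since $\mf a_n \leqslant (\sup \mf c)^n$), and define $\rho_u, \nu_u$ by the stated formulas \eqref{drho}--\eqref{dnu}. A direct check using $\int_0^b s^{|\dep_u|}\, d\rho_{\rootb}(s) = \mf a_{|\dep_u|}$ shows $\rho_u$ is a probability measure, and $\nu_u$ is one by construction; the normalization (iii) holds since $\dep_{\rootb} = \mathbf 0$ makes $\rho_{\rootb}$ unchanged and $\nu_{\rootb} = \sigma$. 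The key computation is then
\begin{align*}
\int s^{|\alpha|} d\rho_u(s) \cdot \int |z^{\alpha}|^2 d\nu_u(z) = \frac{\mf a_{|\alpha| + |\dep_u|}}{\mf a_{|\dep_u|}} \cdot \frac{\int_{\partial\mathbb B^d} |z^{\alpha + \dep_u}|^2 d\sigma}{\int_{\partial\mathbb B^d} |z^{\dep_u}|^2 d\sigma},
\end{align*}
where the first factor is $\prod_{j=0}^{|\alpha|-1} \mf c(|\dep_u|+j)$ by telescoping, and the second equals $\tfrac{(\dep_u + \alpha)!}{\dep_u!} \prod_{j=0}^{|\alpha|-1} (|\dep_u| + d + j)^{-1}$ by the classical sphere-moment formula $\int |z^{\beta}|^2 d\sigma = (d-1)!\beta!/(d-1+|\beta|)!$. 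Multiplying the two factors reproduces $M_\alpha(u)$, completing the verification. The main obstacle is bookkeeping this telescoping/sphere-moment computation cleanly and ensuring that the diagonal, $(u, F)$-indexed scalar measures are genuinely assembled into a well-defined operator-valued product measure consistent with \eqref{prod-measure-def}.
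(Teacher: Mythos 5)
Your proposal is correct and follows essentially the same route as the paper's proof: reduce to a scalar moment identity on each block $\mathcal L_{u,F}$ via the diagonal structure and orthogonality, match $\|S^{\alpha}_{\lambdab_{\mf c}}f\|^2$ (your $M_\alpha(u)$, the paper's \eqref{moment-f}) against the product of the $s$-moments of $\rho_u$ and the sphere moments of $\nu_u$ (Zhu's formula), and for (i) $\Rightarrow$ (ii) sum over $|\alpha|=n$ with multinomial weights at $f=e_{\rootb}$ to extract $\mf a_n=\int s^n d\rho_{\rootb}$. The only cosmetic difference is that you locate the support in $[0,\sup\mf c]$ directly from the bound $\mf a_n\Le(\sup\mf c)^n$, whereas the paper cites a lemma of Budzy\'nski--Jab{\l}o\'nski--Jung--Stochel to identify $b=\sup\mf c$; your treatment of uniqueness is as terse as the paper's own.
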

\begin{remark}
The measure $\rho_{{}_{\rootb}}$, as appearing in \eqref{drho}, turns out to be the Berger measure of the weighted shift $S_{\theta}$ on $\mathscr T^{\otimes}_{\rootb}$ associated with $\slamc{c}$, where $\mathscr T^{\otimes}_{\rootb}$ is the connected component of the tensor product $\mathscr T^{\otimes}$ of $\mathscr T_1, \ldots, \mathscr T_d$ that contains $\rootb$ (see \cite[Definitions 2.2.1 and 5.2.13]{CPT} for definitions of $\mathscr T^{\otimes}$ and $S_{\theta}$). 
This may be deduced from \cite[(5.26)]{CPT}.
The representing measure of $\mathscr H_{\mf c}(\mathscr T)$ can be considered as complex and operator-valued version of the Berger measure arising in the subnormality of commuting tuples (refer to \cite{JL}, \cite{C-Y}).
\end{remark}
\begin{proof}
It follows from Lemma \ref{ortho-v}, \eqref{LuF-g-1} and \eqref{moment-gen} that for $f \in \mathcal L_{u, F}$ and $\alpha \in \mathbb N^d,$
\beq \label{moment-f}
 \|S^{\alpha}_{\lambdab_{\mf c}}f\|^2_{l^2(V)}  &=& \notag
   \frac{(\alpha+\dep_u)!}{\dep_u !}\prod_{j=0}^{|\alpha| -1}\frac{\mf c(|\dep_u|+j)}{|\dep_u|+d+j} \|f\|^2_{l^2(V)}  \\ &\overset{\eqref{seq-an}}=&   \frac{\mf a_{|\alpha|+|\dep_u|}}{\mf a_{|\dep_u|}}\, Q(\dep_u, \alpha)\, \|f\|^2_{l^2(V)},
  \eeq
where
$$Q(\beta, \alpha) :=  \frac{(\alpha+\beta)!}{\beta !}\prod_{j=0}^{|\alpha| -1} \frac{1}{|\beta|+d+j}, \quad \alpha, \beta \in \mathbb N^d.$$ 
By \cite[Lemma 1.11]{Z},  for $\alpha, \beta \in \mathbb{N}^d$, we obtain 
\beqn \label{spherical}
\int_{\partial\mathbb{B}^d}  \frac{|z^{\alpha+ \beta}|^2}{\|z^{\beta}\|_{L^2(\partial \mathbb B^d, \sigma)}^2}  d\sigma(z)
= \frac{\|z^{\alpha+\beta}\|^2_{L^2(\partial \mathbb B^d, \sigma)}}{\|z^{\beta}\|^2_{L^2(\partial \mathbb B^d, \sigma)}}  =
 \frac{ (\alpha+\beta)! (d-1)!}{(|\beta|+|\alpha|+d-1)!}  \frac{(|\beta| + d -1 )!}{\beta !(d-1)!}
= Q(\beta, \alpha). 
\eeqn
It is now immediate from \eqref{moment-f} that 
\beq \label{moment-f-gen}
 \|S^{\alpha}_{\lambdab_{\mf c}}f\|^2_{l^2(V)} =
\frac{\mf a_{|\alpha|+|\dep_u|}}{\mf a_{|\dep_u|}}\Big(\int_{\partial\mathbb{B}^d}  \frac{|z^{\alpha+\dep_u}|^2}{\|z^{\dep_u}\|_{L^2(\partial \mathbb B^d, \sigma)}^2}  d\sigma(z)\Big) \|f\|^2_{l^2(V)}, \quad f \in \mathcal L_{u, F}.
\eeq

{\it (ii) $\Rightarrow$ (i)}:
Assume that (ii) holds. Thus there exists a probability measure $\mu_{\mf c}$ supported on a finite interval $[0, {b}]$ such that
\beq \label{an-moment}
\mf a_n = \int_{[0, b]} s^{n} d\mu_{\mf c}(s), ~n \in \mathbb N. 
\eeq
By \cite[Lemma 2]{BJJS} and \eqref{seq-an},  $b = \sup \mf c$.
It is easy to see from  \eqref{moment-f-gen} and \eqref{an-moment}  that for $f \in \mathcal L_{u, F}$,
\beq\label{slice-norm}
\|z^\alpha f\|^2_{\mathscr H_{\mf c}(\mathscr T)} = \|S^{\alpha}_{\lambdab_{\mf c}}f\|^2_{l^2(V)}   
= 
\Big(\int_{[0, b]}\int_{\partial\mathbb{B}^d}  s^{|\alpha |}   |z^{\alpha}|^2 {d\rho_{{}_u}(s) d\nu_{{}_u}(z) }\Big){\|f\|^2_{l^2(V)}}, \eeq
where $d\rho_{{}_u}$ and $d\nu_{{}_u}$ are as defined in \eqref{drho} (with $\rho_{\rootb}$ replaced by $\mu_{\mf c}$) and \eqref{dnu} respectively.  
 To see the integral representation of $\|z^\alpha f\|^2_{\mathscr H_{\mf c}(\mathscr T)}$ for arbitrary $f \in E,$   
note that by \eqref{j-kernel-2}, any $f \in  E$ is of the form  
\beqn
f= \sum_{{F  \in \mathscr{P}}} \sum_{u \in \Omega_{F}}  g_{u,F},  \quad g_{u,F} \in \mathcal{L}_{u,F}.
\eeqn 
By \eqref{prod-measure-def},
\beqn
\int_{[0, b]}  \int_{\partial\mathbb{B}^d}  s^{|\alpha |} |z^{\alpha}|^2 \inp{d\rho_{{}_{\mathscr T}} \times d\nu_{{}_{\mathscr T}}  f}{f}    &=& \sum_{{F  \in 
 \mathscr{P}}} \sum_{u \in \Omega_{F}} \Big(\int_{[0, b]}   \int_{\partial\mathbb{B}^d}  s^{|\alpha|} |z^{\alpha}|^2  d\rho_{{}_u}(s) d\nu_{{}_u}(z) \Big) \| g_{u,F}\|^2    \\
 &\overset{\eqref{slice-norm}}=&  \sum_{{F  \in 
 \mathscr{P}}} \sum_{u \in \Omega_{F}} \| z^{\alpha}g_{u,F}\|^2_{\mathscr H_{\mf c}(\mathscr T)} \\ &=& \| z^{\alpha}f\|^2_{\mathscr H_{\mf c}(\mathscr T)},
\eeqn
where we used the orthogonality of $\{z^{\alpha}g_{u, F} : F \in \mathscr P, u \in \Omega_F\}$ in the last equality (see Lemma \ref{ortho-v}).
This completes the proof of (ii) $\Rightarrow$ (i).

{\it (i) $\Rightarrow$ (ii)}: Assume that (i) holds. 
It may be concluded from \cite[(5.24) and (5.26)]{CPT} that 
\beq \label{sp-gen}
\sum_{\underset{|\alpha|=n}{\alpha \in \mathbb N^d}} \frac{n!}{\alpha!} \|S^{\alpha}_{\lambdab_{\mf c}} e_{v}\|^2_{l^2(V)}=\prod_{p=0}^{n-1} \mf c(|\dep_v| + p), \quad n \in \mathbb N,~
v \in V. 
\eeq
Letting $f= e_{\rootb}$ in the integral representation (see Definition \ref{def-ir}(i)), we obtain
for any $n \in \mathbb N,$ \beqn
\mf a_n &=& \prod_{p=0}^{n-1} \mf c(p)  \overset{\eqref{sp-gen}}= \sum_{\underset{|\alpha|=n}{\alpha \in \mathbb N^d}} \frac{n!}{\alpha!} \|z^{\alpha} e_{\rootb}\|^2_{\mathscr H_{\mf c}(\mathscr T)} \\ &=& 
\Big( \int_{[0, b]}  s^{n} d\rho_{{}_{\rootb}}(s) \Big) \int_{\partial\mathbb{B}^d} \sum_{\underset{|\alpha|=n}{\alpha \in \mathbb N^d}} \frac{n!}{\alpha!}  |z^{\alpha}|^2d\nu_{{}_{\rootb}}(z) \\ &=& \int_{[0, b]}  s^{n} d\rho_{{}_{\rootb}}(s), 
\eeqn
where we used the assumption that $\nu_{{}_{\rootb}}$ is a probability measure along with the multinomial theorem in the last equality. This completes the verification of (i) $\Rightarrow$ (ii). 

To see the uniqueness part, note that by \eqref{seq-an} and  \eqref{sp-gen}, the sequence $\{\mf a_n\}_{n \in \mathbb N}$ is uniquely determined by the action of $\slamc{c}$ on $e_{\rootb}$.
By the determinacy of the Hausdorff moment problem \cite[Theorem 4.17.1]{Si}, the probability measure $\rho_{\rootb}$ is unique. It now follows from \eqref{drho} and \eqref{dnu} that the representing measure $\rho_{{}_{\mathscr T}} \times \nu_{{}_{\mathscr T}}$ of $\mathscr H_{\mf c}(\mathscr T)$ is unique.
\end{proof}
 
Let us see two particular instances in which representing measures can be determined explicitly.
 
 \begin{corollary}\label{coro-1-slice-rep} 
Let $\mathscr T = (V,\mathcal E)$ be the directed Cartesian product of locally finite rooted directed trees $\mathscr T_1, \cdots, \mathscr T_d$ 
of finite joint branching index and
let 
$(\mathscr{M}_{z}, \mathscr{H}_{\mf c_a}(\mathscr T))$ 
 be the {analytic model} of
the Drury-Arveson-type multishift $\slamc{c_a}$ on $\mathscr T$. If $a$ is a positive integer such that $a \Ge d$,
then $\mathscr{H}_{\mf c_a}(\mathscr T)$ admits the representing measure $\rho_{{}_{\mathscr{T}}}  \times \nu_{{}_{\mathscr{T}}}=(\rho_{{}_u} \times \nu_{_u})_{_{u \in \Omega_F, F \in \mathscr P}}.$
In this case, for $u \in \Omega_F$ and $F \in \mathscr P,$ the positive scalar-valued measures $\rho_{{}_u}$ and $\nu_{_u}$ are given by 
\beq \label{drho-dnu}
d\rho_{{}_u}(s)  = \begin{cases} w_{|\dep_u |}(s) d{m}(s) & \mbox{if~}a > d, \\
d\delta_1(s) & \mbox{if~} a=d, \end{cases} \quad d\nu_{{}_u}(z)=\frac{|z^{\dep_u}|^2 }{\|z^{\dep_u}\|_{L^2(\partial \mathbb B^d, \sigma)}^2}  d\sigma(z), \eeq
where ${m}$ is the Lebesgue measure on $[0, 1]$, $\delta_1$ is the Borel probability measure supported at $\{1\}$, $\sigma$ is the normalized surface area measure on $\partial\mathbb{B}^d$, and
\beq \label{wt-fn} w_l(s) =  {(l+d)\cdots(l+a-1)} \displaystyle \sum_{i=d}^{a-1}\frac{s^{i + l-1}}{\displaystyle
\prod_{d \Le j \neq i \Le a-1}(j -i)}, \quad s \in [0,1], ~ l \in \mathbb N.
\eeq
\end{corollary}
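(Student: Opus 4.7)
The strategy is to invoke Theorem~\ref{slice-rep-gen} with $\mf c = \mf c_a$. Recall from \eqref{c-a} that $\mf c_a(t) = \frac{t+d}{t+a} = 1 - \frac{a-d}{t+a}$, which is nondecreasing on $\mathbb N$ with $\sup \mf c_a = 1$; hence the target support in Theorem~\ref{slice-rep-gen}(i) is precisely $[0,1] \times \partial\mathbb{B}^d$, matching the measures claimed in \eqref{drho-dnu}.

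First, I would verify hypothesis (ii) of Theorem~\ref{slice-rep-gen}, namely that
$$\mf a_n = \prod_{j=0}^{n-1}\frac{j+d}{j+a} = \frac{(a-1)!\,(d+n-1)!}{(d-1)!\,(a+n-1)!} = \frac{\Gamma(a)\,\Gamma(d+n)}{\Gamma(d)\,\Gamma(a+n)}$$
is a Hausdorff moment sequence on $[0, 1]$. When $a = d$, one has $\mf a_n \equiv 1$, with unique representing measure $\delta_1$. When $a > d$, the identity $\mf a_n = B(n+d,\, a-d)/B(d,\, a-d)$ identifies $\mf a_n$ as the $n$-th moment of the $\mathrm{Beta}(d,\, a-d)$ distribution on $[0,1]$, whose density is
$$\varphi(s) := \frac{(a-1)!}{(d-1)!\,(a-d-1)!}\, s^{d-1}(1-s)^{a-d-1}.$$

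Second, Theorem~\ref{slice-rep-gen} then yields a representing measure for $\mathscr{H}_{\mf c_a}(\mathscr T)$. The expression \eqref{dnu} for $d\nu_u$ coincides verbatim with the $\nu_u$ displayed in \eqref{drho-dnu}. For $d\rho_u$, formula \eqref{drho} gives
$$d\rho_u(s) = \frac{s^{|\dep_u|}}{\mf a_{|\dep_u|}}\, d\rho_{\rootb}(s),$$
which in the case $a = d$ collapses to $\delta_1$ (since $s^{|\dep_u|} \equiv 1$ on $\{1\}$), and in the case $a > d$ becomes $\frac{s^{|\dep_u|}}{\mf a_{|\dep_u|}}\,\varphi(s)\, dm(s)$. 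The remaining task is to recognize the latter as $w_{|\dep_u|}(s)\, dm(s)$ as defined in \eqref{wt-fn}.

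Third, that matching step reduces to the bookkeeping identity
$$\frac{s^l}{\mf a_l}\,\varphi(s) = (l+d)(l+d+1)\cdots(l+a-1) \sum_{i=d}^{a-1} \frac{s^{i+l-1}}{\prod_{d \le j \ne i \le a-1}(j-i)},$$
which I would establish from three ingredients: (a) the binomial expansion $s^{d-1}(1-s)^{a-d-1} = \sum_{i=d}^{a-1}\binom{a-d-1}{i-d}(-1)^{i-d} s^{i-1}$; (b) the elementary factorization $\prod_{d \le j \ne i \le a-1}(j-i) = (-1)^{i-d}(i-d)!\,(a-1-i)!$; and (c) the simplification $\frac{1}{\mf a_l} = \frac{(a+l-1)!\,(d-1)!}{(a-1)!\,(d+l-1)!}$, whose ratio against the leading coefficient $\frac{(a-1)!}{(d-1)!}$ of $\varphi$ collapses to $\frac{(a+l-1)!}{(d+l-1)!} = (l+d)\cdots(l+a-1)$. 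Nothing here is conceptually deep beyond Theorem~\ref{slice-rep-gen}; the only obstacle is the careful partial-fraction/binomial bookkeeping in this last step.
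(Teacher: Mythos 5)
Your proposal is correct and follows essentially the paper's route: apply Theorem \ref{slice-rep-gen} with $\mf c = \mf c_a$, split into the cases $a=d$ (where $\mf a_n \equiv 1$ gives $\delta_1$) and $a>d$, and then read off $\rho_u$ and $\nu_u$ from \eqref{drho} and \eqref{dnu}. The only deviation is that where the paper invokes \cite[Corollary 3.8]{AC} to recognize $\{\mf a_n\}$ as a Hausdorff moment sequence with density $w_0$ from \eqref{wt-fn}, you identify it directly as the moment sequence of the $\mathrm{Beta}(d,a-d)$ distribution and verify by the binomial/partial-fraction bookkeeping that its density coincides with $w_0$ (and that $s^l w_0(s)/\mf a_l = w_l(s)$), which is a correct, self-contained substitute for that citation.
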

\begin{proof} 
Suppose that $a$ is a positive integer such that $a \Ge d$.
By \eqref{c-a}, $\mf c_a(t) = \frac{t+d}{t+a},~t \in \mathbb N$, and hence by \eqref{seq-an},
\beq  \label{proof-an} \mf a_n = \begin{cases} 1 & \mbox{if~} n=0, \\ 
\prod_{j=0}^{n-1} \frac{j+d}{j+a} & \mbox{if~} n \Ge 1.
\end{cases}
\eeq
Note that in case $a=d$, $\mf a$ is the constant sequence with value $1.$ Clearly, it is a Hausdorff moment sequence with representing measure $\delta_1.$ Suppose that $a > d.$ Then 
\beqn  \mf a_n = 
\prod_{j=d}^{a-1} \frac{j}{n+j}, \quad n \in \mathbb N.
\eeqn
It follows from \cite[Corollary 3.8]{AC} that $\{\mf a_n\}_{n \in \mathbb N}$ is a Hausdorff moment sequence with representing measure $\rho_{{}_{\rootb}}$
being the weighted Lebesgue measure with weight
function $w_0 : [0, 1] \rar [0, \infty)$ as given by \eqref{wt-fn}.
Thus $$d\rho_{{}_{\rootb}}(s)=\begin{cases} d\delta_1 & \mbox{if~}a=d, \\ 
w_0(s)dm(s) & \mbox{if~}a > d. \end{cases}$$ It follows that
\beqn
d\rho_u(s) ~\overset{\eqref{drho}} =~ \frac{s^{|\dep_u|}}{\mf a_{|\dep_u|}} d\rho_{{}_{\rootb}}(s) 
~=~ \begin{cases}  \frac{s^{|\dep_u|}}{\mf a_{|\dep_u|}} d\delta_1 \overset{\eqref{proof-an}}= d\delta_1  & \mbox{if~}a=d, \\
\frac{s^{|\dep_u|}}{\mf a_{|\dep_u|}}w_0(s)dm(s) \overset{\eqref{wt-fn}}= ~w_{|\dep_u|}(s)dm(s) & \mbox{if~} a > d.
\end{cases}
\eeqn
The expression for $d\nu_{_u}$ in \eqref{drho-dnu} follows from \eqref{dnu}.
\end{proof}

\begin{corollary}\label{coro-2-slice-rep} 
Let $\mathscr T = (V,\mathcal E)$ be the directed Cartesian product of locally finite rooted directed trees $\mathscr T_1, \cdots, \mathscr T_d$ 
of finite joint branching index
and let 
$(\mathscr{M}_{z}, \mathscr{H}^{\mf s}_{\mf c_a}(\mathscr T))$ 
 be the analytic model of
the spherical Cauchy dual $\slamc{c_a}^{\mf s}$ of the Drury-Arveson-type multishift $\slamc{c_a}$ on $\mathscr T$.
If $a$ is a positive integer such that $a < d$, then $\mathscr{H}^{\mf s}_{\mf c_a}(\mathscr T)$ admits the representing measure $\rho_{{}_{\mathscr{T}}}  \times \nu_{{}_{\mathscr{T}}}=(\rho_{{}_u} \times \nu_{_u})_{_{u \in \Omega_F, F \in \mathscr P}}.$
In this case, for $u \in \Omega_F$ and $F \in \mathscr P,$ the positive scalar-valued measures $\rho_{{}_u}$ and $\nu_{_u}$ are given by 
\beq \label{drho-dnu-2}
d\rho_{{}_u}(s)  = \omega_{|\dep_u |}(s) d{m}(s), \quad d\nu_{{}_u}(z)=\frac{|z^{\dep_u}|^2 }{\|z^{\dep_u}\|_{L^2(\partial \mathbb B^d, \sigma)}^2}  d\sigma(z), \eeq
where ${m}$ is the Lebesgue measure on $[0, 1]$, $\sigma$ is the normalized surface area measure on $\partial\mathbb{B}^d$, and
\beq \label{wt-fn-2} \omega_{l}(s) =  {(l+a)\cdots(l+d-1)} \displaystyle \sum_{i=a}^{d-1}\frac{s^{i + l-1}}{\displaystyle
\prod_{a \Le j \neq i \Le d-1}(j -i)}, \quad s \in [0,1], ~ l \in \mathbb N.
\eeq
\end{corollary}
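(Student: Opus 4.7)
My plan is to mimic the proof of Corollary \ref{coro-1-slice-rep} almost verbatim, the only change being the identification of the Hausdorff moment sequence attached to the Cauchy dual. First I would appeal to Lemma \ref{dual-class} and formula \eqref{cd-form} to note that $\slamc{c_a}^{\mf s}$ belongs to $\mathcal S_{\mathscr T}$ and is precisely the multishift $S_{\lambdab_{\mf c}}$ with
$$\mf c(t) = \frac{t+a}{t+d}, \quad t \in \mathbb N.$$
Since $a < d$ (both positive integers), we have $\sup \mf c = 1$, so that Theorem \ref{slice-rep-gen} will deliver a measure supported on $[0,1]\times \partial \mathbb B^d$ once the Hausdorff moment condition is verified.

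Next I would compute, via \eqref{seq-an}, the associated scalar sequence
$$\mf a_n = \prod_{j=0}^{n-1}\frac{j+a}{j+d} = \prod_{j=a}^{d-1}\frac{j}{n+j}, \quad n \ge 1, \qquad \mf a_0 = 1,$$
where the second equality is an elementary telescoping rearrangement valid precisely because $a,d$ are positive integers with $a < d$. This brings $\mf a_n$ into exactly the product form treated in \cite[Corollary 3.8]{AC}, from which I would conclude that $\{\mf a_n\}_{n \in \mathbb N}$ is a Hausdorff moment sequence whose (unique) representing measure $\rho_{\rootb}$ on $[0, 1]$ is absolutely continuous with respect to Lebesgue measure with density $\omega_0(s)$ as defined by \eqref{wt-fn-2}.

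Having verified condition (ii) of Theorem \ref{slice-rep-gen}, existence of the representing measure $\rho_{{}_{\mathscr T}} \times \nu_{{}_{\mathscr T}}$ for $\mathscr H^{\mf s}_{\mf c_a}(\mathscr T)$ is immediate. To read off the explicit expressions claimed in \eqref{drho-dnu-2}, I would substitute into \eqref{drho}:
$$d\rho_u(s) = \frac{s^{|\dep_u|}}{\mf a_{|\dep_u|}}\, \omega_0(s)\, dm(s),$$
and then verify the algebraic identity
$$\frac{s^{|\dep_u|}}{\mf a_{|\dep_u|}}\, \omega_0(s) = \omega_{|\dep_u|}(s),$$
directly from the definition \eqref{wt-fn-2} of $\omega_l$ (the product $a(a+1)\cdots(a+|\dep_u|-1)$ in $\mf a_{|\dep_u|}^{-1}$ cancels against the corresponding coefficient in $\omega_0$ to produce $\omega_{|\dep_u|}$). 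The formula for $d\nu_u$ is then immediate from \eqref{dnu}.

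The only substantive ingredient beyond what was used for Corollary \ref{coro-1-slice-rep} is the telescoped product form above and the invocation of \cite[Corollary 3.8]{AC}; the hardest step, if any, is the algebraic matching between the weight produced by the Hausdorff problem and the stated $\omega_l$. Conceptually nothing new is needed beyond noticing that passing to the spherical Cauchy dual swaps the roles of $a$ and $d$ in the moment sequence, so the case $a < d$ for $\slamc{c_a}^{\mf s}$ mirrors the case $a > d$ for $\slamc{c_a}$.
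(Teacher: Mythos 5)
Your proposal is correct and follows essentially the same route as the paper: identify the moment sequence $\mf a_n=\prod_{j=0}^{n-1}\frac{j+a}{j+d}=\prod_{j=a}^{d-1}\frac{j}{n+j}$ via \eqref{cd-form} and \eqref{seq-an}, invoke \cite[Corollary 3.8]{AC} to get the weighted Lebesgue representing measure with density $\omega_0$, and then obtain \eqref{drho-dnu-2} from \eqref{drho} and \eqref{dnu} exactly as in Corollary \ref{coro-1-slice-rep}. The algebraic cancellation you describe, $\frac{s^{l}}{\mf a_{l}}\,\omega_0(s)=\omega_l(s)$, is precisely the step the paper delegates to the proof of Corollary \ref{coro-1-slice-rep}, and it checks out.
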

\begin{proof} 
Suppose that $a$ is a positive integer such that $a < d$.
By \eqref{cd-form} and \eqref{seq-an},
\beqn  \mf a_n = \begin{cases} 1 & \mbox{if~} n=0, \\ 
\displaystyle \prod_{j=0}^{n-1} \frac{j+a}{j+d} = \prod_{j=a}^{d-1} \frac{j}{n+j} & \mbox{if~} n \Ge 1.
\end{cases}
\eeqn
It follows from \cite[Corollary 3.8]{AC} that $\{\mf a_n\}_{n \in \mathbb N}$ is a Hausdorff moment sequence with representing measure 
being the weighted Lebesgue measure with weight
function $\omega_0 : [0, 1] \rar [0, \infty)$ as given by 
\eqref{wt-fn-2}.
The desired expressions in \eqref{drho-dnu-2} can now be derived as in the proof of Corollary \ref{coro-1-slice-rep}.
\end{proof}

\section{Proof of the main result}

In this section, we present a proof of Theorem \ref{main-thm}. We begin with a simple observation which reduces Problem \ref{prob} to the problem of unitary equivalence of representing measures arising from the Drury-Arveson-type Hilbert modules (as ensured by Corollaries \ref{coro-1-slice-rep} and \ref{coro-2-slice-rep}). 
\begin{lemma}\label{set-measure-lem}
Fix $j=1,2$. Let $\mathscr T^{(j)}= (V^{(j)}, 
\mathcal{E}^{(j)})$ be the directed Cartesian product of 
locally finite rooted directed trees 
$\mathscr T^{(j)}_1, \ldots, \mathscr T^{(j)}_d$ of finite joint branching index. 
Let $S^{(j)}_{\lambdab_{\mf c}}$ be a multishift belonging to $\mathcal S_{\mathscr T^{(j)}}$ and let $(\mathscr M^{(j)}_z, \mathscr H_{\mf c}(\mathscr T^{(j)}))$ be the analytic model of the multishift  $S^{(j)}_{\lambdab_{\mf c}}$ on $\mathscr T^{(j)}$. Let 
$E^{(j)}$ be the subspace of constant functions in $\mathscr H_{\mf c}(\mathscr T^{(j)})$.
If $\mathscr H_{\mf c}(\mathscr T^{(j)})$ admits the representing measure $\rho_{_{{\mathscr{T}^{(j)}}}}  \times \nu_{_{{\mathscr{T}^{(j)}}}}=(\rho^{(j)}_{_u} \times \nu^{(j)}_{_u})_{_{u \in \Omega_F, F \in \mathscr P}}$ supported on $[0, 1] \times \partial \mathbb B^d$, then
the Hilbert modules $\mathscr{H}_{\mf c}(\mathscr T^{(1)})$ and  $\mathscr{H}_{\mf c}(\mathscr T^{(2)})$ are isomorphic
if and only if there exists a unitary transformation $U : E^{(1)} \rar E^{(2)}$ such that 
\beq \label{4.1} \quad \quad \quad U \rho_{{}_{\mathscr{T}^{(1)}}} \times \nu_{{}_{\mathscr{T}^{(1)}}}(A) U^{*} =  \rho_{_{\mathscr{T}^{(2)}}} \times \nu_{_{\mathscr{T}^{(2)}}}(A) ~\mbox{for every Borel subset $A$ of $[0, 1] \times \partial \mathbb B^d$.} \quad \quad  \eeq 
\end{lemma}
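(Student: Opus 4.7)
The strategy is to prove the two implications separately, with the bridge provided by Definition \ref{def-ir}(i) and the identification of $E^{(j)}$ with the joint cokernel $\bigcap_{k=1}^{d}\ker\mathscr M^{(j)*}_{z_{k}}$ (cf.\ Remark \ref{rmk-constant-kernel} and the proof of Theorem \ref{S-c-a-kernel}). Throughout, I would use the consequences of the analytic model and of Lemma \ref{ortho-powers}: vectors of the form $z^{\alpha}f$ with $\alpha\in\mathbb{N}^{d}$ and $f\in E^{(j)}$ span a dense subspace of $\mathscr{H}_{\mf c}(\mathscr T^{(j)})$, the subspaces $z^{\alpha}E^{(j)}$ are mutually orthogonal, and $f\mapsto z^{\alpha}f$ is injective on $E^{(j)}$.

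For the necessity, suppose $\Phi:\mathscr{H}_{\mf c}(\mathscr T^{(1)})\to\mathscr{H}_{\mf c}(\mathscr T^{(2)})$ is a Hilbert module isomorphism. Taking adjoints in $\Phi\mathscr M^{(1)}_{z_{k}}=\mathscr M^{(2)}_{z_{k}}\Phi$ shows that $\Phi$ carries $E^{(1)}$ unitarily onto $E^{(2)}$; set $U:=\Phi|_{E^{(1)}}$. Intertwining then forces $\Phi(z^{\alpha}f)=z^{\alpha}Uf$ for every $\alpha\in\mathbb{N}^{d}$ and $f\in E^{(1)}$. Substituting this identity into $\|z^{\alpha}f\|^{2}_{\mathscr{H}_{\mf c}(\mathscr T^{(1)})}=\|z^{\alpha}Uf\|^{2}_{\mathscr{H}_{\mf c}(\mathscr T^{(2)})}$, invoking the integral representation of Definition \ref{def-ir}(i), and polarizing in $f$ yield, for all $\alpha\in\mathbb{N}^{d}$ and $f,g\in E^{(1)}$,
\beqn
\int s^{|\alpha|}|z^{\alpha}|^{2}\langle d\rho_{_{\mathscr T^{(1)}}}\!\times d\nu_{_{\mathscr T^{(1)}}}\,f,g\rangle = \int s^{|\alpha|}|z^{\alpha}|^{2}\langle U^{*}(d\rho_{_{\mathscr T^{(2)}}}\!\times d\nu_{_{\mathscr T^{(2)}}})U\,f,g\rangle.
\eeqn
To upgrade this moment equality to the measure equality \eqref{4.1}, I would appeal to the explicit description of the representing measures furnished by Theorem \ref{slice-rep-gen}: each scalar measure $\rho_{u}$ is a weighted Hausdorff-moment measure on $[0,1]$ (hence determined by its $s^{n}$-moments), while $\nu_{u}$ is the torus-invariant measure $|z^{\dep_{u}}|^{2}\|z^{\dep_{u}}\|^{-2}d\sigma$, whose pushforward to the probability simplex is determined by its $|z^{\alpha}|^{2}$-moments. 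This final measure-uniqueness step is the main technical obstacle, because the conjugated measure $U^{*}(\rho_{_{\mathscr T^{(2)}}}\!\times\nu_{_{\mathscr T^{(2)}}})U$ need not be diagonal with respect to the decomposition \eqref{j-kernel-2} of $E^{(1)}$; the comparison therefore has to be executed on the scalar measures $\langle\,\cdot\,f,g\rangle$ with $f,g$ taken from an orthonormal basis compatible with \eqref{j-kernel-2}, exploiting the product structure of both measures to separate the $s$ and $z$ variables before invoking the two determinacy facts above.

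Conversely, assume a unitary $U:E^{(1)}\to E^{(2)}$ satisfying \eqref{4.1} is given. The plan is to define
\beqn
\Phi\Big(\sum_{\alpha}z^{\alpha}f_{\alpha}\Big) := \sum_{\alpha}z^{\alpha}Uf_{\alpha}
\eeqn
on the dense subspace of $\mathscr{H}_{\mf c}(\mathscr T^{(1)})$ consisting of finite sums of monomials with $E^{(1)}$-coefficients. The orthogonality and injectivity statements in Lemma \ref{ortho-powers} ensure that $\Phi$ is well-defined, and the integral representation combined with the intertwining hypothesis \eqref{4.1} gives
\beqn
\|\Phi F\|^{2}_{\mathscr{H}_{\mf c}(\mathscr T^{(2)})} = \sum_{\alpha}\!\int\!s^{|\alpha|}|z^{\alpha}|^{2}\langle d\rho_{_{\mathscr T^{(2)}}}\!\times d\nu_{_{\mathscr T^{(2)}}}\,Uf_{\alpha},Uf_{\alpha}\rangle = \sum_{\alpha}\!\int\!s^{|\alpha|}|z^{\alpha}|^{2}\langle d\rho_{_{\mathscr T^{(1)}}}\!\times d\nu_{_{\mathscr T^{(1)}}}\,f_{\alpha},f_{\alpha}\rangle = \|F\|^{2}_{\mathscr{H}_{\mf c}(\mathscr T^{(1)})},
\eeqn
so that $\Phi$ extends to an isometry. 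Running the symmetric construction with $U^{-1}$ in place of $U$ produces its two-sided inverse, so $\Phi$ is unitary, and $\Phi\mathscr M^{(1)}_{z_{k}}=\mathscr M^{(2)}_{z_{k}}\Phi$ holds by the very definition of $\Phi$, completing the argument.
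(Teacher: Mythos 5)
Your architecture coincides with the paper's: for necessity you restrict the module isomorphism to the joint cokernels and compare moments, and for sufficiency your coefficient-wise map $\sum_{\alpha} z^{\alpha}f_{\alpha} \mapsto \sum_{\alpha} z^{\alpha}Uf_{\alpha}$ is exactly the paper's unitary $(\tilde{U}f)(z)=U(f(z))$ from \eqref{unitary}; your verification of isometry and intertwining via the integral representation, \eqref{4.1} and the mutual orthogonality of the subspaces $z^{\alpha}E^{(j)}$ (Lemma \ref{ortho-powers}) is complete and matches the paper's (terser) argument.

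The gap is in the necessity direction, at exactly the step you yourself label ``the main technical obstacle'': you never actually pass from the moment identity to the measure identity \eqref{4.1}. The paper closes this step by observing that $A \mapsto U^{*}\,\rho_{{}_{\mathscr T^{(2)}}}\times\nu_{{}_{\mathscr T^{(2)}}}(A)\,U$ again reproduces the norms $\|z^{\alpha}f\|^{2}_{\mathscr H_{\mf c}(\mathscr T^{(1)})}$ and then appealing to the uniqueness assertion of Theorem \ref{slice-rep-gen}; your concern that the conjugated measure need not be diagonal with respect to \eqref{j-kernel-2} is a fair reason to want more detail, but the substitute you sketch does not work as stated. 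The only moments at your disposal are those against the functions $s^{|\alpha|}|z^{\alpha}|^{2}$, in which the exponent of $s$ is locked to $|\alpha|$; hence you cannot ``separate the $s$ and $z$ variables'' and invoke Hausdorff determinacy on $[0,1]$ and sphere-moment determinacy on $\partial\mathbb B^{d}$ independently of one another. To make your route close you would have to use the explicit form \eqref{drho}--\eqref{dnu}: each scalar measure $\inp{\cdot\, f}{g}$ occurring on either side is a finite linear combination of the measures $\rho_{v}\times\nu_{v}$, whose densities with respect to $\rho_{{}_{\rootb}}\times\sigma$ are (up to constants) the degree-matched monomials $s^{|\dep_v|}|z^{\dep_v}|^{2}$, and then run a coefficient-comparison argument of the kind the paper carries out later in Proposition \ref{level-eq} -- or, alternatively, verify that the conjugated measure is itself a representing measure in the sense of Definition \ref{def-ir}, so that the uniqueness part of Theorem \ref{slice-rep-gen} applies, which is the paper's shorter route. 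As it stands, the necessity half of your proposal is a plan with its decisive step unproved.
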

\begin{proof}
Suppose that $\mathscr H_{\mf c}(\mathscr T^{(j)})$ admits the representing measure $\rho_{_{{\mathscr{T}^{(j)}}}}  \times \nu_{_{{\mathscr{T}^{(j)}}}}$ supported on $[0, 1] \times \partial \mathbb B^d$, $j=1,2.$
Let $U : \mathscr{H}_{\mf c}(\mathscr T^{(1)}) \rar  \mathscr{H}_{\mf c}(\mathscr T^{(2)})$ be a unitary map such that 
\beq\label{intw} 
U\mathscr{M}^{(1)}_{z_j} U^{*} = \mathscr{M}^{(2)}_{z_j}, \quad j=1, \ldots, d.
\eeq It follows that 
\beq \label{ker-to-ker}
\mbox{$U$ maps $E^{(1)}=\displaystyle \bigcap_{j=1}^d \ker \mathscr M^{(1)^*}_{z_j}$ unitarily onto $E^{(2)}=\displaystyle \bigcap_{j=1}^d  \ker \mathscr M^{(2)^*}_{z_j}.$}
\eeq 
Thus, if $f \in E^{(1)}$, then $Uf \in E^{(2)}$, and hence
\beqn
 \int_{0}^{1} \int_{\partial\mathbb{B}^d}  s^{|\alpha |}|z^{\alpha}|^2 \inp{ d\rho_{{}_{\mathscr{T}^{(1)}}}(s) \times d\nu_{{}_{\mathscr{T}^{(1)}}}(z) f}{f} &=&  \|z^{\alpha}f\|^2_{\mathscr{H}_{\mf c}(\mathscr T^{(1)})}  =  \|U (\mathscr{M}^{(1)}_{z,a})^{\alpha}f\|^2_{\mathscr{H}_{\mf c}(\mathscr T^{(2)})} 
                      \\ &=&  \|(\mathscr{M}^{(2)}_{z,a})^{\alpha}U f\|^2_{\mathscr{H}_{\mf c}(\mathscr T^{(2)})} 
                    =  \|z^{\alpha}U f\|^2_{\mathscr{H}_{\mf c}(\mathscr T^{(2)})} 
                      \\ &=& \int_{0}^{1} \int_{\partial\mathbb{B}^d}  s^{|\alpha |}|z^{\alpha}|^2 \inp{ d\rho_{{}_{\mathscr{T}^{(2)}}}(s) \times d\nu_{{}_{\mathscr{T}^{(2)}}}(z) Uf}{Uf} \\
                      &=&  \int_{0}^{1} \int_{\partial\mathbb{B}^d}  s^{|\alpha |}|z^{\alpha}|^2 \inp{ U^* d\rho_{{}_{\mathscr{T}^{(2)}}}(s) \times d\nu_{{}_{\mathscr{T}^{(2)}}}(z) Uf}{f}.
\eeqn 
By uniqueness of the representing measure (see Theorem \ref{slice-rep-gen}), we obtain the necessary part.

To see the converse, assume that \eqref{4.1} holds for a unitary transformation $U : E^{(1)} \rar E^{(2)}$.
Define $\tilde{U} : \mathscr{H}_{\mf c}(\mathscr T^{(1)}) \rar \mathscr{H}_{\mf c}(\mathscr T^{(2)})$ by \beq \label{unitary} (\tilde{U}f)(z)=U(f(z)), \quad f \in \mathscr{H}_{\mf c}(\mathscr T^{(1)}), ~z \in \mathbb B^d.\eeq 
It is easy to see using \eqref{4.1} that $\tilde{U}$ is a unitary map. Also, it is a routine matter to verify that $$\tilde{U}\mathscr M^{(1)}_{z_j} = \mathscr M^{(2)}_{z_j}\tilde{U}, \quad j=1, \ldots, d.$$ This completes the proof.
\end{proof}

The following rather technical result says that any Hilbert module isomorphism between two Drury-Arveson-type Hilbert modules preserves the orthogonal decomposition \eqref{j-kernel-2} of joint cokernels of associated multiplication tuples over each generation.
\begin{proposition}\label{level-eq} 
Let $a, d$ be positive integers such that $ad \neq 1,$ and fix $j=1,2$. Let $\mathscr T^{(j)}= (V^{(j)}, 
\mathcal{E}^{(j)})$ be the directed Cartesian product of 
locally finite rooted directed trees 
$\mathscr T^{(j)}_1, \ldots, \mathscr T^{(j)}_d$ of finite joint branching index. 
Let $\rootb^{(j)}$ denote the root of $\mathscr T^{(j)}$ and
let $\mathscr H_{\mf c_a}(\mathscr T^{(j)})$ be the Drury-Arveson-type Hilbert module associated with $\mathscr T^{(j)}$. Let 
$E^{(j)}$ be the subspace of constant functions in $\mathscr H_{\mf c_a}(\mathscr T^{(j)})$
and let $\mathcal{L}^{(j)}_{u,F}$ be as appearing in the decomposition \eqref{j-kernel-2} of $E^{(j)}$.  Suppose there exists a Hilbert module isomorphism $U : \mathscr{H}_{\mf c_a}(\mathscr T^{(1)}) \rar  \mathscr{H}_{\mf c_a}(\mathscr T^{(2)})$.  Then, for any $\alpha \in \mathbb N^d$ and $F \in \mathscr P$, 
\beqn U~\mbox{ maps~} \displaystyle \bigoplus_{\underset{\dep_u=\alpha}{u \in \Omega_{F}^{(1)}}} \mathcal{L}^{(1)}_{u,F}~ \mbox{onto~} \displaystyle \bigoplus_{\underset{\dep_v=\alpha}{v\in \Omega_{F}^{(2)}}} \mathcal{L}^{(2)}_{v,F}, \eeqn
where we used the convention that orthogonal direct sum over empty collection is $\{0\}.$  
In particular, $U$ maps $e_{\rootb^{(1)}}$ to a unimodular scalar multiple of $e_{\rootb^{(2)}}$.  
\end{proposition}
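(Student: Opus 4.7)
The plan is to exploit the fact that a Hilbert module isomorphism intertwines not only $\mathscr M_{z_k}$ but also the full commuting family $\{\mathscr M^{*\gamma}\mathscr M^{\gamma}\}_{\gamma \in \mathbb N^d}$, and to read off the multi-depth $\dep_u$ from the joint spectrum of these positive operators restricted to $E^{(j)}$. First I would note that, since $U$ is unitary with $U\mathscr M^{(1)}_{z_k} = \mathscr M^{(2)}_{z_k}U$, one has $U\mathscr M^{(1)*}_{z_k} = \mathscr M^{(2)*}_{z_k}U$, and hence $U$ carries $E^{(1)}$ unitarily onto $E^{(2)}$. By Lemma~\ref{ortho-powers}(i), each $E^{(j)}$ is invariant under $A^{(j)}_{\gamma} := \mathscr M^{(j)*\gamma}\mathscr M^{(j)\gamma}$, and the restrictions $A^{(j)}_\gamma|_{E^{(j)}}$ are intertwined by $U|_{E^{(1)}}$.

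Next I would compute $A^{(j)}_\gamma$ on each summand $\mathcal L^{(j)}_{u, F}$ of \eqref{j-kernel-2}. Transporting to the multishift $\slamc{c_a}$ via Corollary~\ref{S-c-a-kernel-0} and combining Lemma~\ref{ortho-v}(i) with \eqref{moment-Sca}, one sees that $\slamc{c_a}^{*\gamma}\slamc{c_a}^{\gamma}$ is diagonal on $\{e_v\}$ with eigenvalue $\|\slamc{c_a}^{\gamma} e_v\|^2$ depending only on $\dep_v$; since every vertex in $\sibi{F}{u}$ shares the common multi-depth $\dep_u$, it follows that for each $f \in \mathcal L^{(j)}_{u, F}$,
\[
A^{(j)}_\gamma f \;=\; \mu(\gamma, \dep_u)\, f, \qquad \mu(\gamma, \beta) \;:=\; \frac{(\beta + \gamma)!}{\beta!}\prod_{j=0}^{|\gamma|-1}\frac{1}{|\beta|+a+j}.
\]
Regrouping \eqref{j-kernel-2} by depth, the subspace
\[
W^{(j)}_\alpha \;:=\; \bigoplus_{F \in \mathscr P}\ \bigoplus_{\substack{u \in \Omega^{(j)}_F \\ \dep_u = \alpha}} \mathcal L^{(j)}_{u, F}
\]
therefore lies inside the joint $(\mu(\gamma, \alpha))_\gamma$-eigenspace of $\{A^{(j)}_\gamma\}_\gamma$.

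The main obstacle, and the point where the hypothesis $ad \neq 1$ becomes indispensable, is to prove that $\alpha \mapsto (\mu(\gamma, \alpha))_\gamma$ is injective on $\mathbb N^d$. For $d \geq 2$, I would compare $\gamma = \epsilon_k$ with $\gamma = \epsilon_k + \epsilon_l$ for $k \neq l$ and use
\[
\frac{\mu(\epsilon_k + \epsilon_l, \alpha)}{\mu(\epsilon_k, \alpha)\,\mu(\epsilon_l, \alpha)} \;=\; \frac{|\alpha|+a}{|\alpha|+a+1},
\]
which is strictly monotone in $|\alpha|$ and so determines $|\alpha|$; each $\alpha_k$ is then recovered from $\mu(\epsilon_k, \alpha)(|\alpha|+a) = \alpha_k+1$. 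For $d = 1$, the standing hypothesis forces $a \geq 2$, and $\mu(\epsilon_1, \alpha_1) = 1 - (a-1)/(\alpha_1+a)$ is itself strictly monotone in $\alpha_1$ (while at $a = d = 1$ the map is identically $1$, which is exactly the degeneracy reflected in Remark~\ref{rmk-vW}). Injectivity, together with the orthogonality of the $W^{(j)}_\alpha$ and the orthogonal decomposition $E^{(j)} = \bigoplus_\alpha W^{(j)}_\alpha$ inherited from \eqref{j-kernel-2}, forces $W^{(j)}_\alpha$ to coincide with the full $(\mu(\gamma,\alpha))_\gamma$-joint eigenspace, so the intertwining yields $U(W^{(1)}_\alpha) = W^{(2)}_\alpha$.

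To conclude, I would invoke \eqref{phi-F-eqn}: for $u \in \Omega_F \subseteq \Phi_F$ one has $u_j = \rootb_j$ iff $j \notin F$, so $\dep_u = \alpha$ with $u \in \Omega_F$ is possible only for $F = \{j : \alpha_j \neq 0\}$ (or $F = \emptyset$ with $u = \rootb$ when $\alpha = 0$). Thus $W^{(j)}_\alpha$ collapses to a single $F$-summand, and for every other $F$ both sides of the asserted identity reduce to $\{0\}$ by the stated convention; the identity therefore holds for every pair $(\alpha, F)$. Taking $\alpha = 0$ gives $\mathcal L^{(j)}_{\rootb^{(j)}, \emptyset} = [e_{\rootb^{(j)}}]$, and unitarity of $U$ forces $U e_{\rootb^{(1)}}$ to be a unimodular scalar multiple of $e_{\rootb^{(2)}}$.
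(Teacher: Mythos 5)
Your proposal is correct, but it proves the proposition by a genuinely different route than the paper. The paper's proof goes through the operator-valued representing measures: it first reduces to the case $a \Ge d$ by passing to the spherical Cauchy dual module (using Remark \ref{slam-E-dual} and Corollaries \ref{coro-1-slice-rep}, \ref{coro-2-slice-rep}), then invokes Lemma \ref{set-measure-lem} to transfer the intertwining to the measures, and finally recovers first $|\dep_u|$ by comparing the radial densities $w_{|\dep_u|}(s)$ and then the full multi-index $\dep_u$ by an evaluation argument with the spherical densities $|z^{\dep_u}|^2/\|z^{\dep_u}\|^2_{L^2(\partial\mathbb B^d,\sigma)}$ at boundary points with vanishing coordinates. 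You instead bypass measures entirely: you read off $\dep_u$ from the joint eigenvalues of the commuting positive operators $\mathscr M^{*\gamma}\mathscr M^{\gamma}|_{E}$, which by Lemma \ref{ortho-v}(i), Lemma \ref{ortho-powers}(i) and \eqref{moment-Sca} act as the scalar $\mu(\gamma,\dep_u)$ on each $\mathcal L_{u,F}$ (all vertices of $\sibi{F}{u}$ share the multi-depth $\dep_u$), and your injectivity computation — the ratio $\mu(\epsilon_k+\epsilon_l,\alpha)/\bigl(\mu(\epsilon_k,\alpha)\mu(\epsilon_l,\alpha)\bigr)=(|\alpha|+a)/(|\alpha|+a+1)$ for $d\Ge 2$, and monotonicity of $(\alpha_1+1)/(\alpha_1+a)$ for $d=1$, $a\Ge 2$ — isolates exactly the degenerate case $ad=1$ flagged in Remark \ref{rmk-vW}. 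The final reduction of $W_\alpha$ to a single $F$-summand via \eqref{phi-F-eqn} matches the paper's observation. What your approach buys: it is more elementary (finite-dimensional spectral theory on $E$ rather than uniqueness of Hausdorff moment measures), it avoids the case split $a\Ge d$ versus $a<d$ and the detour through the Cauchy dual, and it nowhere uses integrality of $a$, so it actually establishes this proposition for all real $a>0$ with $ad\neq 1$; what the paper's route buys is that the same representing-measure machinery is reused for the converse implication (ii) $\Rightarrow$ (i) of Theorem \ref{main-thm}, where integrality of $a$ genuinely enters.
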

\begin{proof}
Note that two joint left-invertible tuples are unitarily equivalent if and only if their spherical Cauchy dual $d$-tuples are unitarily equivalent. 
It follows that the Hilbert modules $\mathscr H_{\mf c}(\mathscr T^{(1)})$ and $\mathscr H_{\mf c}(\mathscr T^{(2)})$ are isomorphic if and only if their Cauchy dual Hilbert modules $\mathscr H^{\mf s}_{\mf c}(\mathscr T^{(1)})$ and $\mathscr H^{\mf s}_{\mf c}(\mathscr T^{(2)})$ are isomorphic.
Since $\mathscr H_{\mf c_a}(\mathscr T)~(a \Ge d)$ and $\mathscr H^{\mf s}_{\mf c_a}(\mathscr T)~(a < d)$ admit representing measures (Corollaries \ref{coro-1-slice-rep}  and \ref{coro-2-slice-rep}), in view of Remark \ref{slam-E-dual}, it suffices to treat the case in which $a \Ge d$.

Suppose that $a \Ge d.$ Then, by Corollary \ref{coro-1-slice-rep}, $\mathscr H_{\mf c_a}(\mathscr T^{(j)})$ admits the representing measure $\rho_{_{{\mathscr{T}^{(j)}}}}  \times \nu_{_{{\mathscr{T}^{(j)}}}}=(\rho^{(j)}_{_u} \times \nu^{(j)}_{_u})_{_{u \in \Omega_F, F \in \mathscr P}}$ as given by \eqref{drho-dnu}.
Let $f \in \mathcal{L}^{(1)}_{u,F}$ for some $u\in \Omega^{(1)}_{F}$ and 
 $F \in \mathscr P$.  
Then, by \eqref{ker-to-ker}, $U$ maps $E^{(1)}$ into $E^{(2)}$, and hence
 \beq
 \label{Uf}
Uf= \sum_{F  \in \mathscr{P}} \sum_{v \in \Omega_{F}^{(2)}}  g_{v,F},  \quad g_{v,F} \in \mathcal{L}_{v,F}^{(2)}.
\eeq 
Further, by Lemma \ref{set-measure-lem}, for any Borel subset $\Delta_1 \times \Delta_2$ of $[0,1] \times \partial \mathbb{B}^d,$
\beq \label{intertwining}
\int_{\Delta_1} \int_{\Delta_2} d\rho_{{}_{\mathscr{T}^{(2)}}}(s) \times d\nu_{{}_{\mathscr{T}^{(2)}}} (z)Uf &=& \int_{\Delta_1} \int_{\Delta_2} U d\rho_{{}_{\mathscr{T}^{(1)}}}(s) \times d\nu_{{}_{\mathscr{T}^{(1)}}}(z) f. \eeq
We verify that 
for almost every $z \in \partial \mathbb B^d$,
 \beq \label{depth-u-v-same}
  \frac{|z^{\dep_v}|^2}{\|z^{\dep_v}\|^{2}_{L^2(\partial \mathbb B^d, \sigma)}}  = \frac{|z^{\dep_u}|^2}{\|z^{\dep_u}\|^{2}_{L^2(\partial \mathbb B^d, \sigma)}} \quad\mbox{if~}v \in \Omega_{F}^{(2)}, ~g_{v, F} \neq 0.
 \eeq
 We divide the verification into two cases: 

\begin{case1*} When $a=d$.
\end{case1*}
\noindent
By the definition of the representing measure (see \eqref{prod-measure-def}) and Corollary \ref{coro-1-slice-rep}, 
 \beq \label{action-f}
 \int_{\Delta_1} \int_{\Delta_2}  U d\rho_{{}_{\mathscr{T}^{(1)}}}(s) \times d\nu_{{}_{\mathscr{T}^{(1)}}}(z) f &=& \int_{\Delta_1} \int_{\Delta_2}   \frac{|z^{\dep_{u}}|^2}{\|z^{\dep_{u}}\|_{L^2(\partial \mathbb B^d, \sigma)}^2}  d\delta_1(s) d\sigma (z)~Uf. 
 \eeq
Arguing similarly and using \eqref{Uf}, we obtain
\beq \label{action-Uf}
\int_{\Delta_1} \int_{\Delta_2} d\rho_{{}_{\mathscr{T}^{(2)}}}(s) \times d\nu_{{}_{\mathscr{T}^{(2)}}} (z)Uf = \int_{\Delta_1} \int_{\Delta_2}  \Big( 
   \sum_{F  \in 
 \mathscr{P}} \sum_{v \in \Omega_{F}^{(2)}}  \frac{|z^{\dep_v}|^2}{\|z^{\dep_v}\|^{2}_{L^2(\partial \mathbb B^d, \sigma)}}  g_{v,F}  \Big) d\delta_1(s) d\sigma(z). \quad \quad 
\eeq
Hence, by \eqref{action-Uf}, \eqref{intertwining} and \eqref{action-f}, we obtain 
\beq \label{coefficient-compare}
&&\int_{\Delta_1} \int_{\Delta_2}  \Big( 
   \sum_{F  \in 
 \mathscr{P}} \sum_{v \in \Omega_{F}^{(2)}}  \frac{|z^{\dep_v}|^2}{\|z^{\dep_v}\|^{2}_{L^2(\partial \mathbb B^d, \sigma)}}  g_{v,F}  \Big) d\delta_1(s) d\sigma(z) \notag \\
 &=& \int_{\Delta_1} \int_{\Delta_2}    \frac{|z^{\dep_{u}}|^2}{\|z^{\dep_{u}}\|_{L^2(\partial \mathbb B^d, \sigma)}^2}  d\delta_1(s) d\sigma(z) ~Uf \notag \\
 &\overset{\eqref{Uf}}=& \int_{\Delta_1} \int_{\Delta_2}   \frac{|z^{\dep_{u}}|^2}{\|z^{\dep_{u}}\|_{L^2(\partial \mathbb B^d, \sigma)}^2}  d\delta_1(s) d\sigma(z) ~\Big(\sum_{F  \in \mathscr{P}} \sum_{v \in \Omega_{F}^{(2)}}  g_{v,F} \Big).
\eeq
Letting $\Delta_1 =[0, 1]$,  we get
\beqn
 \int_{\Delta_2}  \Big( 
   \sum_{F  \in 
 \mathscr{P}} \sum_{v \in \Omega_{F}^{(2)}}  \frac{|z^{\dep_v}|^2}{\|z^{\dep_v}\|^{2}_{L^2(\partial \mathbb B^d, \sigma)}}  g_{v,F}  \Big)  d\sigma(z) 
=\int_{\Delta_2}   \frac{|z^{\dep_{u}}|^2}{\|z^{\dep_{u}}\|_{L^2(\partial \mathbb B^d, \sigma)}^2}   d\sigma(z) ~\Big(\sum_{F  \in \mathscr{P}} \sum_{v \in \Omega_{F}^{(2)}}  g_{v,F} \Big)
 \eeqn
 for every Borel subset $\Delta_2$ of $\partial \mathbb B^d$.
 Comparing the coefficients of nonzero $g_{v, F}$ on both sides, we obtain \eqref{depth-u-v-same}.

\begin{case2*} When $a > d$. \end{case2*}
\noindent
By the definition of the representing measure (see \eqref{prod-measure-def}) and Corollary \ref{coro-1-slice-rep}, 
 \beq 
 \label{3.10}
 \int_{\Delta_1} \int_{\Delta_2} U d\rho_{{}_{\mathscr{T}^{(1)}}}(s) \times d\nu_{{}_{\mathscr{T}^{(1)}}}(z) f = \int_{\Delta_1} \int_{\Delta_2}   w_{|\dep_{u} |}(s) \frac{|z^{\dep_{u}}|^2}{\|z^{\dep_{u}}\|_{L^2(\partial \mathbb B^d, \sigma)}^2}  dm(s) d\sigma(z) ~Uf,
 \eeq
(see \eqref{wt-fn} for the definition of the weight function $w_l(\cdot)$).
Also, by \eqref{Uf},  we obtain
\beq 
\label{3.11}
&& \int_{\Delta_1} \int_{\Delta_2} d\rho_{{}_{\mathscr{T}^{(2)}}}(s) \times d\nu_{{}_{\mathscr{T}^{(2)}}} (z)Uf \notag \\ &=& \int_{\Delta_1} \int_{\Delta_2}  \Big( 
   \sum_{F  \in 
 \mathscr{P}} \sum_{v \in \Omega_{F}^{(2)}} w_{|\dep_v|}(s) \frac{|z^{\dep_v}|^2}{\|z^{\dep_v}\|^{2}_{L^2(\partial \mathbb B^d, \sigma)}}  g_{v,F}  \Big) dm(s) d\sigma(z). \quad 
\eeq
Hence, by \eqref{3.11}, \eqref{intertwining} and \eqref{3.10}, we obtain 
\beq \label{coefficient-compare}
&&\int_{\Delta_1} \int_{\Delta_2}  \Big( 
   \sum_{F  \in 
 \mathscr{P}} \sum_{v \in \Omega_{F}^{(2)}} w_{|\dep_v|}(s) \frac{|z^{\dep_v}|^2}{\|z^{\dep_v}\|^{2}_{L^2(\partial \mathbb B^d, \sigma)}}  g_{v,F}  \Big) dm(s) d\sigma(z) \notag \\
 &=& \int_{\Delta_1} \int_{\Delta_2}   w_{|\dep_{u} |}(s) \frac{|z^{\dep_{u}}|^2}{\|z^{\dep_{u}}\|_{L^2(\partial \mathbb B^d, \sigma)}^2}  dm(s) d\sigma(z) ~Uf \notag \\
 &\overset{\eqref{Uf}}=& \int_{\Delta_1} \int_{\Delta_2}   w_{|\dep_{u} |}(s) \frac{|z^{\dep_{u}}|^2}{\|z^{\dep_{u}}\|_{L^2(\partial \mathbb B^d, \sigma)}^2}  dm(s) d\sigma(z) ~\Big(\sum_{F  \in \mathscr{P}} \sum_{v \in \Omega_{F}^{(2)}}  g_{v,F} \Big).
\eeq
Letting $\Delta_2 =\partial \mathbb{B}^d$,  we get
\beqn
\int_{\Delta_1}  \Big( 
  \sum_{F  \in 
 \mathscr{P}} \sum_{v \in \Omega_{F}^{(2)}} w_{|\dep_v|}(s) g_{v,F}  \Big) dm(s) 
  =  \int_{\Delta_1} w_{|\dep_{u} |}(s)   dm(s)  ~\Big( \sum_{F  \in \mathscr{P}} \sum_{v \in \Omega_{F}^{(2)}}  g_{v,F} \Big)
 \eeqn
 for every Borel subset $\Delta_1$ of $[0,1]$.
 Comparing the coefficients of nonzero $g_{v, F}$ on both sides, we obtain for almost every $s \in [0, 1]$, 
 \beqn
  w_{|\dep_v|}(s) = w_{|\dep_u|}(s) \quad \mbox{if~}v \in \Omega_{F}^{(2)}, ~g_{v, F} \neq 0.
 \eeqn
By \eqref{wt-fn},  $w_k \neq w_l$ as integrable functions for non-negative integers $k \neq l$. Thus
\beq 
\label{depth-u-v}
 \mbox{$g_{v, F} \neq 0$ implies that $|\dep_v| = |\dep_u|$ for all $v \in \Omega_{F}^{(2)}.$ }
 \eeq
Thus \eqref{coefficient-compare} becomes
\beqn 
&&\sum_{F  \in 
 \mathscr{P}} \sum_{\underset{|\dep_v| = |\dep_u|}{v \in \Omega_{F}^{(2)}}}  \Big( \int_{\Delta_1} \int_{\Delta_2}   w_{|\dep_u|}(s) 
    \frac{|z^{\dep_v}|^2}{\|z^{\dep_v}\|^{2}_{L^2(\partial \mathbb B^d, \sigma)}}    dm(s) d\sigma(z) \Big)g_{v,F}  \\
 &=& \sum_{F  \in \mathscr{P}} \sum_{\underset{|\dep_v| = |\dep_u|}{v \in \Omega_{F}^{(2)}}}  \Big( \int_{\Delta_1} \int_{\Delta_2}   w_{|\dep_{u} |}(s) \frac{|z^{\dep_{u}}|^2}{\|z^{\dep_{u}}\|_{L^2(\partial \mathbb B^d, \sigma)}^2}  dm(s) d\sigma(z) \Big)g_{v,F}.
\eeqn
Letting $\Delta_1 = [0,1]$ and comparing the coefficients of nonzero $g_{v, F}$, for every Borel subset $\Delta_2$ of $\partial \mathbb{B}^d$, we get  
\beqn
\int_{\Delta_2} \frac{|z^{\dep_{v}}|^2}{\|z^{\dep_{v}}\|_{L^2(\partial \mathbb B^d, \sigma)}} d\sigma(z) = \int_{\Delta_2}  \frac{|z^{\dep_{u}}|^2}{\|z^{\dep_{u}}\|_{L^2(\partial \mathbb B^d, \sigma)} }d\sigma(z) \quad \mbox{if~}v \in \Omega_{F}^{(2)}, ~g_{v, F} \neq 0,
\eeqn
where we used the fact that $\displaystyle \int_{[0, 1]}w_{l}(s)dm(s) \neq 0,$ $l \in \mathbb N.$
Thus \eqref{depth-u-v-same} holds in this case as well.

We next claim that \beq \label{claim} \dep_v=\dep_u \quad~\mbox{if~}v \in \Omega_{F}^{(2)}, ~g_{v, F} \neq 0.\eeq In case $a > d$ and $d=1$, 
the claim is trivial in view of  \eqref{depth-u-v}. Assume that $d \Ge 2.$ In view of continuity of the monomials and the fact that \eqref{depth-u-v-same} holds on a dense set, the equality in \eqref{depth-u-v-same} holds for all $z \in \partial \mathbb B^d$.
Consider $\alpha := (\alpha_1, \ldots , \alpha_d) \in \mathbb{N}^d$ given by $ \alpha_j = \min \{ {\dep_{u_j}, \dep_{v_j}} \},$ $j=1, \ldots, d.$
Thus
\beq\label{exponent}
\prod_{j=1}^{d} \big| z_{j}^{ \dep_{v_j}- \alpha_j}\big|^2=\frac{\|z^{\dep_{v}}\|_{L^2(\partial \mathbb B^d, \sigma)} }{\|z^{\dep_{u}}\|_{L^2(\partial \mathbb B^d, \sigma)} } \prod_{j=1}^{d} \big| z_{j}^{ \dep_{u_j} -  \alpha_j}\big|^2,   \quad z \in \partial{\mathbb{B}}^d,~ v \in \Omega_{F}^{(2)}, ~g_{v, F} \neq 0.\eeq 
Suppose to the contrary that $\dep_u \neq \dep_v$ for some $v \in \Omega_{F}^{(2)}$. Without loss of generality, we may assume that $\dep_{u_1} \neq \dep_{v_1}$. Let $ w = (0,w_2,\ldots,w_d) \in \partial \mathbb{B}^d$ be such that $w_j \neq 0$ for $j = 2, \ldots,d$. 
Then evaluating \eqref{exponent} at $w$, we get one side of \eqref{exponent} equal to zero, while the other side remains nonzero. This contradicts \eqref{exponent}, and hence $\dep_u =\dep_v.$ Thus the claim stands verified. 
It is now immediate from \eqref{Uf} and \eqref{claim} that
\beqn
U (\mathcal{L}^{(1)}_{u,F}) \subseteq \displaystyle \bigoplus_{H \in \mathscr P} \bigoplus_{\underset{\dep_v =\dep_u}{v\in \Omega_{H}^{(2)}}} \mathcal{L}^{(2)}_{v, H}.
\eeqn   
Note that by \eqref{phi-F-eqn} and \eqref{phi-F},  for any $H \in \mathscr P$ and $v \in \Omega^{(j)}_H~(j=1, 2)$, $k \in H$ if and only if $\dep_{v_k} \neq 0$, and therefore
\beqn \label{U-inclusion}
U (\mathcal{L}^{(1)}_{u,F}) \subseteq \displaystyle \bigoplus_{\underset{\dep_v =\dep_u}{v\in \Omega_{F}^{(2)}}} \mathcal{L}^{(2)}_{v,F}.
\eeqn 
This also yields
$$
U(\displaystyle \bigoplus_{\underset{\dep_u=\alpha}{u \in \Omega_{F}^{(1)}}} \mathcal{L}^{(1)}_{u,F}) \subseteq \displaystyle \bigoplus_{\underset{\dep_v =\alpha}{v\in \Omega_{F}^{(2)}}} \mathcal{L}^{(2)}_{v,F}
$$ for every $\alpha \in \mathbb N^d$ such that $\alpha_j \neq 0$ if and only if $j \in F$.
Applying this fact to $U^{-1},$ we obtain the desired conclusion in the first part.
The remaining part follows by applying the first part to $\alpha=0$ and $F = \emptyset$ (see the proof of Lemma \ref{lem-E-finite}).
\end{proof}
\begin{remark}
In case $a=1=d$, the conclusion in \eqref{depth-u-v-same} always holds, while \eqref{claim} does not follow from \eqref{depth-u-v-same} unlike the case $ad \neq 1$.
\end{remark}

In the proof of the main result, we also need a couple of facts related to the indexing set $\Omega_F,$ $F \in \mathscr P$. The following is
immediate from the definition of $\Omega_{\{l\}}$, $l=1, \ldots, d$:
 \beq \label{2identities}
 \mathcal G_{n-1}(\mathscr T_l) = \bigsqcup_{\underset{\dep_u=n \epsilon_l}{u \in \Omega_{\{l\}}}} \{\parent{u_l}\}, \quad
 \mathcal G_{n}(\mathscr T_l) = \bigsqcup_{\underset{\dep_u=n \epsilon_l}{u \in \Omega_{\{l\}}}} \sib{u_l}, \quad n \Ge 1.
 \eeq
In the proof of Theorem \ref{main-thm}, we also need a {\it canonical choice} for the indexing set $\Omega_F.$ Indeed, 
the choices of $\Omega_{\{j\}}$,  $j \in F$ yield the following natural choice for $\Omega_F$:
\beq
\label{c-choice}
\tilde{\Omega}_F:=\bigcap_{j \in F} \{(u_1, \ldots, u_d) \in V : u_l=\rootb_l~\mbox{if~}l \notin F, ~ u_{j}=\tilde{u}_{j} ~\mbox{for some~} \tilde{u} \in \Omega_{\{j\}}\}.
\eeq
To see that the above collection satisfies requirements of an indexing set $\Omega_F$ (as ensured by the axiom of choice), note the following:
\begin{enumerate} 
\item[$\bullet$] If $u, v \in \tilde{\Omega}_F$ such that $u \neq v$, then $\sibi{F}{u} \cap \sibi{F}{v}=\emptyset.$
\item[$\bullet$]  $\Phi_F = \displaystyle \bigsqcup_{v \in \tilde{\Omega}_F} \mathsf{sib}_F(v).$ To see this, let 
$u \in \Phi_F$, and fix $j \in F$. Note that the $d$-tuple $u^{(j)}:=(\rootb_1, \ldots, \rootb_{j-1}, u_j, \rootb_{j+1}, \ldots, \rootb_d) \in \Phi_{\{j\}}.$ However,
\beqn
\Phi_{\{j\}} = \displaystyle \bigsqcup_{v \in \Omega_{\{j\}}} \mathsf{sib}_{\{j\}}(v).
\eeqn
Thus $u^{(j)} \in \mathsf{sib}_{\{j\}}(v^{(j)})$ for some $v^{(j)} \in \Omega_{\{j\}}.$
Let $v$ denote the $d$-tuple with $j^{\tiny \mbox{th}}$ entry given by 
\beqn
v_j = \begin{cases} j^{\tiny \mbox{th}}~ \mbox{coordinate of~} v^{(j)} & \mbox{if~}j \in F, \\
\rootb_j & \mbox{if~}j \notin F.
\end{cases}
\eeqn
Then $u \in \mathsf{sib}_F(v)$ and $v \in \tilde{\Omega}_F.$

\end{enumerate}

The utility of the canonical choice of the indexing set is illustrated in the following.
\begin{lemma} Let $\mathscr T = (V,\mathcal E)$ be the directed Cartesian product of locally finite rooted directed trees 
$\mathscr T_1, \cdots, \mathscr T_d$. 
Then, for any $\alpha \in \mathbb N^d$ and a nonempty $F \in \mathscr P$,
\beq \label{c-sum}
\prod_{l \in F }\sum_{\underset{\dep_u=\alpha_l \epsilon_l}{u \in \Omega_{\{l\}}}} (\mbox{card}( \mathsf{sib}(u_l)) -1)
= \displaystyle \sum_{\underset{\dep_u=\alpha}{u \in \Omega_{F}}} \prod_{l\in F}(\mbox{card}( \mathsf{sib}(u_l)) -1). \eeq
\end{lemma}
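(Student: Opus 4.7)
The plan is to exploit the canonical choice of indexing set $\tilde{\Omega}_F$ introduced just above the statement, which factorizes the sum on the right hand side into a product of sums over the one-element sets $\Omega_{\{l\}}$, $l \in F$.

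First, I would observe that both sides of \eqref{c-sum} are independent of the choice of indexing sets. Indeed, if $u, u' \in \Phi_F$ satisfy $u' \in \mathsf{sib}_F(u)$, then $u'_j \in \mathsf{sib}(u_j)$ for every $j \in F$ and $u'_j = u_j = \rootb_j$ for $j \notin F$; in particular $\mathsf{sib}(u'_j) = \mathsf{sib}(u_j)$ for all $j$ and $\dep_{u'} = \dep_u$. Therefore the summand $\prod_{l \in F}(\mbox{card}(\mathsf{sib}(u_l)) - 1)$ and the constraint $\dep_u = \alpha$ depend only on the equivalence class of $u$, so the sum is the same for every choice of $\Omega_F$. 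The same reasoning applies to the sums over $\Omega_{\{l\}}$.

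Next, I would work with the canonical indexing set $\tilde{\Omega}_F$ defined in \eqref{c-choice}. By construction, an element $u \in \tilde{\Omega}_F$ is uniquely specified by a family $(\tilde u^{(l)})_{l \in F}$ with $\tilde u^{(l)} \in \Omega_{\{l\}}$, via
\[
u_j = \begin{cases} \tilde u^{(j)}_j & \text{if } j \in F, \\ \rootb_j & \text{if } j \notin F. \end{cases}
\]
Under this identification, $\dep_{u} = \sum_{l \in F} \dep_{\tilde u^{(l)}}$, where each $\dep_{\tilde u^{(l)}}$ is supported on the $l^{\text{th}}$ coordinate; hence $\dep_u = \alpha$ is equivalent to $\dep_{\tilde u^{(l)}} = \alpha_l \epsilon_l$ for every $l \in F$. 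Moreover, $\mathsf{sib}(u_l) = \mathsf{sib}(\tilde u^{(l)}_l)$ for each $l \in F$.

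Putting these pieces together and using the distributive law,
\begin{align*}
\sum_{\substack{u \in \tilde\Omega_F \\ \dep_u = \alpha}} \prod_{l \in F}(\mbox{card}(\mathsf{sib}(u_l)) - 1)
&= \sum_{\substack{(\tilde u^{(l)})_{l \in F} \\ \tilde u^{(l)} \in \Omega_{\{l\}},\ \dep_{\tilde u^{(l)}} = \alpha_l \epsilon_l}} \prod_{l \in F}(\mbox{card}(\mathsf{sib}(\tilde u^{(l)}_l)) - 1) \\
&= \prod_{l \in F} \sum_{\substack{\tilde u^{(l)} \in \Omega_{\{l\}} \\ \dep_{\tilde u^{(l)}} = \alpha_l \epsilon_l}} (\mbox{card}(\mathsf{sib}(\tilde u^{(l)}_l)) - 1),
\end{align*}
which is precisely the left hand side of \eqref{c-sum}. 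Since the value of the left hand side is independent of the choice of $\Omega_F$, this proves the identity for arbitrary $\Omega_F$ and $\Omega_{\{l\}}$. The only conceptual point is the bijective identification of $\tilde\Omega_F$ with $\prod_{l \in F} \Omega_{\{l\}}$ (at the level of depth-constrained subfamilies), which is immediate from the construction of $\tilde\Omega_F$; no further obstacles arise.
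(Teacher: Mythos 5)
Your proof is correct and follows essentially the same route as the paper's: identify the canonical indexing set $\tilde{\Omega}_F$ of \eqref{c-choice} with tuples from $\Omega_{\{l\}}$, $l \in F$, factor the sum by distributivity, and invoke independence of the choice of $\Omega_F$. The only difference is that you explicitly verify the invariance of the summand on $\mathsf{sib}_F$-classes, a point the paper merely asserts, which is a welcome addition rather than a deviation.
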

\begin{proof}
Let $F=\{i_1, \ldots, i_k\},$
and note that
\beqn
&& \prod_{l \in F }\sum_{\underset{\dep_u=\alpha_l \epsilon_l}{u \in \Omega_{\{l\}}}} (\mbox{card}( \mathsf{sib}(u_l)) -1) \\
&=& \displaystyle \sum_{\underset{\dep_{u^{(1)}}=\alpha_{i_1} \epsilon_{i_1}}{u^{(1)} \in \Omega_{\{i_1\}}}} \ldots  \sum_{\underset{\dep_{u^{(k)}}=\alpha_{i_k} \epsilon_{i_k}}{u^{(k)} \in \Omega_{\{i_k\}}}} (\mbox{card}( \mathsf{sib}(u^{(1)}_{i_1})) -1) \ldots (\mbox{card}( \mathsf{sib}(u^{(k)}_{i_k})) -1) \\
&=& \displaystyle \sum_{\underset{\dep_u=\alpha}{u \in \tilde{\Omega}_{F}}} \prod_{l\in F}(\mbox{card}( \mathsf{sib}(u_l)) -1),
\eeqn
where $\tilde{\Omega}_F$ is the canonical choice as given in \eqref{c-choice}. The desired formula now follows from the fact that the summation on right hand side of \eqref{c-sum} is independent of the choice of $\Omega_F$.  
\end{proof}

We now complete the proof of the main result of this paper.
\begin{proof}[of Theorem \ref{main-thm}]
The implication (i) $\Rightarrow$ (ii) follows from Proposition \ref{level-eq}.
To see the implication (ii)$\Rightarrow$ (i), suppose that (ii) holds. Thus, for every $\alpha \in \mathbb{N}^{d}$ and every $F \in \mathscr P$, there exists a unitary $$U_{F,\alpha} :  \displaystyle \bigoplus_{\underset{\dep_u =\alpha}{u \in \Omega_{F}^{(1)}}} \mathcal{L}^{(1)}_{u,F} \rar \displaystyle \bigoplus_{\underset{\dep_v=\alpha}{v\in \Omega_{F}^{(2)}}} \mathcal{L}^{(2)}_{v,F}.$$ Define $U : E^{(1)} \rar E^{(2)}$ by setting $U=\displaystyle \bigoplus_{\underset{\alpha \in \mathbb N^d}{F \in \mathscr P }}U_{F, \alpha}$. 
Let $\rho_{{}_{\mathscr{T}^{(j)}}} \times d\nu_{{}_{\mathscr{T}^{(j)}}}$ be the representing measure of $\mathscr H_{\mf c_a}(\mathscr T^{(j)})$ (resp. $\mathscr H^{\mf s}_{\mf c_a}(\mathscr T^{(j)})$) in case $a \Ge d$ (resp. $a < d$), $j=1, 2.$ Fix  $F \in \mathscr P$ and $\alpha \in \mathbb N^d$.
Note that by \eqref{prod-measure-def}, for any Borel subset $A$ of $[0, 1] \times \partial \mathbb B^d$,
\beqn 
&& Ud\rho_{{}_{\mathscr{T}^{(1)}}} \times d\nu_{{}_{\mathscr{T}^{(1)}}}(A)\Big(\sum_{\underset{\dep_u=\alpha}{u \in \Omega^{(1)}_{F}}}  g_{u,F} \Big) 
\\ &=& \int_A \Big( 
 \sum_{\underset{\dep_u=\alpha}{u \in \Omega^{(1)}_{F}}} w_{|\dep_u|}(s) \frac{|z^{\dep_u}|^2}{\|z^{\dep_u}\|^{2}_{L^2(\partial \mathbb B^d, \sigma)}}  U_{F, \alpha}(g_{u,F})  \Big) dm(s) d\sigma(z) \\ &=& \int_A 
  w_{|\alpha|}(s) \frac{|z^{\alpha}|^2}{\|z^{\alpha}\|^{2}_{L^2(\partial \mathbb B^d, \sigma)}}   \Big( \sum_{\underset{\dep_u=\alpha}{u \in \Omega^{(1)}_{F}}} U_{F, \alpha}(g_{u,F})  \Big) dm(s) d\sigma(z) 
 \\ &=& d\rho_{{}_{\mathscr{T}^{(2)}}} \times d\nu_{{}_{\mathscr{T}^{(2)}}}(A)~U\Big(\sum_{\underset{\dep_u=\alpha}{u \in \Omega^{(1)}_{F}}}  g_{u,F} \Big).
 \eeqn
Hence, by Lemma \ref{set-measure-lem}, the Hilbert modules $\mathscr H_{\mf c_a}(\mathscr T^{(1)})$ and $\mathscr H_{\mf c_a}(\mathscr T^{(2)})$ (resp. $\mathscr H^{\mf s}_{\mf c_a}(\mathscr T^{(1)})$ and $\mathscr H^{\mf s}_{\mf c_a}(\mathscr T^{(2)})$) are isomorphic in case $a \Ge d$ (resp. $a < d$). 
By the first paragraph of the proof of Proposition \ref{level-eq}, in both these cases, 
 $\mathscr H_{\mf c_a}(\mathscr T^{(1)})$ and $\mathscr H_{\mf c_a}(\mathscr T^{(2)})$ are isomorphic.
 
 We now see the equivalence of (iii) and (iv). 
Note that for every integer $n \Ge 1$,  
\beqn
\sum_{\underset{\dep_u=n \epsilon_k}{u \in \Omega^{(j)}_{\{l\}}}} (\mbox{card}( \mathsf{sib}_l(u)) -1)  &=& \sum_{\underset{\dep_u=n \epsilon_l}{u \in \Omega^{(j)}_{\{l\}}}} \big(\mbox{card}( \mathsf{sib}(u_l)) -\mbox{card}(\parent{u_l}\big) 
 \\ &\overset{\eqref{2identities}}=& \mbox{card}(\mathcal G_{n}(\mathscr T^{(j)}_l))- \mbox{card}(\mathcal G_{n-1}(\mathscr T^{(j)}_l)).
 \eeqn
Since each $\mathscr T^{(j)}_l$ is rooted, $\mbox{card}(\mathcal G_{0}(\mathscr T^{(j)}_l))=1$. A routine telescopic sum argument now establishes the equivalence of (iii) and (iv).

To see (iii) $\Rightarrow$ (ii), 
note first that (ii) holds trivially in case $F=\emptyset.$
Let $ \alpha \in \mathbb{N}^d$ and  $F \in \mathscr P$ be nonempty. 
By (iii),  
\beqn
\prod_{l \in F }\sum_{\underset{\dep_u=\alpha_l \epsilon_l}{u \in \Omega^{(j)}_{\{l\}}}} (\mbox{card}( \mathsf{sib}_l(u)) -1) &=& 
\prod_{l \in F }\sum_{\underset{\dep_u=\alpha_l \epsilon_l}{u \in \Omega^{(j)}_{\{l\}}}} (\mbox{card}( \mathsf{sib}(u_l)) -1)
\\ &\overset{\eqref{c-sum}}=& \displaystyle \sum_{\underset{\dep_u=\alpha}{u \in \Omega_{F}^{(j)}}} \prod_{l \in F}(\mbox{card}( \mathsf{sib}(u_l)) -1).
\eeqn
However, by \eqref{dim-formula}, 
\beq \label{above-eq} \displaystyle \sum_{\underset{\dep_u=\alpha}{u \in \Omega_{F}^{(j)}}} \dim\mathcal{L}^{(j)}_{u,F} = \sum_{\underset{\dep_u=\alpha}{u \in \Omega_{F}^{(j)}}} \prod_{l \in F} (\mbox{card}(\sib{u_{l}})-1),
\eeq
which establishes (ii).
To complete the proof of the theorem, it now suffices to see that (ii) $\Rightarrow$ (iii). This follows immediately by taking $F = \{l\}$ in \eqref{above-eq}.
\end{proof}


We conclude this paper with some possibilities for further  investigations. 
It is clear that the existence of the operator-valued representing measures for Drury-Arveson-type modules or its spherical Cauchy dual modules is one of the crucial ingredients for the proof of Theorem \ref{main-thm}. In case the parameter $a$ is non-integral, we do not know whether or not the Hilbert modules $\mathscr H_{\mf c_a}(\mathscr T)$ or $\mathscr H^{\mf s}_{\mf c_a}(\mathscr T)$ admit representing measures ? 
Further, the following classification problem arises naturally in the realm of graph-theoretic operator theory. 

\begin{problem}
For $j=1,2$, let $\mathscr T^{(j)}= (V^{(j)}, 
\mathcal{E}^{(j)})$ denote the directed Cartesian product of 
locally finite, leafless, rooted directed trees 
$\mathscr T^{(j)}_1, \ldots, \mathscr T^{(j)}_d$ of finite joint branching index, and
consider 
the (graded) submodules $\mathscr N^{(j)}$ of 
the Drury-Arveson-type Hilbert module 
$\mathscr{H}_{\mf c_a}(\mathscr T^{(j)})$ generated by (homogeneous) polynomials $p_1, \ldots, p_l \in \mathbb C[z_1, \ldots, z_d]$. Under what conditions on $\mathscr T^{(1)}$ and $\mathscr T^{(2)}$,  the submodules $\mathscr N^{(1)}$ and $\mathscr N^{(2)}$ are isomorphic ? 
\end{problem}

\oneappendix
\section{Constant on parents is constant on generations}

The main result of this appendix is a rigidity theorem showing that in higher dimensions ($d \Ge 2$) the conditions \eqref{constant-gen} and \eqref{w-sp} are equivalent. Here is the precise statement.

\begin{thmA*} 
Let $\mathscr T = (V,\mathcal E)$ be the directed Cartesian product of leafless, rooted directed trees $\mathscr T_1, \ldots, \mathscr T_d$ and let $S_{\lambdab}=(S_1, \ldots, S_d)$ be a commuting multishift on $\mathscr T$. Consider
the function $\mf C : V \rar (0, \infty)$ given by
\beqn  \mf C(v) := \sum_{j=1}^d \|S_j e_v\|^2, \quad v \in V. \eeqn
If $d \Ge 2$, then the following conditions are equivalent:
\begin{enumerate}
\item[(i)] 
$\mf C$ is
constant on every generation $\mathcal G_t$, $t \in  \mathbb N$. 
\item[(ii)] $\mf C$ is constant on $\mathsf{Par}(v)$ for every $v \in V^{\circ}.$
\end{enumerate}
\end{thmA*}
Recall the following notations from \cite{CPT}: 
\beqn \mathsf{Chi}(v):=\bigcup_{j=1}^d \childi{j}{v},
~\mathsf{Par}(v):=\bigcup_{j=1}^d \parenti{j}{v}, \quad v \in V.\eeqn
In the proof of the above theorem, we need two general facts.

\begin{lemA*} \label{V-beta-constant}
Let $\mathscr T = (V, \mathcal E)$ be the directed Cartesian product of leafless, rooted directed trees $\mathscr T_1,  \ldots, \mathscr T_d$. Set
\beq \label{V-beta}
V_\beta := \{v \in V : \dep_v = \beta\}, \quad \beta \in \mathbb N^d.
\eeq
Let $f$ be a complex valued function on $V$ such that $f$ is constant on each of the sets $\child{v}$, $v \in V$. If $d \Ge 2$, then the following statements are equivalent:
\begin{itemize}
\item[(i)] $f$ is constant on each of the sets $V_{t\epsilon_1}$, $t \in \mathbb N$.
\item[(ii)] $f$ is constant on each of the sets $V_\beta$ and $f(V_\beta) = f(V_{|\beta|\epsilon_1})$, $\beta \in \mathbb N^d$.
\item[(iii)] $f$ is constant on each of the generations $\mathcal G_t$, $t \in  \mathbb N$.
\end{itemize}
\end{lemA*}

\begin{proof}
Assume that $d \Ge 2.$
Clearly, the implications (iii) $\Rightarrow$ (ii), and (ii) $\Rightarrow$ (i) hold.
To see that (i) $\Rightarrow$ (ii), 
let $\beta = (\beta_1, \ldots, \beta_d) \in \mathbb N^d$. By (i), $f$ is constant on $V_{|\beta|\epsilon_1}$. Consider the sequence $\{\Gamma_l\}_{l=1}^{d-1}$   given by
\beqn
\Gamma_1 &:=& \big\{\gamma^{(j)}_1 = (|\beta|-j, j, 0, \ldots, 0) \in \mathbb N^d :  j=1, \ldots, \beta_2 \big\},\\
\Gamma_k &:=& \big\{\gamma^{(j)}_k = 
(|\beta|-\beta_2 - \cdots - \beta_k -j, \beta_2, \ldots, \beta_{k}, j, \underbrace{0, \ldots, 0}_{\underset{\tiny \mbox{entries}}{\tiny{d-k-1}}}) \in \mathbb N^d :  \underset{\mbox{$k~=~2, \ldots, d-1.$}}{\underset{}{j=1, \ldots, \beta_{k+1}}} \big\}  \eeqn
Let $v \in V_{\gamma^{(1)}_1}$. Then $v \in \mathsf{Chi}_2(\parenti{2}{v})$ and $\mathsf{Chi}_1(\parenti{2}{v}) \subseteq V_{|\beta|\epsilon_1}$. 
Thus $\child{\parenti{2}{v}}$ intersects with $V_{|\beta| \epsilon_1}$.
Since $f$ is constant on $\child{\parenti{2}{v}}$, it follows that $f(v)$ is equal to the constant value of $f$ on $V_{|\beta| \epsilon_1}$. Since $v$ was chosen arbitrarily, we get that $f$ is constant on $V_{\gamma^{(1)}_1}$ and $f(V_{\gamma^{(1)}_1}) = f(V_{|\beta|\epsilon_1})$. 

We claim that for any $k=1, \ldots, d-1,$ if $f$ is constant on $V_{\gamma^{(j)}_k}$, then it is also constant on $V_{\gamma^{(j+1)}_k}$ (if $j < \beta_{k+1}$) or $V_{\gamma^{(1)}_{k+1}}$ (if $j=\beta_{k+1}$) with the  same constant value. We divide this verification into two cases:

\begin{case1*} 
When $j < \beta_{k+1}$.
\end{case1*}
\noindent
Note that 
\beqn
\gamma^{(j)}_k &=& (|\beta|-\beta_2 - \cdots - \beta_k - j, \beta_2, \ldots, \beta_k, j, 0, \ldots, 0), \\ \gamma^{(j+1)}_k &=& (|\beta|-\beta_2 - \cdots - \beta_k - j-1, \beta_2, \ldots, \beta_k, j+1, 0, \ldots, 0).
\eeqn
Suppose $f$ is constant on $V_{\gamma^{(j)}_k}$. Let $v \in V_{\gamma^{(j+1)}_k}$. Then $v \in \mathsf{Chi}_{k+1}(\parenti{k+1}{v})$ and $\mathsf{Chi}_1(\parenti{k+1}{v}) \subseteq V_{\gamma^{(j)}_k}$. 
Thus $\child{\parenti{k+1}{v}}$ intersects with $V_{\gamma^{(j)}_k}$.
Since $f$ is constant on $\child{\parenti{k+1}{v}}$, it follows that $f(v)$ is equal to the constant value of $f$ on $V_{\gamma^{(j)}_k}$. Since $v$ was chosen arbitrarily, we get that $f$ is constant on $V_{\gamma^{(j+1)}}$ and $f(V_{\gamma^{(j+1)}_k}) = f(V_{\gamma^{(j)}_k})$.

\begin{case2*}  
When $j = \beta_{k+1}$.
\end{case2*}
\noindent
Note that 
\beqn
\gamma^{(\beta_{k+1})}_k &=& (|\beta|-\beta_2 - \cdots - \beta_{k+1}, \beta_2, \ldots, \beta_{k+1}, 0, \ldots, 0), \\ \gamma^{(1)}_{k+1} &=& (|\beta|-\beta_2 - \cdots - \beta_{k+1} -1, \beta_2, \ldots, \beta_{k+1}, 1, 0, \ldots, 0).
\eeqn
Suppose $f$ is constant on $V_{\gamma^{(\beta_{k+1})}_k}$. Let $v \in V_{\gamma^{(1)}_{k+1}}$. Then $v \in \mathsf{Chi}_{k+2}(\parenti{k+2}{v})$ and $\mathsf{Chi}_1(\parenti{k+2}{v}) \subseteq V_{\gamma^{(\beta_{k+1})}_k}$. 
Thus $\child{\parenti{k+2}{v}}$ intersects with $V_{\gamma^{(\beta_{k+1})}_k}$.
Since $f$ is constant on $\child{\parenti{k+2}{v}}$, it follows that $f(v)$ is equal to the constant value of $f$ on $V_{\gamma^{(\beta_{k+1})}_k}$. Since $v$ was chosen arbitrarily, we get that $f$ is constant on $V_{\gamma^{(1)}_{k+1}}$ and $f(V_{\gamma^{(1)}_{k+1}}) = f(V_{\gamma^{(\beta_{k+1})}_k})$.
Thus the claim stands verified. Since $\gamma^{(\beta_d)}_{d-1}=\beta$, we obtain
\beqn
f(V_{|\beta|\epsilon_1}) = f(V_{\gamma^{(1)}_1}) = \ldots = f(V_{\gamma^{(\beta_d)}_{d-1}})=f(V_{\beta}).
\eeqn
This yields (ii). 

Now we show that (ii) $\Rightarrow$ (iii). Let $u,v$ be any two vertices in $\mathcal G_t$, $t \in \mathbb N$. Then $|\dep_u| = |\dep_v| = t$ and by (ii), $f(V_{\dep_v}) = f(V_{\dep_u}) = f(V_{t\epsilon_1})$. This shows that $f(u) = f(v)$, which proves (iii). 
\end{proof}

\begin{lemB*}
Let $\mathscr T = (V, \mathcal E)$ be the directed Cartesian product of leafless, rooted directed trees $\mathscr T_1,  \ldots, \mathscr T_d$, $d \Ge 2$ and let $f$ be a complex valued function on $V$. Then the following statements are equivalent:
\begin{itemize}
\item[(i)] $f$ is constant on each of the sets $\child{v}$, $v \in V$.
\item[(ii)] $f$ is constant on each of the sets $\mathsf{Par}(v)$, $v \in V^\circ$.
\end{itemize}
\end{lemB*}

\begin{proof}
To see that (i) $\Rightarrow$ (ii), let $f$ be constant on each of the sets $\child{v}$, $v \in V$. Let $v \in V^\circ$ and $u, w \in \mathsf{Par}(v)$. Then $u = \parenti{i}{v}$ and $w = \parenti{j}{v}$ for some $1 \Le i, j \Le d$. Note that
$$u \in \childi{j}{\mathsf{par}_i\mathsf{par}_j(v)} \mbox{ and } w \in \childi{i}{\mathsf{par}_i\mathsf{par}_j(v)}.$$
Thus $u, w \in \child{\mathsf{par}_i\mathsf{par}_j(v)}$. Hence, by the hypothesis, $f(u) = f(w)$, proving (ii).

To see that (ii) $\Rightarrow$ (i), let $f$ be constant on each of the sets $\mathsf{Par}(v)$, $v \in V^\circ$. Let $v \in V$ and $u, w \in \child{v}$. Then $u \in \childi{i}{v}$ and $w \in \childi{j}{v}$ for some $1 \Le i, j \Le d$. 

\begin{case1*} 
When $i \neq j$. 
\end{case1*}

Without loss of generality, assume that $i < j$. In this case, consider the vertex $$\eta = (v_1, \ldots, u_i, \ldots, w_j, \ldots, v_d).$$ Note that $u_i \in \child{v_i}$, $w_j \in \child{v_j}$, $u = \parenti{j}{\eta}$ and $w = \parenti{i}{\eta}$. Thus $u, w \in \mathsf{Par}(\eta)$. Hence, by (ii), $f(u) = f(w)$. 

\begin{case2*} 
When $i = j$. 
\end{case2*}
For any positive integer $k \in \{1, \ldots, d\}$ such that $k \neq i$, consider the vertices $$\theta = (v_1, \ldots, \eta_k, \ldots, u_i, \ldots, v_d) \mbox{ and } \xi = (v_1, \ldots, \eta_k, \ldots, w_i, \ldots, v_d),$$
where $\eta_k \in \child{v_k}.$ Note that $u_i, w_i \in \child{v_i}$, and $\parenti{i}{\theta} = \parenti{i}{\xi}$. Hence $\mathsf{Par}(\theta) \cap \mathsf{Par}(\xi) \neq \emptyset$. Further, $u = \parenti{k}{\theta} \in \mathsf{Par}(\theta)$ and $w = \parenti{k}{\xi} \in \mathsf{Par}(\xi)$. Since $f$ is constant on $\mathsf{Par}(\theta)$ as well as on $\mathsf{Par}(\xi)$, and $\mathsf{Par}(\theta),$ $\mathsf{Par}(\xi)$ have a common vertex, it follows that $f(u) = f(w)$.

This proves (i).
\end{proof}

\begin{proof}[of Theorem A]
Assume that $d \Ge 2.$
In view of Lemmas A and B (applied to $f=\mf C$), it suffices to show that if \beq \label{A.2} \mbox{$\mf C$ is constant on each of the sets $\child{v}$, $v \in V$,}\eeq 
then it is constant on $V_{t \epsilon_1}$ for every $t \in \mathbb N$, where $V_{\beta}$ is as defined in \eqref{V-beta}. To this end, let $t \in \mathbb N$ and let $u , v$ be any two vertices in $V_{t\epsilon_1}$. We need to show that $\mf C(u) = \mf C(v)$. Observe that as $\dep_u  = t\epsilon_1 = \dep_v$, we must have $u = (u_1, \rootb_2, \cdots, \rootb_d)$ and $v = (v_1, \rootb_2, \cdots, \rootb_d)$ for some vertices $u_1, v_1$ of depth $t$  in $\mathscr T_1$. Let $k$ denote the unique least non-negative integer such that 
\beq \label{v-u equal}
\parentn{k}{u_1} = \parentn{k}{v_1}.
\eeq
If $k=0$, then $u= v$, and hence $\mf C(u)=\mf C(v)$ holds trivially. So assume that $k \Ge 1$. Consider the sequence $\{v_2^{(l)}\}_{l \in \mathbb N}$ of vertices in $V_2$ with the following conditions:
\beqn
\parent{v_2^{(1)}} = \rootb_2 = v^{(0)}_2, ~ \parent{v_2^{(l)}} = v_2^{(l-1)}, ~ l \Ge 2.
\eeqn
Now consider the sequence $\{v^{(l)}\}_{l =1}^{k}$ of vertices in $\mathscr T$ given as follows:
\beqn
v^{(l)} &=& \big(\parentn{l}{v_1}, v_2^{(l-1)}, \rootb_3, \ldots, \rootb_d \big), ~ l =1, \ldots, k.
\eeqn
Further, consider the sequence $\{\theta^{(l)}\}_{l = 1}^{k}$ of vertices in $\mathscr T$ given as follows:
\beqn
\theta^{(l)} &=& \big(\parentn{l}{v_1}, v_2^{(l)}, \rootb_3, \ldots, \rootb_d \big),~ l =1, \ldots, k.
\eeqn
Notice that $v \in \childi{1}{v^{(1)}}$ and $\theta^{(1)} \in \childi{2}{v^{(1)}}$. Thus $v, \theta^{(1)} \in \child{v^{(1)}}$, and hence by \eqref{A.2}, $\mf C(v) = \mf C(\theta^{(1)})$. Further, observe that $\theta^{(1)} \in \childi{1}{v^{(2)}}$ and $\theta^{(2)} \in \childi{2}{v^{(2)}}$. Once again, by \eqref{A.2}, $\mf C(\theta^{(1)}) = \mf C(\theta^{(2)})$. 
A finite inductive argument together with \eqref{A.2} shows 
$\mf C(\theta^{(l-1)}) = \mf C(\theta^{(l)})$ for $l=2, \ldots, k-1.$
Thus we obtain
\beq\label{f-theta-k1}
\mf C(v) = \mf C(\theta^{(1)}) = \ldots = \mf C(\theta^{(k)}).
\eeq
Note that by \eqref{v-u equal}, 
\beqn
v^{(k)} = \big(\parentn{k}{v_1}, v_2^{(k-1)}, \rootb_3, \ldots, \rootb_d \big) = \big(\parentn{k}{u_1}, v_2^{(k-1)}, \rootb_3, \ldots, \rootb_d \big).
\eeqn
Now consider the sequence $\{w^{(l)}\}_{l =1}^{k-1}$ of vertices in $\mathscr T$ given as follows:
\beqn
w^{(l)} &=& \big(\parentn{k-l}{u_1}, v_2^{(k-l-1)}, \rootb_3, \ldots, \rootb_d \big), \quad l =1, \ldots, k-1.
\eeqn
Further, consider the sequence $\{\eta^{(l)}\}_{l =1}^{k}$ of vertices in $\mathscr T$ given as follows:
\beqn
\eta^{(l)} &=& \big(\parentn{k-l}{u_1}, v_2^{(k-l)}, \rootb_3, \ldots, \rootb_d \big), \quad l =1, \ldots, k.
\eeqn
Observe that $\eta^{(1)} \in \childi{1}{v^{(k)}}$ and $\theta^{(k)} \in \childi{2}{v^{(k)}}$. Thus $\eta^{(1)}, \theta^{(k)} \in \child{v^{(k)}}$, and hence by \eqref{A.2}, $\mf C(\theta^{(k)}) = \mf C(\eta^{(1)})$. Further, observe that $\eta^{(1)} \in \childi{2}{w^{(1)}}$ and $\eta^{(2)} \in \childi{1}{w^{(1)}}$. 
Arguing as above, we have $\mf C(\eta^{(1)}) = \mf C(\eta^{(2)})$. A finite inductive argument now shows that
\beq\label{f-theta-k1-eta}
\mf C(\theta^{(k)}) = \mf C(\eta^{(1)}) = \ldots = \mf C(\eta^{(k)}) = \mf C(u).
\eeq
Combining \eqref{f-theta-k1} and \eqref{f-theta-k1-eta}, we get $\mf C(v) = \mf C(u)$. 
\end{proof}

\begin{acknowledgements}
The authors convey their sincere thanks to Rajeev Gupta for some stimulating conversations pertaining to the dimension formula of Section 2.1. 
\end{acknowledgements}

\affiliationthree{
  S. Chavan, D. K. Pradhan {\small \&}  S. Trivedi \\
   Department of Mathematics and Statistics\\
Indian Institute of Technology Kanpur\\
Kanpur 208016\\
   India
   \email{chavan@iitk.ac.in \\ dpradhan@iitk.ac.in \\ shailtr@iitk.ac.in}}


\begin{thebibliography}{9}



\bibitem{AV} D. Alpay and D. Volok, 
Point evaluation and Hardy space on a homogeneous tree, {\it
Integr. Eqn. Op. Th.} {\bf 53} (2005), 1-22. 

 
 
%
%
%
%
%
%

\bibitem{AC} A. Anand and S. Chavan, A moment problem and joint q-isometry tuples, {\it Complex Anal. Oper. Theory}, {\bf 11} (2017), 785-810. 

\bibitem{ACJS} A. Anand, S. Chavan, Z. Jab{\l}o\'nski, and J. Stochel,
{A solution to the Cauchy dual subnormality problem for 
$2$-isometries}, preprint, 2017.  arXiv:1702.01264v2 [math.FA]
%
%
%

\bibitem{AFFP} J. Aramayona, J. Fern\'{a}ndez, P. Fern\'{a}ndez, and C. Mart\'{i}nez-P\'{e}rez,  Trees, homology, and automorphism groups of RAAGs, preprint, 2017.  	arXiv:1707.02481 [math.GR]


\bibitem{Ar} W. Arveson, Subalgebras of $C^*$-algebras. III. Multivariable operator theory,
{\it Acta Math.} {\bf 181} (1998), 159-228.
%
%
%
%
%
%
%
%
%
%
%

%
%
%
\bibitem{BJJS} P. Budzy\'{n}ski, Z. Jab{\l}o\'nski, Il Bong Jung, and J. Stochel, 
Unbounded subnormal composition operators in $L^2$-spaces, {\it J. Funct. Anal.} {\bf 269} (2015), 2110-2164.

%
%

\bibitem{BDPP} P. Budzy\'{n}ski,  P. Dymek, A. Planeta and M. Ptak, Weighted shifts on directed trees. Their multiplier algebras, reflexivity and decompositions, preprint, 2017. arXiv:1702.00765 [math.FA]

%
%

%

\bibitem{CPT} S. Chavan, D. Pradhan and S. Trivedi, Multishifts on directed Cartesian product of rooted directed trees,  {\it Dissertationes Mathematicae}, to appear.

\bibitem{CPT-1} S. Chavan, D. Pradhan and S. Trivedi, Dirichlet spaces associated with locally finite rooted directed trees, {\it
Integr. Eqn. Op. Th}, to appear. 

%
%

\bibitem{CT}
S. Chavan and S. Trivedi, An analytic model for left-invertible
weighted shifts on directed trees, {\it J. London Math. Soc.} {\bf 94} (2016), 253-279.

\bibitem{CY}
S. Chavan and D. Yakubovich, Spherical tuples of Hilbert space operators, 
{\it Indiana Univ. Math. J.} {\bf 64} (2015), 577-612.
%
%
%
%
%
%
\bibitem{Co} J. Conway, {\it The Theory of Subnormal Operators}, Math. Surveys Monographs, 36, Amer. Math. Soc. Providence, RI 1991.


\bibitem{C-Y} R. Curto and J. Yoon, 
Disintegration-of-measure techniques for commuting multivariable weighted shifts, {\it Proc. London Math. Soc.} {\bf 92} (2006), 381-402.
 
%
%
%
%
%
%
%
%
%
%
%
%
%
%
%
%
%
%

\bibitem{Dr} S. Drury, {A generalization of von Neumann's inequality
to the complex ball}, {\it Proc. Amer. Math. Soc.} \textbf{68} (1978), 300-304.


%
%
%
%
%
\bibitem{GR} J. Gleason and S. Richter, {m-Isometric commuting tuples of operators on a Hilbert space},
{\it Integr. Eqn. Op. Th.} {\bf 56} (2006), 181-196.

%
%
%

\bibitem{H} B. Hall, {\it Lie Groups, Lie Algebras, and Representations, An Elementary Introduction}. Second edition. Graduate Texts in Mathematics, 222, Springer, Cham, 2015.

%
%
%
%
%
\bibitem{JJS}
Z. Jab{\l}o\'nski, Il Bong Jung, and J. Stochel, Weighted shifts on directed
trees, {\it  Mem. Amer. Math. Soc.} {\bf 216} (2012), {no 1017}, viii+106.


%
%
%
%
%
\bibitem{JL}
N. P. Jewell and A. R. Lubin, Commuting weighted shifts and analytic function theory in several variables, {\it J. Operator Theory}, {\bf 1} (1979), 207-223. 

%
%
%
\bibitem{KK} E. Katsoulis and D. Kribs, 
Isomorphisms of algebras associated with directed graphs,
{\it Math. Ann.} {\bf 330} (2004), 709-728.
 
%
%
%
%
%
%
%
%
%
%
%
%
%
%
%
%



\bibitem{LC} W. Light and E. Cheney, 
{\it Approximation Theory in Tensor Product Spaces},
Lecture Notes in Mathematics, 1169. Springer-Verlag, Berlin, 1985. 

\bibitem{N} Y. Neretin, Groups of hierarchomorphisms of trees and related Hilbert spaces, {\it J. Funct. Anal.} {\bf 200} (2003), 505-535. 

\bibitem{PR}  V. Paulsen and M. Raghupathi, {\it An Introduction to the Theory of Reproducing Kernel Hilbert Spaces}, Cambridge Studies in Advanced Mathematics, 152. Cambridge University Press, Cambridge, 2016.

\bibitem{Sa} J. Sarkar, {\it Applications of Hilbert Module Approach to Multivariable Operator Theory} (Survey article) Handbook of Operator Theory, Springer (2015), 1035-1091 (Edited by D. Alpay). 

\bibitem{Sh}
S. Shimorin, Wold-type decompositions and wandering subspaces for operators
close to isometries, {\it J. Reine Angew. Math.} {\bf 531} (2001), 147-189.

\bibitem{Si}  B. Simon, {\it Real analysis. A Comprehensive Course in Analysis}, Part 1. American Mathematical Society, Providence, RI, 2015.

\bibitem{S} B. Solel, 
You can see the arrows in a quiver operator algebra, 
{\it J. Aust. Math. Soc.} {\bf 77} (2004), 111-122.
 
%
%
%
%
%

\bibitem{Z}  K. Zhu, {\it Spaces of Holomorphic Functions in the Unit Ball}, Graduate Texts in Mathematics, 226. Springer-Verlag, New York, 2005.
\end{thebibliography}
\end{document}